\newtheorem{theorem}{Theorem}
\newtheorem{lemma}[theorem]{Lemma}
\newtheorem{corollary}[theorem]{Corollary}
\newtheorem{proposition}[theorem]{Proposition}
\theoremstyle{definition}
\newtheorem{definition}[theorem]{Definition}
\theoremstyle{remark}
\newtheorem*{remark}{Remark}
\newcommand{\RR}{\mathbb{R}}
\newcommand{\ZZ}{\mathbb{Z}}
\newcommand{\pd}[2][1]{\deriv{\partial}{#1}{#2}}
\newcommand{\deriv}[3]{\ensuremath{\frac{#1 #2}{#1 #3}}}
\newcommand{\abs}[1]{\ensuremath{\left|#1\right|}}
\newcommand{\E}{\mathrm{e}}
\newcommand{\CSF}{\textsc{csf}}
\newcommand{\curv}{\kappa}
\newcommand{\nor}{\textsf{N}}
\newcommand{\tang}{\textsf{T}}
\newcommand{\rr}{\frak r}
\newcommand{\eps}{\varepsilon}
\begin{document}

\title[Isoperimetric profile comparison for curve shortening flow]{A comparison theorem for the isoperimetric profile under curve shortening flow}
\thanks{Partially supported by Discovery Grant DP0985802 of the Australian Research Council.}

\author{Ben Andrews}

\address{Mathematical Sciences Institute,  Australian National University;
		Mathematical Sciences Center, Tsinghua University; and
		Morningside Center of Mathematics, Chinese Academy of Sciences.}
              
              \email{Ben.Andrews@anu.edu.au}           
\author{Paul Bryan}
\address{Mathematical Sciences Institute, Australian National University}
\email{Paul.Bryan@anu.edu.au}

\date{}

\begin{abstract}
We prove a comparison theorem for the isoperimetric profiles of simple closed curves evolving by the normalized curve shortening flow:  If the isoperimetric profile of the region enclosed by the initial curve is greater than that of some `model' convex region with exactly four vertices and with reflection symmetry in both axes, then the inequality remains true for the isoperimetric profiles of the evolved regions.    We apply this using the Angenent solution as the model region to deduce sharp time-dependent upper bounds on curvature for arbitrary embedded closed curves evolving by the normalized curve shortening flow.  A slightly different comparison also gives lower bounds on curvature, and the result is a simple and direct proof of Grayson's theorem without use of any blowup or compactness arguments, Harnack estimates, or classification of self-similar solutions.
\end{abstract}

\keywords{Curve shortening flow, Isoperimetric profile, curvature}
\subjclass[2010]{53C44, 35K55, 58J35}
\maketitle

\section{Introduction}
The curve shortening flow (\CSF) produces a smooth family of curves $\tilde\gamma_t=\tilde X(S^1,t)$ in the plane $\RR^2$,  from an initial curve $\tilde\gamma_0$ given by an immersion $\tilde X_0:\ S^1\to\RR^2$, according to the equation
\begin{equation}\label{eq:CSF}
\frac{\partial \tilde X}{\partial \tau} = -\tilde{\curv}\nor=\frac{1}{\abs{\tilde X'}}\left(\frac{\tilde X'}{\abs{\tilde X'}}\right)'
\end{equation}
where $\tilde{\curv}$ is the curvature of the curve $\tilde\gamma_\tau$, $\nor$ is the outward unit normal, and primes denote derivatives with respect to a local parameter on $S^1$.  This system has received considerable study, and in particular it is known that for any smooth immersion $X_0$ there exists a unique solution on a finite maximal time interval, and that the maximum curvature becomes unbounded as the maximal time is approached \cite{GH}.  Gage \cites{Ga1,Ga2} and Gage and Hamilton \cite{GH} considered the case of convex embedded closed curves, and proved that solutions are asymptotic to shrinking circles as the final time is approached.  Grayson \cite{Gr} then extended this result to arbitrary embedded closed curves.  Our aim in this paper is to provide an estimate on the curvature for embedded closed curves evolving by curve shortening flow, and deduce from this a simple proof of Grayson's theorem.

In recent work \cite{AB1} we used isoperimetric estimates to deduce curvature bounds for embedded solutions of curve shortening flow.  The result was obtained by controlling the lengths of chords to the evolving curves, as a function of the arc length between the end points and elapsed time, extending an idea introduced by Huisken \cite{HuCSF}.  In particular sufficiently strong control of chord length for short segments implies a curvature bound, strong enough to provide a rather simple proof of Grayson's theorem.  Our argument showed that chord lengths can be bounded from below by a function $f(\ell,t)$ of arc length $\ell$ and elapsed time $t$, provided $f$ satisfies a certain differential inequality.  We then produced an explicit solution of this inequality which we discovered purely by accident, and for which we have no simple motivation.

Subsequently \cite{AB2} we used similar ideas to give sharp curvature estimates for the normalized Ricci flow on the two-sphere.  As before, the key motivating idea is that sufficiently strong control on an isoperimetric profile implies control on curvatures, but in this case we no longer relied on a purely serendipitous calculation:  The solutions of the differential equality in that case are in direct correspondence to axially symmetric solutions of the normalized Ricci flow itself, and in particular explicit solutions could be constructed from an explicit solution of Ricci flow known as the Rosenau solution or `sausage model'.

In this paper we show that the same situation arises in curve shortening flow when one estimates the isoperimetric profile of the enclosed region (related estimates were used by Hamilton to rule out slowly forming singularities \cite{HamCSF}).  As in the Ricci flow case, we deduce a comparison result from any solution of a certain differential inequality, and solutions of the corresponding equality are in direct correspondence with symmetric solutions of the curve shortening flow.  Using the explicit symmetric solution constructed by Angenent (known as the `paperclip' solution, we deduce a very strong upper bound on curvature for an arbitrary embedded solution of curve shortening flow.

A new ingredient which arises here is that the isoperimetric estimate does not imply lower bounds on the curvature $\curv$ (in contrast to the result in \cite{AB1} where a bound on $\curv^2-1$ is deduced for normalized solutions).  However we can deduce a suitable lower bound on $\curv$ by estimating the isoperimetric profile of the exterior region, and indeed the lower bounds we obtain (produced by comparison with a self-similar expanding solution) have some similarity to those which arise in Ricci flow.

\section{Notation and preliminary results}
To set our conventions, we routinely parametrize simple closed curves in the anticlockwise direction with outward-pointing normal, which means the Serret-Frenet equations take the form
\begin{align*}
X' &=|X'|\tang;\\
\tang'&=-\curv|X'|\nor;\\
\nor' &= \curv |X'|\tang.
\end{align*}

Our result is most easily formulated in terms of a normalized version of the curve-shortening flow, which we now introduce:  Given a solution $\tilde X$ of \eqref{eq:CSF}, we define $X:\ S^1\times[0,T)\to\RR^2$ by
$$
X(p,t)=\sqrt{\frac{\pi}{A[\tilde\gamma_\tau]}}\tilde X(p,\tau),
$$
where $A[\tilde\gamma_\tau]$ is the area enclosed by the curve $\tilde\gamma_\tau$, and
$$
t = \int_0^\tau \frac{\pi}{A[\tilde\gamma_{\tau'}]}\,d\tau',\qquad\text{and}\qquad
T=\int_0^{\tilde T}\frac{\pi}{A[\tilde\gamma_{\tau'}]}\,d\tau'.
$$
Then the rescaled curve $\gamma_t=X(S^1,t)$ has $A[\gamma_t]=\pi$ for every $t$, and $X$ evolves according to the normalized equation
\begin{equation}
  \label{eq:NCSF}
  \pd[X]{t} = X - \curv \nor = X+\frac{1}{|X'|}\left(\frac{X'}{|X'|}\right)'
\end{equation}
where $\curv$ denotes the curvature of $\gamma_t$.  Our main result controls the behaviour of solutions of \eqref{eq:NCSF} via their isoperimetric profiles, which we now discuss.

Let $\Omega$ be an open subset of $\RR^2$ of area $A$ (possibly infinite) with smooth boundary curve $\gamma$.   The \emph{isoperimetric profile} of $\Omega$ is the function $\Psi:\ (0,A)\to\RR_+$ defined by
\begin{equation}\label{eq:IP}
\Psi(\Omega,a) = \inf\left\{\abs{\partial_{\Omega}K}:\ K\subseteq\Omega,\ \abs{K}=a\right\}.
\end{equation}
Here $\partial_\Omega K$ denotes the boundary of $K$ as a subset of $\Omega$, which is given by the part of the boundary of $K$ as a subset of $\RR^2$ which is not contained in $\gamma$.  If $\partial\Omega$ is compact, then
for each $a\in(0,A)$, equality in the infimum is attained for some $K\subseteq\Omega$, so that we have $\abs{K}=a$ and $\abs{\partial_{\Omega}K}=\Psi(a)$, and in this case $\partial_{\Omega}K$ consists of circular arcs of some fixed radius meeting $\gamma$ orthogonally.

Later in the paper we will also consider the \emph{exterior isoperimetric profile} $\Psi_\text{ext}(\Omega,.)$, which is simply the isoperimetric profile of the exterior of $\Omega$:  $\Psi_\text{ext}(\Omega,a)=\Psi(\RR^2\setminus\bar\Omega,a)$.  If $\Omega$ is compact with smooth boundary, the exterior isoperimetric profile is defined on $[0,\infty)$, and for each $a>0$ there is some region $K$ in the exterior of $\Omega$ which attains the isoperimetric profile in the sense that $|K|=a$ and $|\partial_{\RR^2\setminus\bar\Omega}K|=\Psi_{\text{ext}}(\Omega,a)$.

\begin{proposition}\label{prop:asympt.isoperim.profile}
For any smoothly bounded domain $\Omega$ of area $\pi$, we have
$$
\lim_{a\to 0}\frac{\Psi(\Omega,a)-\sqrt{2\pi a}}{a}=-\frac{4\sup_{\partial\Omega}\curv}{3\pi};
\quad
\lim_{a\to 0}\frac{\Psi_{\text{ext}}(\Omega,a)-\sqrt{2\pi a}}{a}=\frac{4\inf_{\partial\Omega}\curv}{3\pi}.
$$
\end{proposition}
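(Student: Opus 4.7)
My plan is to prove both limits by an explicit local analysis at a point of extremal curvature on $\partial\Omega$, combined with the standard characterization of isoperimetric minimizers as regions bounded by circular arcs meeting $\partial\Omega$ orthogonally (as quoted after \eqref{eq:IP}). The upper bound on $\Psi$ (yielding an upper bound on the limit) comes from constructing explicit test regions at a point of maximum $\curv$; the matching lower bound comes from analyzing the actual minimizer for small $a$.

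For the upper bound, fix $p\in\partial\Omega$ with curvature $\curv=\curv(p)$, and set up coordinates at $p$ so that $\partial\Omega$ is locally the graph $y=-\curv x^2/2+O(x^3)$ with the interior of $\Omega$ lying below. Consider the test region $K_r$ bounded above by $\partial\Omega$ and below by a circular arc of radius $r$ symmetric about the $y$-axis. The orthogonality condition at the two meeting points forces the arc's center to sit at height $c=\curv x_0^2/2+O(x_0^3)$, where $x_0$ is the half-width and satisfies $x_0=r+O(r^3)$. A direct expansion then yields
$$
|K_r|=\frac{\pi r^2}{2}-\frac{4\curv}{3}r^3+O(r^4),\qquad |\partial_\Omega K_r|=\pi r-2\curv r^2+O(r^3).
$$
Eliminating $r$ in favor of $a=|K_r|$ gives $|\partial_\Omega K_r|=\sqrt{2\pi a}-\frac{4\curv}{3\pi}\,a+o(a)$, and choosing $p$ to maximize $\curv$ produces the upper bound $\Psi(\Omega,a)\le \sqrt{2\pi a}-\frac{4\sup_{\partial\Omega}\curv}{3\pi}\,a+o(a)$.

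For the matching lower bound, let $K_a$ realize the infimum in \eqref{eq:IP}, so that $\partial_\Omega K_a$ is a circular arc of some radius $r_a$ meeting $\partial\Omega$ orthogonally. As $a\to 0$ a compactness argument forces $r_a\to 0$ and the two meeting points to concentrate at a single $p_a\in\partial\Omega$; applying the local expansion above at $p_a$ (whose curvature is at most $\sup_{\partial\Omega}\curv$) gives the reverse inequality. The exterior limit follows from the same analysis applied to $\RR^2\setminus\bar\Omega$: the outward normal to the exterior is the inward normal to $\Omega$, so the curvature appearing in the expansion is $-\curv$, and the formula becomes $\sqrt{2\pi a}+\frac{4\inf_{\partial\Omega}\curv}{3\pi}\,a+o(a)$ via $-\sup(-\curv)=\inf\curv$. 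The main technical obstacle is justifying that the minimizer concentrates at a single boundary point as $a\to 0$ and that the local expansion holds uniformly in the base point; both are standard consequences of small-volume isoperimetric regularity in smooth domains.
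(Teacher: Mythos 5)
Your proposal is correct, and the constants do work out: with $x_0=r+O(r^3)$ and $c=\curv x_0^2/2+O(x_0^3)$ one finds $|K_r|=\tfrac{\pi r^2}{2}-\tfrac{4\curv}{3}r^3+O(r^4)$ and $|\partial_\Omega K_r|=\pi r-2\curv r^2+O(r^3)$, and inverting $a=|K_r|$ gives $r=\sqrt{2a/\pi}+\tfrac{4\curv}{3\pi}\cdot\tfrac{2a}{\pi}+O(a^{3/2})$, whence $|\partial_\Omega K_r|=\sqrt{2\pi a}+\left(\tfrac{8}{3\pi}-\tfrac{4}{\pi}\right)\curv a+O(a^{3/2})=\sqrt{2\pi a}-\tfrac{4\curv a}{3\pi}+O(a^{3/2})$. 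However, your route differs from the paper's. You compute the expansion directly in a Taylor-expanded graph coordinate at a boundary point and, for the lower bound, invoke the regularity of small-volume minimizers (that $\partial_\Omega K$ is a single short circular arc meeting $\partial\Omega$ orthogonally and concentrating at a point). The paper instead first computes $\Psi$ and $\Psi_{\text{ext}}$ explicitly for the unit disk (and, by scaling, for $B_r$ and the half-plane), then introduces a Fermi-coordinate map $Y(u,s)=X(u)-s\nor(u)$ and a local diffeomorphism $\chi$ onto the model region $\Omega_{\curv(\bar u)}\in\{$half-plane, disk, complement of disk$\}$ satisfying $g(1-Cd^2)\le\chi_*g\le g(1+Cd^2)$; the estimate is then transferred in both directions through this near-isometry, using only the closed-form model profiles. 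The two approaches need the same nontrivial input for the lower bound --- that for small $a$ the minimizer is contained in a small ball about a boundary point (the paper gets this from the preliminary estimate $\Psi(\Omega,a)\le\sqrt{2\pi a}+O(a)$, you flag it as ``standard'') --- but the paper's comparison via $\chi$ only needs this containment plus the area-and-length distortion bounds, while your version additionally uses the constant-curvature, orthogonal-meeting structure of the minimizer's free boundary. Your computation is shorter and more hands-on; the paper's version avoids any appeal to the shape of the minimizing arc and makes the error bounds uniform almost for free via the metric comparison. Both are sound, and your sign discussion for the exterior profile ($\tilde\curv=-\curv$ giving $+\tfrac{4\inf\curv}{3\pi}a$) is correct.
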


\begin{proof}
In the case $\Omega=B_1(0)$ we can check this result explicitly, since the isoperimetric regions are precisely the disks and half-spaces which intersect $B_1(0)$ orthogonally, so that the isoperimetric profile is given implicitly by 
$$
a=\theta-\tan\theta+(\pi/2-\theta)\tan^2\theta\quad\text{and}\quad
\Psi(B_1(0),a)=(\pi-2\theta)\tan\theta,
$$
from which the asymptotic result $\Psi(B_1(0),a)=\sqrt{2\pi a}-\frac{4a}{3\pi}+O(a^{3/2})$ follows.  

The exterior isoperimetric profile can be computed similarly:  In this case the isoperimetric regions are the intersections with $\RR^2\setminus B_1(0)$ of disks which meet the boundary orthogonally, so the exterior isoperimetric profile is defined implicitly by the identities
$$
a=\tan\theta-\theta+(\pi/2+\theta)\tan^2\theta\quad\text{and}\quad
\Psi_{\text{ext}}(B_1(0),a)=(\pi-2\theta)\tan\theta.
$$

By scaling, we have also that the isoperimetric profiles for a ball of radius $r$ are given by
\begin{align*}
\Psi(B_r(0),a) &= r\Psi(B_1(0),a/r^2) = \sqrt{2\pi a}-\frac{4a}{3\pi r}+O(a^{3/2});\\
\Psi_{\text{ext}}(B_r(0),a) &= r\Psi_{\text{ext}}(B_1(0),a/r^2) = \sqrt{2\pi a}+\frac{4a}{3\pi r}+O(a^{3/2}).
\end{align*}
We also note the isoperimetric profile of a half-space: $\Psi(\{x>0\},a)=\sqrt{2\pi a}$.

\begin{figure}
\includegraphics[scale=0.60]{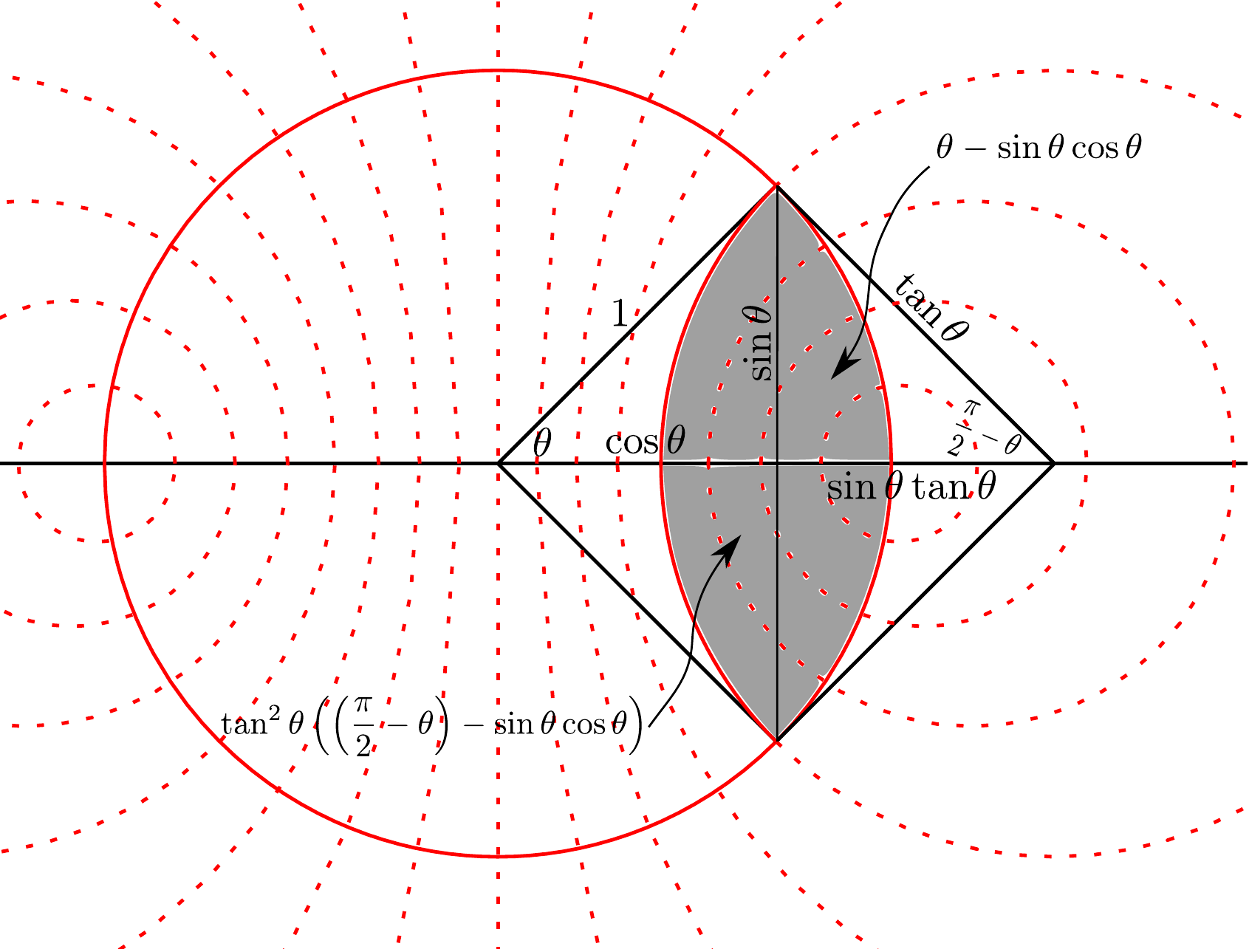}
\caption{Isoperimetric regions of the unit disk}\label{fig:circle_isoperim}
\end{figure}

In the general case,
we begin by proving $\Psi(\Omega,a)\leq\sqrt{2\pi a}+O(a)$:
Let $p\in\partial\Omega$, and set $K_r=B_r(p)\cap\Omega$.
A direct computation gives
$$
|\partial_{\Omega}K_r| = \pi r+O(r^2)
$$
while
$$
|K_r|=\frac{\pi}{2}r^2+O(r^3)
$$
as $r\to 0$.  Setting $a=|K_r|$ and rearranging, we find
\begin{equation}\label{eq:first.isoper.est}
\Psi(\Omega,a)\leq |\partial_{\Omega}K_r| = \sqrt{2\pi a}+O(a)
\end{equation}
as $a\to 0$.  

Now we prove the stronger result:   Let $X:\ \RR\to\partial\Omega$ be a unit speed counterclockwise parametrization of the boundary, and define $Y:(\RR/L\ZZ)\times[0,\delta)\to\Omega$ by $Y(u,s) = X(u)-s\nor(u)$.
For small $\delta>0$ this map parametrizes a neighbourhood of the boundary, with induced metric given by 
\begin{equation}\label{eq:metric}
g(\partial_s,\partial_s)=1,\quad g(\partial_s,\partial_u)=0,\quad g(\partial_u,\partial_u) = (1-s\curv(u))^2.
\end{equation}
For $\curv\in\RR$ we define a `model' region $\Omega_\curv$ with the origin in its boundary:
$$
\Omega_\curv=\begin{cases}
\{(x,y):\ x\leq 0\},&\curv=0;\\
B_{\curv^{-1}}(-\curv^{-1},0),&\curv>0;\\
\RR^2\setminus(B_{|\curv|^{-1}}(|\curv|^{-1},0)),&\curv<0.
\end{cases}
$$
For any $\bar u\in\RR$, we can construct a local diffeomorphism $\chi$ from a neighbourhood of $X(u_0)$ in $\Omega$ to a neighbourhood of the origin in $\Omega_{\curv}$, as follows:
$$
\chi(Y(u,s)) = \begin{cases}
-s+(u-\bar u)i,&\curv(\bar u)=0;\\
(\curv(\bar u)^{-1}-s)\E^{i\curv(\bar u)(u-\bar u)}-\curv(\bar u)^{-1},&\curv(\bar u)\neq 0;\\
\end{cases}
$$
We see from \eqref{eq:metric} that $\chi$ is nearly an isometry, in the sense that there exists $r>0$ such that $\chi$ maps $B_r(X(u_0))\cap\Omega$ to a neighbourhood $U$ of the origin in $\Omega_\curv$ in such a way that $g(1-Cd^2)\leq \chi_*g\leq g(1+Cd^2)$, where $d$ is the distance to $X(u_0)$ (comparable to $s+|u-\bar u|$) and $g$ is the standard metric on $\RR^2$.
We prove an upper bound on the isoperimetric profile as follows:  For $a$ sufficiently small, we can find an isoperimetric domain $K$ for $\Omega_{\curv}$ contained in $U$ such that $\chi^{-1}(K)$ has area $a$ (hence $K$ has area at least $a(1-Ca)$).  But then we have
\begin{align*}
\Psi(\Omega,a)
&\leq |\partial_\Omega\chi^{-1}(K)|_g\\
&=|\partial_{\Omega_{\curv}}K|_{\chi_*g}\\
&=\Psi(\Omega_{\curv},|K|)\\
&\leq\sqrt{2\pi|K|}-\frac{4\curv|K|}{3\pi}+C|K|^{3/2}\\
&\leq\sqrt{2\pi a}-\frac{4\curv a}{3\pi}+\tilde Ca^{3/2}
\end{align*}
The reverse inequality is proved similarly:  By the estimate \eqref{eq:first.isoper.est}, for $a$ small the isoperimetric domain $K$ for $\Omega$ of area $a$ is contained in the domain of the map $\chi$ centred at some point $X(u_0)$.
Then we have 
\begin{align*}
\Psi(\Omega,a)
&= |\partial_\Omega K|_g\\
&=|\partial_{\Omega_\curv}\chi(K)|_{\chi^{-1}_*g}\\
&\geq |\partial_{\Omega_\curv}\chi(K)|_g(1-Ca)\\
&\geq (1-Ca)\Psi(\Omega_\curv,|\chi(K)|)\\
&\geq (1-Ca)\left(\sqrt{2\pi|\chi(K)|}-\frac{4\curv|\chi(K)|}{3\pi}-C|\chi(K)|^{3/2}\right)\\
&\geq \sqrt{2\pi a}-\frac{4\curv a}{3\pi}-\tilde Ca^{3/2},
\end{align*}
where we used $|\chi(K)|_g=|K|_{\chi_*g}\geq |K|_g(1-Ca)=a(1-Ca)$.
\end{proof}



\section{A comparison theorem for the isoperimetric profile}

In this section we show that the isoperimetric profile of a region evolving by \eqref{eq:NCSF} can be bounded below by any function satisfying a certain differential inequality, provided this is true at the initial time.  In the following section we will show how to construct such functions from particular solutions of the normalized curve shortening flow.  In order to state the main result of this section we first require the following definition:

\begin{definition}\label{def:F}
For $a,b\in\RR$, we define
\begin{align*}
{\mathcal F}[a,b] &= \inf\left\{\int_0^1\left|\frac{\partial\varphi}{\partial x}\right|^2\,dx - a^2\int_0^1\varphi^2\,dx-b\left(\int_0^1\varphi\,dx\right)^2:\right.\\
&\qquad\qquad\qquad\null\left. \varphi\in C^\infty([0,1]),\ \varphi(0)=\varphi(1)=1\phantom{\int_0^1 \eta\,du}\qquad\right\}.
\end{align*}
\end{definition}

A direct computation shows that
\begin{equation}\label{eq:expr.F}
\frac{1}{{\mathcal F}[a,b]}=\min\left\{\frac{\cos(a/2)}{2a\sin(a/2)}-\frac{1}{a^2}+\frac{1}{a^2+b},0\right\},
\end{equation}
where this should be interepreted as a suitable limit in the case $a=0$.
In particular, in the region where ${\mathcal F}[a,b]$ is positive, it is a smooth function of $a$ and $b$ which is strictly decreasing in $b$.

\begin{theorem}\label{thm:ODEcomparison}
Let $f:\ [0,\pi]\times[0,\infty)\to\RR$ be continuous, smooth on $(0,\pi)\times(0,\infty)$, concave in the first argument for each $t$, and symmetric (so that $f(z,t)=f(\pi-z,t)$ for all $z,t$).  Assume that $\limsup_{z\to 0}\frac{f(z,t)}{\sqrt{2\pi z}}<1$ and
$$
\frac{\partial f}{\partial t}<-f^{-1}{\mathcal F}[ff',f^3f'']+f+f'(\pi-2a)-f(f')^2
$$
for all $a\in(0,\pi)$ and $t> 0$.
Suppose $\gamma_t=\partial\Omega_t$ is a family of smooth embedded curves evolving by \eqref{eq:NCSF}  and satisfying $\Psi(\Omega_0,a)>f(a,0)$ for all $a\in(0,\pi)$, then $\Psi(\Omega_t,a)>f(a,t)$ for all $t\geq 0$ and $a\in(0,\pi)$.
\end{theorem}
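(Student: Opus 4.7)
My plan is to argue by contradiction using a first touching time combined with a variational analysis at the touching isoperimetric region. Let $Z(a,t) := \Psi(\Omega_t, a) - f(a,t)$, continuous on $(0,\pi) \times [0,\infty)$; the hypothesis gives $Z(\cdot, 0) > 0$. I first rule out touching at the endpoints $a \in \{0, \pi\}$: Proposition~\ref{prop:asympt.isoperim.profile} combined with $\limsup_{z \to 0} f(z,t)/\sqrt{2\pi z} < 1$ and a uniform bound on $\sup_{\partial \Omega_t}\curv$ over compact time intervals (from smoothness of \eqref{eq:NCSF}) forces $Z > 0$ near $a = 0$; the symmetry $f(z,t) = f(\pi - z, t)$ together with $\Psi(\Omega_t, a) = \Psi(\Omega_t, \pi - a)$ handles $a = \pi$. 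If the conclusion fails, there is therefore a first time $t_0 > 0$ and some $a_0 \in (0,\pi)$ with $\Psi(\Omega_{t_0}, a_0) = f(a_0, t_0)$ while $Z(\cdot, t) > 0$ for all $t < t_0$.

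At this touching point, compactness of $\gamma_{t_0}$ guarantees that the infimum is attained by some $K_0 \subset \Omega_{t_0}$ whose interior boundary $\partial_{\Omega_{t_0}} K_0$ is a finite union of circular arcs of common curvature $\curv_0$ meeting $\gamma_{t_0}$ orthogonally. The concavity of $f$ in $a$ can be used to reduce to the case of a single arc (a multi-arc minimiser with subregion areas summing to $a_0$ yields a competitor beating $f(a_0, t_0)$ via concavity). I then construct a smooth comparison family $K_t \subset \Omega_t$ for $t$ near $t_0$ by writing $\partial_{\Omega_t} K_t$ as a normal graph of height $\eta(\cdot, t)$ over $\partial_{\Omega_{t_0}} K_0$ with endpoints slid along $\gamma_t$, parametrised by the initial velocity $\varphi := \partial_t \eta|_{t_0}$ on the arc. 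The endpoint values of $\varphi$ are determined by the NCSF motion of $\gamma_t$, while the interior values of $\varphi$ are free.

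Using $\Psi(\Omega_t, |K_t|) \leq |\partial_{\Omega_t} K_t|$ combined with the touching hypothesis $\Psi \geq f$ (equality at $(a_0, t_0)$), the function $h(t) := |\partial_{\Omega_t} K_t| - f(|K_t|, t)$ is $\geq 0$ for $t \leq t_0$ and vanishes at $t_0$, so its left time-derivative satisfies $h'(t_0^-) \leq 0$. The identification $\curv_0 = f'(a_0, t_0)$ (from differentiating the isoperimetric profile along the family of minimisers) makes the coefficient of $\int \varphi$ in $h'(t_0)$ vanish, so the leading $\varphi$-dependent content is second-order and one instead uses $h''(t_0^-) \geq 0$. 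Computing $h''(t_0)$ via $\partial_t X = X - \curv \nor$ together with the first and second variations of length and area, and combining with the area constraint $|\Omega_t| = \pi$ which forces the complementary boundary arc on $\gamma_{t_0}$ to have length $\pi - 2a_0$, yields an inequality of the form
\begin{equation*}
Q(\varphi) + f + f'(\pi - 2a_0) - f(f')^2 \;\leq\; f_t(a_0, t_0),
\end{equation*}
where $Q(\varphi)$ is a quadratic form in $\varphi$ with fixed boundary data. After rescaling the arc to unit length and normalising the boundary values of $\varphi$ to $1$, $Q$ matches the functional of Definition~\ref{def:F} with $a = ff'$ and $b = f^3 f''$; taking the infimum of $Q$ over admissible $\varphi$ replaces $Q$ by $-f^{-1}\mathcal{F}[ff', f^3 f'']$, directly contradicting the strict inequality hypothesised on $f$.

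The main obstacle is the second-order computation: carefully disentangling the first-order cancellations, accounting for the boundary contributions as the arc endpoints slide along $\gamma_t$ under \eqref{eq:NCSF}, identifying $f''(a_0, t_0)$ with the second-variation coefficient arising from the geodesic curvature of $\gamma_{t_0}$ at the endpoints via differentiation of the isoperimetric profile along the family of minimisers, and matching the resulting quadratic form on $\varphi$ term by term with $\mathcal{F}[ff', f^3 f'']$ under the specific rescaling by $f$. The reduction to a connected minimiser via concavity of $f$ is a subsidiary but necessary technical step.
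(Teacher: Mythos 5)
Your proposal gets the broad architecture right — first touching time, ruling out the endpoints via Proposition~\ref{prop:asympt.isoperim.profile}, reduction to a single arc via concavity of $f$, identification $\curv_\sigma=f'$ from a first variation, and the appearance of $\mathcal{F}$ from a minimization over admissible $\varphi$. But there is a genuine logical gap at the central step ``one instead uses $h''(t_0^-)\geq 0$.'' From $h(t)\geq 0$ for $t\leq t_0$ and $h(t_0)=0$ you only obtain $h'(t_0^-)\leq 0$; the second-derivative inequality $h''(t_0^-)\geq 0$ requires $h'(t_0^-)=0$, which you have not established and which in general is false. Once the $\varphi$-linear part of $h'(t_0)$ is killed by $\curv_\sigma=f'$, what remains is a $\varphi$-\emph{independent} constant, and ``$\leq 0$'' of that constant is itself a nontrivial inequality (it is precisely the paper's time-variation inequality); you cannot discard it and pass to $h''$. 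In particular the quadratic form $\mathcal F[ff',f^3f'']$ cannot be extracted from any second $t$-derivative of this kind.

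The correct structure requires \emph{two independent} one-parameter variations, not one. First, at the fixed time $t_0$, consider a purely spatial family $K_s\subset\Omega_{t_0}$ with $K_0=K$, where $g(s):=|\partial_{\Omega_{t_0}}K_s|-f(|K_s|,t_0)$ genuinely attains a minimum at $s=0$ (by the definition of the isoperimetric profile), so that $g'(0)=0$ gives $\curv_\sigma=f'$ and $g''(0)\geq 0$ gives, after the boundary-endpoint terms are accounted for and one normalizes $\varphi(0)=\varphi(1)=1$, the inequality $\curv(u_-)+\curv(u_+)\leq f^{-1}\mathcal F[ff',f^3f'']$. Second, an entirely separate temporal family $K_t$ for $t\leq t_0$ yields $h'(t_0^-)\leq 0$, which (using the normalized flow and the turning-tangents identity $\int_{u_-}^{u_+}\curv\,du=\pi-ff'$ to compute $\frac{d}{dt}|K_t|=2|K|+ff'-\pi$) produces $-f_t+f'(\pi-2|K|)+f-f(f')^2\leq \curv(u_-)+\curv(u_+)$. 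Chaining the two inequalities contradicts the strict PDE hypothesis. (Minor: your remark that $|\Omega_t|=\pi$ ``forces the complementary boundary arc on $\gamma_{t_0}$ to have length $\pi-2a_0$'' is a dimensional mismatch — $a_0$ is an area, not a length — and the $\pi-2a$ term enters via the area-rate computation above, not as a geometric length.)
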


\begin{proof}
We argue by contradiction:  If the inequality $\Psi(\Omega_t,a)>f(a,t)$ does not hold everywhere, then define $t_0=\inf\{t:\ \Psi(\Omega_t,a)\leq f(a,t)\text{\ for\ some\ }a\in(0,\pi)\}$.   Since  $\Psi(\Omega_t,a)$ is continuous in $a$ and $t$, and $\Psi(\Omega_t,a)>f(a,t)$ for $a$ sufficiently close to either $0$ or $\pi$, we have $\Psi(\Omega_{t},a)\geq f(a,t)$ for all $a\in[0,\pi]$ and $0\leq t\leq t_0$, and there exists $a_0\in(0,\pi)$ such that $\Psi(\Omega_{t_0},a_0)=f(a_0,t_0)$.  Let $K$ be an isoperimetric region in $\Omega_{t_0}$ of area $a_0$, so that $|\partial_{\Omega_{t_0}}K|=f(|K|,t_0)$.  

The concavity of $f$ has topological implications for $K$:

\begin{lemma}\label{lem:connecteddomains}
Let $f:\ (0,\pi)\to\RR$ be positive, strictly concave, and symmetric in the sense that $f(\pi-x)=f(x)$ for each $x$.  If $\Omega\subset\RR^2$ is a compact simply connected domain of area $\pi$ with $\Psi(\Omega, a)\geq f(a)$ for every $a$, then every region $K$ in $\Omega$ with $|\partial_\Omega K|=f(|K|)$ and $|K|\in(0,\pi)$ is connected and simply connected.
\end{lemma}

\begin{proof}
We first prove that $K$ is connected, by contradiction:  Suppose $K_1$ and $K_2$ are nonempty open subsets of $K$ with $K=K_1\cup K_2$, then we have
\begin{align*}
f(|K|)&=|\partial_\Omega K|\\
&=|\partial_\Omega K_1|+|\partial_\Omega K_2|\\
&\geq f(|K_1|)+f(|K_2|)\\
&> f(0)+f(|K_1|+|K_2|)\\
&\geq f(|K|),
\end{align*}
where the strict inequality follows from the strict concavity of $f$.
This is a contradiction, so $K$ is connected.

Since $|\partial_\Omega\left(\Omega\setminus \bar K\right)|=|\partial_\Omega K|=f(|K|)=f(\pi-|K|)=f(|\Omega\setminus \bar K|)$, the same argument implies that $\Omega\setminus\bar K$ is  connected. It follows that $\partial_\Omega K$ has only one component and that $K$ is simply connected.
\end{proof}

\begin{lemma}[First variation]\label{lem:first-var}
$\partial_{\Omega_{t_0}}K$ has constant curvature equal to $f'$.
\end{lemma}

\begin{proof}

\begin{figure}
\includegraphics[scale=0.8]{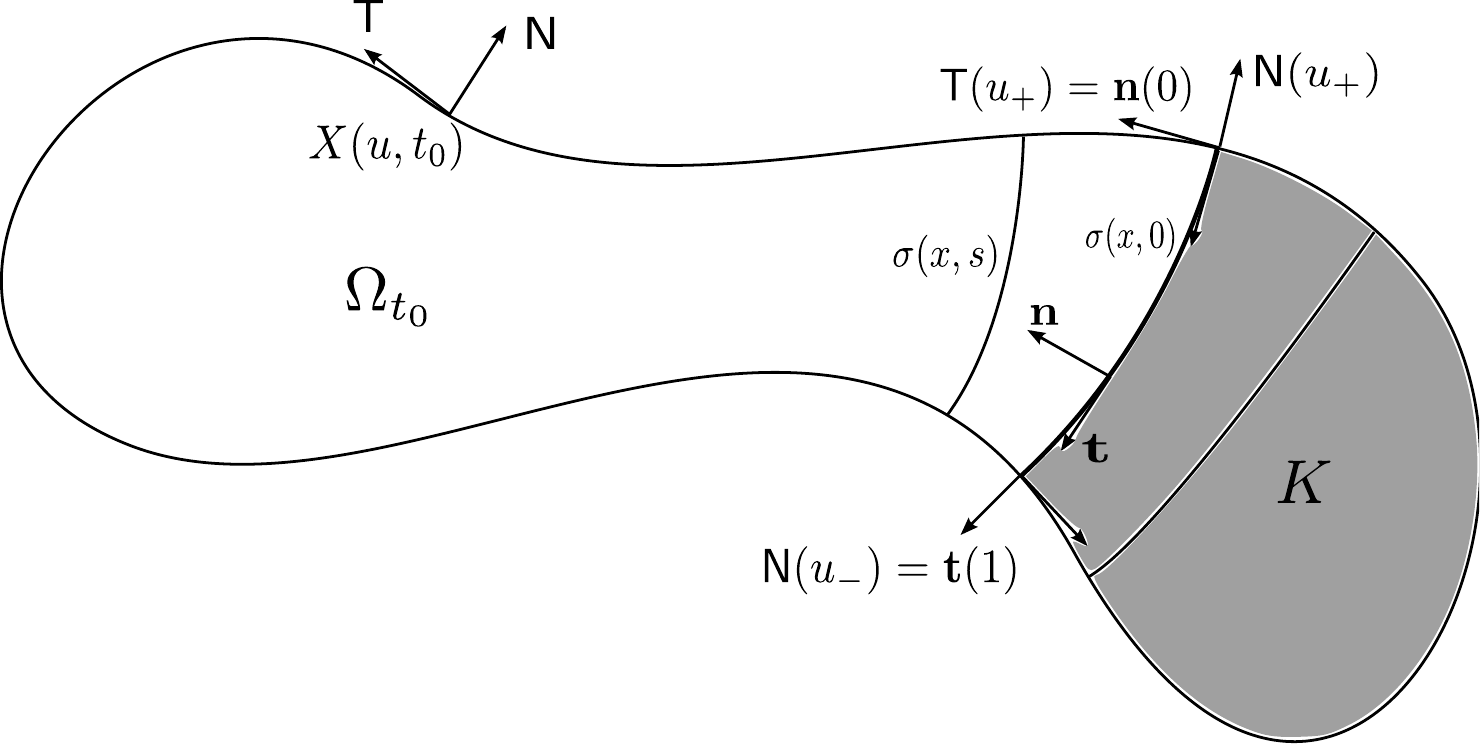}
\caption{A smooth variation of the domain $K$ in $\Omega_{t_0}$.}\label{fig:spatial.variation}
\end{figure}

Given any smooth function $\varphi:\ [0,1]\to\RR$, there exists a smooth variation $\sigma:\ [0,1]\times(-\delta,\delta)\to\Omega_{t_0}$ with $\sigma([0,1],0)=\partial_{\Omega_{t_0}}K$, $\sigma(0,s)=X(u_+(s),t_0)$ and $\sigma(1,s)=X(u_-(s),t_0)$, and such that $\frac{\partial\sigma}{\partial s}(x,0) = \varphi(x){\bf n}$, where ${\bf n}$ is the outward-pointing unit normal to $K$.  Write $\frac{\partial\sigma}{\partial s}=\eta {\bf n}+\xi{\bf t}$, where ${\bf t}=\sigma_x/|\sigma_x|$ is the unit tangent vector, and by assumption $\eta(x,0)=\varphi(x)$ and $\xi(x,0)=0$ for each $x\in[0,1]$.

Let $K_s$ be the region in $\Omega_{t_0}$ bounded by the curve $\sigma(.,s)$ for each $s\in(-\delta,\delta)$.  The area of $K_s$ is given by the following expression:
$$
|K_s|=\frac12\int_0^1 \sigma\times\frac{\partial\sigma}{\partial x}\,dx + \frac12\int_{u_-(s)}^{u_+(s)}X\times\frac{\partial X}{\partial u}\,du
$$
where $\sigma$ is evaluated at $(x,s)$ and $X$ at $(u,t_0)$.  We can assume that the parameter $u$ is chosen to be the arc-length parameter at time $t_0$, so that $\frac{\partial X}{\partial u}=\tang$ everywhere.  Differentiating with respect to $s$, we find:
\begin{align}
\frac{\partial}{\partial s}|K_s| &=\frac12 \int_0^1(\eta{\bf n}+\xi{\bf t})\times{\bf t}|\sigma_x|\,dx
+\frac12\int_0^1\sigma\times\frac{\partial}{\partial x}\left(\eta{\bf n}+\xi{\bf t}\right)\,dx\notag\\
&\quad\null + \frac12\dot u_+ X(u_+)\times\tang(u_+)-\frac12\dot u_- X(u_-)\times\tang(u_-)
\notag\\
&=\int_0^1\eta |\sigma_x|\,dx + \frac12\sigma\times(\eta{\bf n}+\xi{\bf t})\big|_{x=1}- \frac12\sigma\times(\eta{\bf n}+\xi{\bf t})\big|_{x=0}\notag\\
&\quad\null + \frac12\dot u_+ X(u_+)\times\tang(u_+)-\frac12\dot u_- X(u_-)\times\tang(u_-)
\notag\\
&=\int_0^1 \eta(x,s) |\sigma_x|\,dx.\label{eq:first.var.A}
\end{align}
Here dots denote derivatives with respect to $s$.  We integrated by parts and used the identity ${\bf n}\times{\bf t}=1$ to produce the second equality, and the last equality uses the following identities which are proved by differentiating the equations $\sigma(1)=X(u_-)$ and $\sigma(0)=X(u_+)$ with respect to $s$:
\begin{align*}
\left(\eta{\bf n}+\xi{\bf t}\right)\big|_{x=1} &= \frac{\partial\sigma}{\partial s}(1) = \frac{\partial}{\partial s}X(u_-(s)) = 
\dot u_-\tang(u_-);\\
\left(\eta{\bf n}+\xi{\bf t}\right)\big|_{x=0} &= \frac{\partial\sigma}{\partial s}(0) = \frac{\partial}{\partial s}X(u_+(s)) = 
\dot u_+\tang(u_+).
\end{align*}
Next we compute the rate of change of the length of $\sigma([0,1],s)=\partial_{\Omega_{t_0}}K_s$:
\begin{align}
\frac{\partial}{\partial s}|\partial_{\Omega_{t_0}}K_s|
&=\frac{\partial}{\partial s}\int_0^1|\sigma_x|\,dx\notag\\
&=\int_0^1{\bf t}\cdot \partial_x\left(\eta{\bf n}+\xi{\bf t}\right)\,dx\label{eq:firstvar.L.1}\\
&=\int_0^1\eta\curv_\sigma|\sigma_x|+\xi_x\,dx\notag\\
&=\int_0^1\eta\curv_\sigma|\sigma_x|\,dx + \xi\big|_0^1,\label{eq:firstvar.L.2}
\end{align}
where $\curv_\sigma$ is the curvature of $\sigma$.
At $s=0$ we have $\eta=\varphi$ and $\xi=0$, so
$$
\frac{\partial}{\partial s}\left(|\partial_{\Omega_{t_0}}K_s|-f(|K_s|,t_0)\right)\big|_{s=0}
= \int_0^1\varphi\left(\curv_\sigma-f'\right)|\sigma_x|\,dx
$$
Now we observe that $|\partial_{\Omega_{t_0}}K_s|\geq \Psi(\Omega_{t_0},|K_s|)\geq f(|K_s|,t_0)$ for each $s$, with equality for $s=0$.  Therefore the derivative with respect to $s$ vanishes when $s=0$ for any choice of $\varphi$, and it follows that $\curv_\sigma=f'$ at each point of $\sigma_0$.
\end{proof}

\begin{lemma}[Second variation inequality]\label{lem:second-var-ineq}
For any $\varphi:\ [0,1]\to\RR$, 
$$
\curv(u_-)\varphi(1)^2+\curv(u_+)\varphi(0)^2\leq \frac{1}{f}\int_0^1\varphi_x^2\,dx-f(f')^2\int_0^1
\varphi^2\,dx -f^2f''\left(\int_0^1\varphi\,dx\right)^2.
$$
In particular
\begin{equation}\label{eq:sec.var.F}
\curv(u_-)+\curv(u_+)\leq \frac{1}{f}{\mathcal F}(ff',f^3f'').
\end{equation}
\end{lemma}

\begin{proof}
We consider the variations from the proof of the previous lemma.  Differentiating equation \eqref{eq:first.var.A} we find
\begin{align*}
\frac{\partial^2}{\partial s^2}|K_s|\Big|_{s=0} &=\int_0^1\left(\dot\eta+\eta^2\curv_\sigma\right)|\sigma_x|+\eta\xi_x\,dx\\
&=\int_0^1\left(\dot\eta+\eta^2\curv_\sigma\right)|\sigma_x|\,dx.
\end{align*}
To compute the second derivative of the length $|\partial_{\Omega_{t_0}}K_s|$ it is convenient to differentiate equation \eqref{eq:firstvar.L.1}:
\begin{align*}
\frac{\partial^2}{\partial s^2}\left|\partial_{\Omega_{t_0}}K_s\right|&=\int_0^1\frac{\left|\partial_x\left(\eta{\bf n}+\xi{\bf t}\right)\right|^2}{|\sigma_x|}-\frac{\left|{\bf t}\cdot\partial_x\left(\eta{\bf n}+\xi{\bf t}\right)\right|^2}{|\sigma_x|}\,dx\\
&\quad\null+\int_0^1{\bf t}\cdot\partial_x\partial_s\left(\eta{\bf n}+\xi{\bf t}\right)\,dx\\
&=\int_0^1\frac{\left|{\bf n}\cdot\partial_x\left(\eta{\bf n}+\xi{\bf t}\right)\right|^2}{|\sigma_x|}\,dx+{\bf t}\cdot\partial_s\left(\eta{\bf n}+\xi{\bf t}\right)\Big|_0^1\\
&\quad\null + \int_0^1\curv_\sigma{\bf n}\cdot\partial_s\left(\eta{\bf n}+\xi{\bf t}\right)|\sigma_x|\,dx.
\end{align*}
To expand this further we need to compute $\frac{\partial{\bf t}}{\partial s}$:
\begin{align*}
\frac{\partial{\bf t}}{\partial s}&=\frac{\partial}{\partial s}\left(\frac{\sigma_x}{|\sigma_x|}\right)\\
&=\frac{\partial_x(\eta{\bf n}+\xi{\bf t})}{|\sigma_x|}-\frac{{\bf t}\cdot\partial_x(\eta{\bf n}+\xi{\bf t})}{|\sigma_x|}{\bf t}\\
&=\left(\frac{\eta_x}{|\sigma_x|}-k\xi\right){\bf n}.
\end{align*}
It follows that 
$$
\frac{\partial}{\partial s}{\bf n} = -\left(\frac{\eta_x}{|\sigma_x|}-k\xi\right){\bf t},
$$
and hence we have (since $\xi=0$ for $s=0$)
\begin{equation*}
\frac{\partial}{\partial s}\left(\eta{\bf n}+\xi{\bf t}\right)\big|_{s=0}
=\dot\eta{\bf n}+\left(\dot\xi-\frac{\eta\eta_x}{|\sigma_x|}\right){\bf t}.
\end{equation*}
Substituting this above, and using the result of Lemma \ref{lem:first-var}, we deduce:
\begin{equation*}
\frac{\partial^2}{\partial s^2}\left|\partial_{\Omega_{t_0}}K_s\right|\Big|_{s=0}
=\int_0^1\frac{(\partial_x\varphi)^2}{|\sigma_x|}\,dx + f'\int_0^1\dot\eta|\sigma_x|\,dx + \left(\dot\xi - \frac{\eta\eta_x}{|\sigma_x|}\right)\Big|_0^1.
\end{equation*}
Now we observe that differentiating the identity $X(u_+(s))=\sigma(0,s)$ twice with respect to $s$ yields
\begin{align*}
\left(\dot\eta{\bf n}+\left(\dot\xi-\frac{\eta\eta_x}{|\sigma_x|}\right){\bf t}\right)\Big|_{x=0}
&=\partial_s\left(\eta{\bf n}+\xi{\bf t}\right)\Big|_{x=0}\\ 
&= \frac{\partial^2}{\partial s^2}X(u_+)\\
&= \frac{\partial}{\partial s}\left(\dot u_+\tang(u_+)\right)\\
&= \ddot u_+\tang(u_+)-\left(\dot u_+\right)^2\curv(u_+)\nor(u_+).
\end{align*}
At $s=0$ we have $\nor(u_+)=-{\bf t}(0)$ and $\eta{\bf n}|_{x=0}=\dot u_+\tang(a_+)$, so
$$
\left(\dot\xi-\frac{\eta\eta_x}{|\sigma_x|}\right)\Big|_{x=0}=\varphi(0)^2\curv(u_+).
$$
Similarly, we have (since $\nor(u_-)={\bf t}(1)$ and $\eta{\bf n}|_{x=1}=\dot u_-\tang(u_-)$)
$$
\left(\dot\xi-\frac{\eta\eta_x}{|\sigma_x|}\right)\Big|_{x=1}=-\varphi(1)^2\curv(u_-).
$$
Thus the second variation for length becomes
\begin{equation*}
\frac{\partial^2}{\partial s^2}\left|\partial_{\Omega_{t_0}}K_s\right|\Big|_{s=0}
=\int_0^1\frac{(\partial_x\varphi)^2}{|\sigma_x|}\,dx + f'\int_0^1\dot\eta|\sigma_x|\,dx -\varphi(0)^2\curv(u_+)-\varphi(1)^2\curv(u_-).
\end{equation*}
Putting the second variations for length and area together, and choosing the parameter $x$ to be constant speed at $s=0$ (so that $|\sigma_x|=f$) we find
\begin{align*}
0&\leq \frac{\partial^2}{\partial s^2}\left(\left|\partial_{\Omega_{t_0}}K_s\right|-f(|K_s|,t_0)\right)\Big|_{s=0}\\
&=\frac{1}{f}\int_0^1\varphi_x^2\,dx-\varphi(0)^2\curv(u_+)-\varphi(1)^2\curv(u_-)\\
&\quad\null-f(f')^2\int_0^1\varphi^2\,dx-f^2f''\left(\int_0^1\varphi\,dx\right)^2.
\end{align*}
This completes the proof of Lemma \ref{lem:second-var-ineq}.
\end{proof}

\begin{lemma}[Time variation inequality]\label{lem:time-var-ineq}
$$
-\frac{\partial f}{\partial t}+f'(\pi-2|K|)+f-f(f')^2\leq \curv(u_-)+\curv(u_+),
$$
where $f'$ and $\frac{\partial f}{\partial t}$ are evaluated at $(|K|,t_0)$, and $f'$ denotes the derivative of $f$ with respect to the first argument.
\end{lemma}

\begin{proof}
Consider any smoothly varying family of regions $\{K_t\}$ for $t\leq t_0$ close to $t_0$, with $K_{t_0}=K$.  Describe the boundary curves by a smooth family of embeddings $\sigma:\ [0,1]\times(t_0-\delta,t_0]\to\RR^2$ with $\sigma(x,t)\in\Omega_t$, $\sigma(0,t)=X(u_+(t),t)$, and $\sigma(1,t)=X(u_-,t)$.  Note that such a family always exists.
Then we have $|\partial_{\Omega_t}K_t|-f(|K_t|,t)\geq 0$ for each $t\in[t_0-\delta,t_0]$, with equality at $t=t_0$.  It follows that $\partial_t\left(|\partial_{\Omega_t}K_t|-f(|K_t|,t)\right)\big|_{t=t_0}\leq 0$.  We compute
$$
|\partial_{\Omega_t}K_t|=\int_0^1|\sigma_x|\,dx,
$$
while 
$$
|K_t|=\frac12\int_0^1\sigma\times\sigma_x\,dx + \frac12\int_{u_-(t)}^{u_+(t)} X\times X_u\,du.
$$
Write $\partial_t\sigma = V+\sigma$.  For convenience we choose the parameter $u$ to be arc-length parametrisation for $t=t_0$.  Differentiating the first equation gives
\begin{align*}
\frac{\partial}{\partial t}|\partial_{\Omega_t}K_t|&=\int_0^1{\bf t}\cdot\partial_x(V+\sigma)\,dx\\
&=|\partial_{\Omega_t}K_t|+\int_0^1{\bf t}\cdot \partial_xV\,dx\\
&=|\partial_{\Omega_t}K_t|+{\bf t}\cdot V\big|_{0}^1+\int_0^1\curv_\sigma {\bf n}\cdot V|\sigma_x|\,dx.
\end{align*}
Since $\sigma(0,t)=X(u_+(t),t)$ and $\sigma(1,t)=X(u_-(t),t)$ for each $t$, we have
\begin{align}
\sigma(0)+V(0) &= X(u_+)-\curv(u_+)\nor(u_+)+\dot u_+\tang(u_+);\label{eq:V0}\\
\sigma(1)+V(1) &= X(u_-)-\curv(u_-)\nor(u_-)+\dot u_-\tang(u_-).\label{eq:V1}
\end{align}
The first terms on left and right cancel.  Since $\nor(u_+)=-{\bf t}(0)$ and $\nor(u_-)={\bf t}(1)$, we have $V(0)\cdot{\bf t}(0)=\curv(u_+)$ and $V(1)\cdot{\bf t}(1)=-\curv(u_-)$, and so
\begin{equation}\label{eq:dtL}
\frac{\partial}{\partial t}|\partial_{\Omega_t}K_t|\Big|_{t=t_0}=
|\partial_{\Omega_{t_0}}K|-\curv(u_-)-\curv(u_+)+f'\int_0^1V\cdot{\bf n}|\sigma_x|\,dx.
\end{equation}
Next we compute the rate of change of the area:
\begin{align*}
\frac{\partial}{\partial t}|K_t|\Big|_{t=t_0}
&=\frac{\partial}{\partial_t}\left(\frac12\int_0^1\sigma\times\sigma_x\,dx + \frac12\int_{u_-(t)}^{u_+(t)} X\times X_u\,du\right)\\
&=\frac12\int_0^1\left[(\sigma+V)\times\sigma_x+\sigma\times\partial_x(\sigma+V)\right]\,dx\\
&\quad\null+\frac12\int_{u_-}^{u_+}\left[(X-\curv\nor)\times X_u + X\times\partial_u\left(X-\curv\nor\right)\right]\,du\\
&\quad\null+ \dot u_+ X(u_+)\times \tang(u_+)-\dot u_-X(u_-)\times\tang(u_-)\\
&=2|K|+\int_0^1V\times{\bf t}|\sigma_x|\,dx+\frac12\sigma\times V\Big|_0^1\\
&\quad\null +\int_{u_-}^{u_+}\curv\,du-\frac{\curv}2X\times\nor\Big|_{u_-}^{u_+}\\
&\quad\null+ \dot u_+ X(u_+)\times \tang(u_+)-\dot u_-X(u_-)\times\tang(u_-)\\
&=2|K|+\int_0^1 V\cdot{\bf n}|\sigma_x|\,dx - \int_{u_-}^{u_+}\curv\,du,
\end{align*}
where in the last step we used equation \eqref{eq:V0} and \eqref{eq:V1}, the identities $\sigma(0)=X(u_+)$, $\sigma(1)=X(u_-)$, ${\bf t}(0)=-\nor(u_+)$, ${\bf t}(1)=\nor(u_-)$, $\tang(u_-)={\bf n}(0)$, and $\tang(u_+)=-{\bf n}(1)$,
and the fact that the parameter $u$ is chosen to be the arc-length parameter at time $t_0$, so that $|X_u|=1$.  Now since $\sigma([0,1],t_0)$ and $X([u_-,u_+],t_0)$ form a simple closed curve with two corners of angle $\pi/2$, the theorem of turning tangents implies
$$
\int_{a_-}^{a_+}\curv\,du + \int_{0}^{1}\curv_\sigma |\sigma_x|\,dx=\pi,
$$
so that (since $|\sigma_x|=f$ and $\curv_\sigma=f'$)
$$
\int_{u_-}^{u_+}\curv\,du = \pi-ff',
$$
and hence
\begin{equation}\label{eq:dtA}
\frac{\partial}{\partial t}|K_t|\Big|_{t=t_0}=2|K|+ff'-\pi.
\end{equation}

Finally, combining equations \eqref{eq:dtL} and \eqref{eq:dtA} we deduce
\begin{align*}
0&\geq \partial_t\left(|\partial_{\Omega_t}K_t|-f(|K_t|,t)\right)\big|_{t=t_0}\\
&=f-\curv(u_-)-\curv(u_+)+f'(\pi-2|K|)-f(f')^2-\frac{\partial f}{\partial t}
\end{align*}
as
claimed.
\end{proof}

Now we can complete the proof of Theorem \ref{thm:ODEcomparison}:   Combining the inequality from Lemma \ref{lem:time-var-ineq} with inequality \eqref{eq:sec.var.F}, we find
$$
-\frac{\partial f}{\partial t}+f+f'(\pi-2|K|)-f(f')^2\leq \curv(u_-)+\curv(u_+)\leq \frac{1}{f}{\mathcal F}(ff',f^3f'')
$$
where $f$, $f'$ and $f''$ are evaluated at $(|K|,t_0)$.  But this contradicts the strict inequality in the theorem.  Therefore the inequality $\Psi(\Omega_t,a)>f(a,t)$ remains true as long as the solution exists.
\end{proof}

\section{The Isoperimetric profile of symmetric convex curves with four vertices}

In this section we determine the isoperimetric regions and isoperimetric profile for convex domains which are symmetric in both coordinate axes and have exactly four vertices.   This result is somewhat analogous to the characterization of isoperimetric regions in rotationally symmetric surfaces with decreasing curvature due to Ritor\'e \cite{Ritore}.  We use it
 in the next section to construct solutions of the differential inequality arising in Theorem \ref{thm:ODEcomparison}.

\begin{theorem}\label{thm:model_isoperim}
 Let $\gamma=\partial\Omega$, where $\Omega$ is a smoothly bounded uniformly convex region of area $\pi$ with exactly four vertices and symmetry in both coordinate axes, with the points of maximum curvature on the $x$ axis.   Let $X: \RR\to\RR^2$ be the map which takes $\theta\in\RR$ to the point in $\gamma$ with outward normal direction $(\cos\theta,\sin\theta)$.  Then for each $\theta\in(0,\pi)$ there exists a unique constant curvature curve $\sigma_\theta$ which is contained in $\Omega$ and has endpoints at $X(\theta)$ and $X(-\theta)$ meeting $\gamma$ orthogonally.  Let $K^x_\theta$ denote the connected component of $\Omega\setminus\sigma_\theta$ containing the vertex of $\gamma$ on the positive $x$ axis.   Then there exists a smooth, increasing diffeomorphism $\theta$ from $(0,\pi)$ to $(0,\pi)$ such that $K_a=K^x_{\theta(a)}$ has area $a$ for each $a\in(0,\pi)$, and the isoperimetric regions of area $a$ in $\Omega$ are precisely $K_{a}$ and its reflection in the $y$ axis.
\end{theorem}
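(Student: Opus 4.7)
The plan is to prove four things in sequence: existence and uniqueness of the arc $\sigma_\theta$, its containment in $\Omega$, strict monotonicity of $\theta\mapsto|K^x_\theta|$, and the characterization of isoperimetric regions. The main obstacle will be the containment step, which is where the four-vertex hypothesis does the real work.

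First I construct $\sigma_\theta$ explicitly. A constant-curvature planar curve is a circular arc or a line segment, and orthogonality to $\gamma$ at $X(\theta)$ forces the center of its supporting circle to lie on the tangent line of $\gamma$ at $X(\theta)$. By the $x$-axis symmetry of $\gamma$, the tangent line at $X(-\theta)$ is the $x$-axis reflection of the tangent line at $X(\theta)$, so the two tangent lines meet at a single point on the $x$-axis whenever $\theta\neq\pi/2$; in the degenerate case $\theta=\pi/2$ the supporting circle degenerates to the $y$-axis itself and $\sigma_{\pi/2}$ is the straight segment on it. This determines $\sigma_\theta$ uniquely.

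Next I would establish containment $\sigma_\theta\subset\Omega$ and area monotonicity together by showing that $\{\sigma_\theta\}_{\theta\in(0,\pi)}$ foliates $\Omega$ minus the two $x$-axis vertices. For $\theta$ close to $0$, the construction of the previous step shows $\sigma_\theta$ is a tiny arc close to the osculating circle of $\gamma$ at $X(0)$, which is strictly inside $\Omega$ in a neighbourhood of the strict curvature maximum $X(0)$. If $\theta_*$ were the smallest $\theta$ at which $\sigma_\theta$ first touches $\gamma$ at an interior point $Y$, then at $Y$ the supporting circle of $\sigma_{\theta_*}$ would be tangent to $\gamma$ and would have radius equal to that of the osculating circle of $\gamma$ at $Y$. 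Combined with the two orthogonal meetings at $X(\pm\theta_*)$, this produces a configuration that cannot be fit into the family of osculating circles along the monotone-curvature arc of $\gamma$ from $X(0)$ to $Y$ without violating the Tait--Kneser nesting theorem. Because ``exactly four vertices'' means $\curv$ is strictly monotone on each of the four arcs between consecutive vertices, Tait--Kneser does apply and yields a contradiction. The same nesting argument also shows that distinct $\sigma_{\theta_1}$ and $\sigma_{\theta_2}$ do not cross, so the regions $K^x_\theta$ are strictly nested and $\theta\mapsto|K^x_\theta|$ is smooth, strictly increasing, with endpoint limits $0$ and $\pi$, giving the required diffeomorphism $\theta(a)$.

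Finally, I characterize the isoperimetric regions. Standard existence and regularity yield an isoperimetric region $K$ whose relative boundary $\partial_\Omega K$ is a finite union of circular arcs of a common radius meeting $\gamma$ orthogonally. Disconnectedness is ruled out by the strict-concavity argument of Lemma \ref{lem:connecteddomains}, applied with $f(a)=|\sigma_{\theta(a)}|$ (strict concavity of this $f$ follows from a direct calculation using the foliation). Because $\Omega$ is $x$-symmetric, Steiner symmetrization of $K$ across the $x$-axis strictly decreases the relative perimeter unless $\partial_\Omega K$ is already $x$-symmetric, so by the uniqueness established in the first step $\partial_\Omega K$ coincides with some $\sigma_\theta$, and by matching areas $\theta=\theta(a)$ is determined uniquely. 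The two minimizers are then $K^x_{\theta(a)}$ and its reflection in the $y$-axis, as asserted; the hardest ingredient, as flagged, remains the Tait--Kneser step, which is the channel through which the four-vertex hypothesis controls everything.
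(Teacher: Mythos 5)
Your construction of $\sigma_\theta$ in the first step agrees with the paper's (the center of the supporting circle lies on the tangent lines at $X(\pm\theta)$, hence on the $x$-axis by symmetry), but the containment step is where the proposal breaks down.

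The Tait--Kneser argument does not work as stated. You claim that at the first parameter $\theta_*$ where $\sigma_{\theta_*}$ touches $\gamma$ at an interior point $Y$, the supporting circle would have radius \emph{equal} to that of the osculating circle of $\gamma$ at $Y$. This is false: a circle that is tangent to $\gamma$ at $Y$ from the inside (which is what a first touching gives) need only have curvature \emph{greater than or equal to} $\curv_\gamma(Y)$, not equal. More fundamentally, the supporting circle of $\sigma_{\theta_*}$ is not an osculating circle of $\gamma$ --- it is defined by orthogonal intersection at two points, a completely different condition --- so Tait--Kneser (which concerns the pairwise nesting of the osculating circles of a single curve with monotone curvature) says nothing directly about how this circle sits relative to the osculating circles along the arc from $X(0)$ to $Y$. ``Cannot be fit into the family'' is not an argument; you would need to exhibit a specific pair of osculating circles that the presence of this circle forces to intersect. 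The same gap infects your claim that distinct $\sigma_{\theta_1}$, $\sigma_{\theta_2}$ do not cross, and hence your derivation of area monotonicity and the concavity of $f$, both of which you route through the ``foliation.'' The paper instead proves containment by a direct support-function computation (Lemma~\ref{lem:inscribed.disk}), showing first $\curv(X(\theta))>\bar\curv$ by an averaging inequality for the radius of curvature, then $h\geq\bar\rr$ everywhere by solving the ODE $v''+v=\rr'$ with the appropriate boundary data; and it establishes area monotonicity by an explicit computation of $\partial_\theta|K^x_\theta|$ that again uses the same radius-of-curvature inequality.

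The Steiner symmetrization step is the right kind of idea but is also incomplete as written. You invoke it to conclude that a minimizer's relative boundary is $x$-symmetric and hence equals some $\sigma_\theta$, but in the constrained setting the equality case of Steiner's inequality is delicate, and moreover one must then rule out that the $y$-axis-centred arcs $K^y_\theta$ tie for the minimum rather than lose strictly. The paper instead classifies all orthogonally-meeting constant-curvature arcs via the zero set of an explicit function $G(\theta_1,\theta_2)$ of the support function (reducing the candidates to the two families $K^x_\theta$, $K^y_\theta$), and then eliminates $K^y_\theta$ by a direct second-variation instability computation (Proposition~\ref{prop:unstable_y_family}) that transplants the neutral rotational variation of the orthogonally inscribed disk. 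Your Steiner approach could plausibly be completed, but as given it leaves the decisive comparison between the $x$- and $y$-families unproved.
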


\begin{figure}
\includegraphics[scale=0.7]{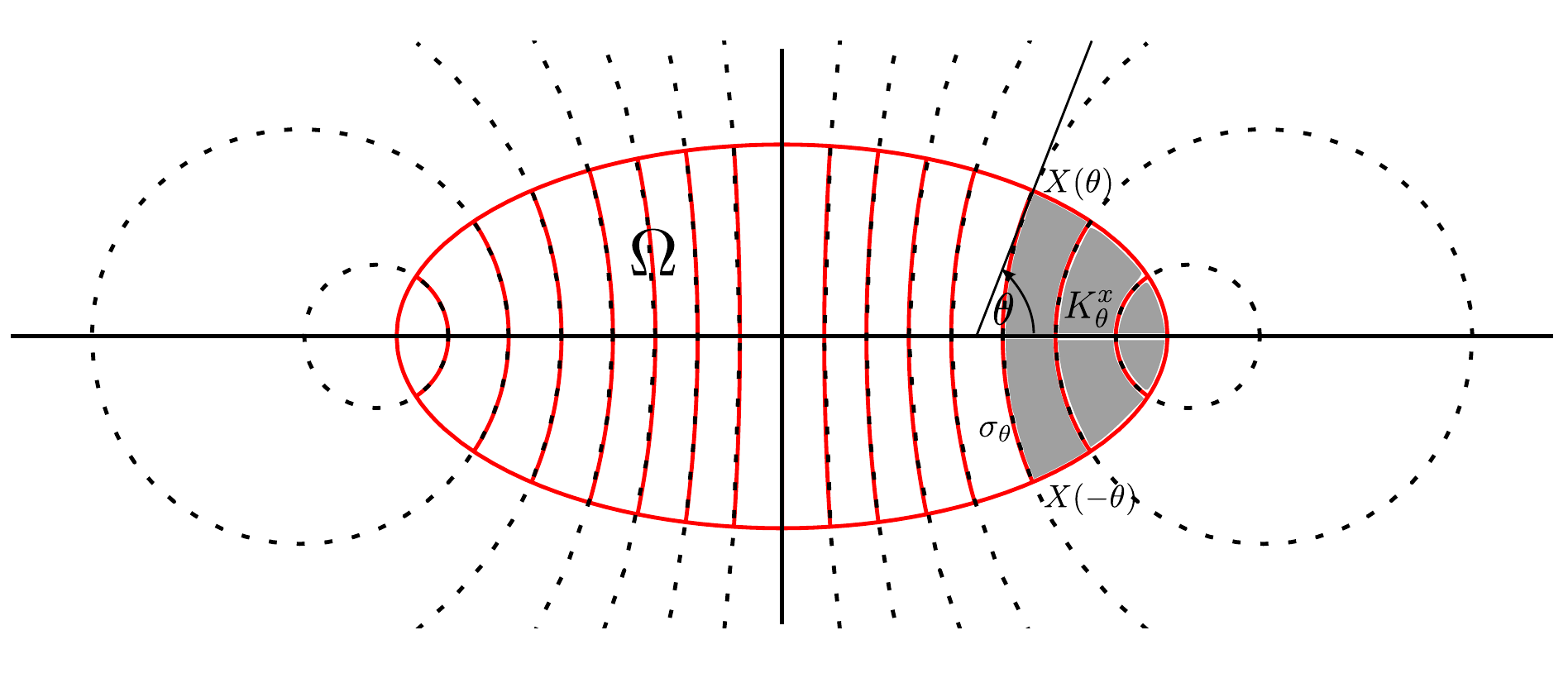}
\caption{Isoperimetric regions of the ellipse $\{x^2+4y^2\leq 4\}$, according to Theorem \ref{thm:model_isoperim}.}
\end{figure}

\begin{proof}
Since $\Omega$ is uniformly convex and $\gamma$ is smooth, for each $\theta\in\RR$ there exists a unique point $X(\theta)\in\gamma$ where the outward unit normal is equal to $e^{i\theta}=(\cos\theta,\sin\theta)$.  Furthermore we can write $X(\theta)$ in terms of the support function $h:\ \RR/(2\pi\ZZ)\to\RR$ of $\Omega$, defined by $h(\theta)=\sup\{\langle x,e^{i\theta}\rangle:\ x\in\Omega\}$:   
\begin{equation}\label{eq:X}
X(\theta)=(h(\theta)+ih'(\theta))\E^{i\theta}.
\end{equation}
The radius of curvature at the corresponding point is then given by $h''+h$.
The symmetry assumptions on $\Omega$ imply that $h$ is even and $\pi$-periodic.

For strictly convex $\Omega$ it was proved by Sternberg and Zumbrun \cite{SZ} that the boundary $\partial_{\Omega}K$ of an isoperimetric region $K$ is connected.  Therefore we have two possibilities:  The first case is where the curvature of the boundary is zero, in which case $K=\Omega\cap\{x:\ \langle x,e^{i\theta}\rangle \leq r\}$ for some $\theta,r\in\RR$.  Since $\partial_{\Omega}K$ meets $\gamma$ orthogonally, the endpoints of points of intersection must have normal orthogonal to $e^{i\theta}$, and so are the two points $X(\theta+\pi/2)$ and $X(\theta-\pi/2)$.  But then we must also have 
$\langle X(\theta+\pi/2),\E^{i\theta}\rangle = \langle X(\theta-\pi/2),\E^{i\theta}\rangle$, which by \eqref{eq:X} and the symmetry of $h$ implies 
\begin{align*}
0&=\left\langle\left(\!h(\theta\!+\!\frac{\pi}{2})+ih'(\theta\!+\!\frac{\pi}{2})\!\right)\E^{i(\theta+\frac{\pi}{2})}\!-\!\left(\!h(\theta\!-\!\frac{\pi}{2})\!+\!ih'(\theta\!-\!\frac{\pi}{2})\!\right)\E^{i(\theta-\frac{\pi}{2})},e^{i\theta}
\right\rangle\\
&=-h'(\theta+\frac{\pi}{2})-h'(\theta-\frac{\pi}{2})\\
&=-2h'(\theta+\frac{\pi}{2}).
\end{align*}

\begin{lemma}\label{lem:htheta}
$h'(\theta)=0$ only for $\theta=\frac{k\pi}{2}$, $k\in\ZZ$.
\end{lemma}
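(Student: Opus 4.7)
The plan is to reduce the lemma to a Sturm-type non-oscillation statement for $\phi := h'$, with the four-vertex hypothesis supplying a strict sign for $\phi'' + \phi$. First I record the consequences of the symmetry assumptions: since $\Omega$ is symmetric in both coordinate axes, $h$ is even and $\pi$-periodic, hence $\phi$ is odd and $\pi$-periodic, and in particular $\phi(k\pi/2) = 0$ for every $k \in \ZZ$. By these symmetries it suffices to rule out a zero of $\phi$ in the open interval $(0, \pi/2)$. The curvature $\curv(\theta) = 1/(h(\theta)+h''(\theta))$ inherits the same evenness and periodicity, so $\curv'$ automatically vanishes at each $k\pi/2$; combined with the hypothesis of exactly four vertices, these must be \emph{all} of the critical points of $\curv$. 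Since the maximum of $\curv$ is attained on the $x$-axis, $\curv$ is strictly decreasing on $(0, \pi/2)$, and using $\curv' = -(h'+h''')/(h+h'')^2$ this yields the key strict inequality
$$
\phi''(\theta) + \phi(\theta) = h'''(\theta) + h'(\theta) > 0 \qquad \text{for every } \theta \in (0, \pi/2).
$$

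Next I would argue by contradiction: suppose $\phi(\theta_0)=0$ for some $\theta_0 \in (0, \pi/2)$, and distinguish two cases. If $\phi$ takes a strictly positive value somewhere in $(0, \pi/2)$, then since $\phi$ also vanishes at $0$, $\theta_0$, and $\pi/2$, a connected component of $\{\phi > 0\}$ produces an open subinterval $(a, b) \subseteq [0, \pi/2]$ with $\phi > 0$ on $(a, b)$, $\phi(a) = \phi(b) = 0$, and length $b - a \leq \pi/2$. Multiplying the inequality above by the positive function $\phi$, integrating over $(a, b)$, and integrating by parts using the Dirichlet data at the endpoints gives
$$
\int_a^b \phi^2 \, d\theta > \int_a^b (\phi')^2 \, d\theta.
$$
But the Dirichlet Poincar\'e inequality on an interval of length at most $\pi/2$ yields $\int_a^b (\phi')^2 \geq 4 \int_a^b \phi^2$, and the two bounds together give $1 > 4$, a contradiction. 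In the opposite case $\phi \leq 0$ throughout $[0, \pi/2]$, the interior zero $\theta_0$ is a local maximum of $\phi$ with value zero, so $\phi''(\theta_0) \leq 0$ and hence $\phi''(\theta_0) + \phi(\theta_0) \leq 0$, again contradicting the strict inequality.

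The main obstacle is the first step: converting the qualitative ``exactly four vertices'' hypothesis together with the symmetry into the quantitative strict inequality $h''' + h' > 0$ on each open quarter-arc. Once that is in hand the ODE comparison is a routine Sturm-type argument, resting on the fact that the first Dirichlet eigenvalue of $-d^2/d\theta^2$ on any interval of length at most $\pi/2$ is at least $4$, comfortably dominating the $+\phi$ perturbation. The conclusion of the lemma on the remaining intervals $(k\pi/2, (k+1)\pi/2)$ then follows at once by invoking the oddness and $\pi$-periodicity of $\phi$.
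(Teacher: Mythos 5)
Your proof is correct, and the second half takes a genuinely different route from the paper's. The opening step---deriving the strict inequality $h'''+h'>0$ on $(0,\pi/2)$ from the four-vertex hypothesis together with the evenness and $\pi$-periodicity of $h$ and the location of the curvature maximum---is essentially identical to the paper's. The divergence is in how you then exclude zeros of $\phi=h'$ on $(0,\pi/2)$. The paper sets $P=h'\cos\theta-h''\sin\theta$ and $Q=h'\sin\theta+h''\cos\theta$, computes $P'=-(h'''+h')\sin\theta<0$ and $Q'=(h'''+h')\cos\theta>0$, and reads off $P<0$, $Q<0$ from the endpoint values $P(0)=0$, $Q(\pi/2)=0$; then $h'=P\cos\theta+Q\sin\theta<0$. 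This is variation of parameters for $(h')''+h'=q$ and gives the sign of $h'$ directly by elementary means. You instead multiply $\phi''+\phi>0$ by $\phi$, integrate by parts over a component $(a,b)$ of $\{\phi>0\}$, and invoke the sharp Dirichlet Poincar\'e constant $\pi^2/(b-a)^2\geq 4$ on an interval of length at most $\pi/2$, together with the second-derivative test at an interior zero for the case $\phi\le 0$. Both arguments are clean Sturm-type non-oscillation statements for $\phi''+\phi>0$; the paper's integrating-factor version has the advantage of being entirely self-contained and producing the strict sign $h'<0$ in one stroke, whereas yours makes the ``eigenvalue $>1$ on a short interval'' mechanism explicit at the small cost of appealing to the Poincar\'e inequality. (Incidentally your bound is stronger than needed: the contradiction $\int\phi^2>\int(\phi')^2\ge\lambda_1\int\phi^2$ only requires $\lambda_1\ge 1$, i.e.\ $b-a\le\pi$, which is automatic here, so the constant $4$ is overkill.)
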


\begin{proof}
Since $h$ is even and $\pi$-periodic, we have $h^{(3)}+h'=0$ at each of the points $\theta=\frac{k\pi}{2}$, so there are four vertices (critical points of curvature, hence of the radius of curvature) at $\theta=0$, $\pi/2$, $\pi$ and $3\pi/2$.  Since there are precisely four vertices by assumption, we have $h^{(3)}+h'\neq 0$ at every other point.  By assumption $h''(\pi/2)+h(\pi/2)>h''(0)+h(0)$, so we must have $h^{(3)}+h'>0$ on $(0,\pi/2)$.

Now let $P(\theta)=h'(\theta)\cos\theta-h''(\theta)\sin\theta$ and $Q(\theta)=h'(\theta)\sin\theta+h''(\theta)\cos\theta$.  We have $P(0)=h'(0)=0$ and $P' = -(h^{(3)}+h')\sin\theta<0$ on $(0,\pi/2)$, so $P<0$ on $(0,\pi/2]$.  Also we have $Q(\pi/2)=h'(\pi/2)=0$ and $Q'=(h^{(3)}+h')\cos\theta>0$ on $(0,\pi/2)$, so $Q<0$ on $[0,\pi/2)$.  But then $h'(\theta) = P(\theta)\cos\theta+Q(\theta)\sin\theta<0$ on $(0,\pi/2)$.  Thus $h$ has no critical points in $(0,\pi/2)$, and hence also no critical points on $\left(\frac{k\pi}{2},\frac{(k+1)\pi}{2}\right)$ for any $k\in\ZZ$ since $h$ is even and periodic.
\end{proof}

It follows that the only possibilities for isoperimetric regions of this kind are the intersections of the coordinate half-spaces with $\Omega$.   These all divide the area of $\Omega$ into regions with area $\pi/2$, and so the only ones which can be isoperimetric are those with shorter length of intersection, which are the halfspaces of positive or negative $x$.

The second case is where the curvature of the boundary of $K$ is non-zero, in which case $K=\Omega\cap B_r(p)$ for some $r>0$ and $p\in\RR^2$.  In this case the intersection of the circle $S_r(p)$ with $\gamma$ consists of two points $X(\theta_2)$ and $X(\theta_1)$, and since the circle meets $\gamma$ orthogonally the line from $p$ to $X(\theta_1)$ is orthogonal to $e^{i\theta_1}$, and we have $p=X(\theta_1)+rie^{i\theta_1}$.  Similarly $p=X(\theta_2)-rie^{i\theta_2}$.   That is, we have by \eqref{eq:X} 
$$
p=(h(\theta_1)+ih'(\theta_1)+ir)e^{i\theta_1}=(h(\theta_2)+ih'(\theta_2)-ir)e^{i\theta_2}.
$$
The equality on the right can be solved for $r$:  Multiply by $e^{-i(\theta_1+\theta_2)/2}$ and write $\Delta=\frac{\theta_2-\theta_1}{2}$.  This gives
\begin{align*}
2ir\cos\Delta &= (h(\theta_2)-h(\theta_1))\cos\Delta-(h'(\theta_2)+h'(\theta_1))\sin\Delta\\
&\quad\null+i\left[(h(\theta_2)+h(\theta_1)\sin\Delta+(h'(\theta_2)-h'(\theta_1))\cos\Delta\right].
\end{align*}
Since $r$ is real, the real part of the right-hand side vanishes.  We denote this by $G(\theta_1,\theta_2)$:
$$
G(\theta_1,\theta_2):= (h(\theta_2)-h(\theta_1))\cos\Delta-(h'(\theta_2)+h'(\theta_1))\sin\Delta.
$$

\begin{lemma}
The zero set of $G$ consists precisely of the points $\{\theta_1+\theta_2=k\pi\}$ for $k\in\ZZ$ and the points $\{\theta_2-\theta_1=2k\pi\}$ for $k\in\ZZ$.
\end{lemma}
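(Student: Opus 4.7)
My plan is to treat the easy direction first by direct substitution, then reduce the converse to a fundamental domain and finish by a Rolle's theorem argument that exploits the four-vertex hypothesis. For the easy direction, evenness and $\pi$-periodicity of $h$ (hence oddness of $h'$) yield the vanishing immediately: if $\theta_2 - \theta_1 = 2k\pi$ then $\Delta = k\pi$ makes $\sin\Delta = 0$ and $h(\theta_2) = h(\theta_1)$; if $\theta_1 + \theta_2 = k\pi$ then $h(\theta_2) = h(-\theta_1) = h(\theta_1)$ and $h'(\theta_2) = h'(-\theta_1) = -h'(\theta_1)$, so both factors in the definition of $G$ vanish.

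For the converse I would switch to coordinates $s = (\theta_1 + \theta_2)/2$ and $\Delta = (\theta_2 - \theta_1)/2$. Four symmetries of $G$, each a direct check from the definition, namely
\[
G(s{+}\pi, \Delta) = G(s, \Delta), \ \ G(-s, \Delta) = -G(s, \Delta), \ \ G(s, -\Delta) = -G(s, \Delta), \ \ G(s, \Delta{+}\pi) = -G(s, \Delta),
\]
reduce the problem to showing $G(s, \Delta) \neq 0$ on the open rectangle $(s, \Delta) \in (0, \pi/2) \times (0, \pi)$.

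The crucial identity is
\[
\partial_\Delta G(s, \Delta) = -(r(\theta_2) - r(\theta_1))\sin\Delta,
\]
where $r = h + h''$ is the radius of curvature; it follows from direct differentiation, the $h'$ contributions from the derivatives of $\cos\Delta$ and $\sin\Delta$ cancelling against those from differentiating $h(\theta_2) - h(\theta_1)$ and $h'(\theta_2) + h'(\theta_1)$. The monotonicity $r' = h''' + h' > 0$ on $(0, \pi/2)$ that is established inside the proof of Lemma~\ref{lem:htheta}, together with $r$ being even and $\pi$-periodic, classifies the level sets of $r$: $r(\alpha) = r(\beta)$ iff $\alpha \equiv \beta \pmod{\pi}$ or $\alpha + \beta \equiv 0 \pmod{\pi}$. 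Applied to $\theta_1 = s - \Delta$ and $\theta_2 = s + \Delta$ on the open rectangle these read $2\Delta \in \pi\ZZ$ and $2s \in \pi\ZZ$; only the first is realised, uniquely at $\Delta = \pi/2$. Combined with $\sin\Delta > 0$ on $(0, \pi)$, this shows that for each fixed $s \in (0, \pi/2)$ the derivative $\partial_\Delta G(s, \cdot)$ has exactly one zero in $(0, \pi)$.

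Since $G(s, 0) = G(s, \pi) = 0$ by the easy direction, any additional zero $\Delta^* \in (0, \pi)$ of $G(s, \cdot)$ would, via Rolle's theorem applied on each of $(0, \Delta^*)$ and $(\Delta^*, \pi)$, force $\partial_\Delta G(s, \cdot)$ to vanish at two distinct interior points, contradicting the uniqueness just established. I expect the main obstacle to be packaging the classification of level sets of $r$ cleanly from the four-vertex hypothesis; the identity for $\partial_\Delta G$, the symmetry reduction, and the Rolle step are each short once that classification is in hand.
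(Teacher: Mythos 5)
Your argument is correct and follows essentially the same route as the paper: the easy direction by parity and periodicity of $h$, the key identity $\partial_\Delta G = -(r(\theta_2)-r(\theta_1))\sin\Delta$, the symmetry reduction to a fundamental rectangle, and the use of the four-vertex hypothesis to control the level sets of $r=h+h''$. The only difference is in the final step: the paper concludes directly that $G$ is strictly monotone in $\Delta$ on its fundamental domain (where $r(\theta_2)-r(\theta_1)$ has a fixed sign) starting from $G=0$ at $\Delta=0$, whereas you fold this into a Rolle argument by noting $G(s,0)=G(s,\pi)=0$ and that $\partial_\Delta G(s,\cdot)$ has a unique interior zero; both are valid and of comparable length.
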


\begin{proof}
The symmetry of $h$ implies $h(\theta)=h(\theta+k\pi)=h(k\pi-\theta)$ and $h'(\theta)=h'(\theta+k\pi)=-h'(k\pi-\theta)$ for any $k\in\ZZ$.  Thus when $\theta_2+\theta_1=k\pi$ we have $h(\theta_2)=h(k\pi-\theta_1)=h(\theta_1)$ and $h'(\theta_2)=h'(k\pi-\theta_1)=-h'(\theta_1)$, and hence $G=0$.  Also, when $\theta_2-\theta_1=2k\pi$ then we have $\sin\Delta=0$ and $h(\theta_2)-h(\theta_1)=0$, so $G=0$.  To show the converse, we compute the derivative of $G$ along lines of constant $\theta_1+\theta_2$:  
\begin{align*}
\frac{\partial G}{\partial\theta_2}&=h'(\theta_2)\cos\Delta-\frac12(h(\theta_2)-h(\theta_1))\sin\Delta\\
&\quad\null-h''(\theta_2)\sin\Delta-\frac12(h'(\theta_1)+h'(\theta_2))\cos\Delta\\
&=-(h''(\theta_2)+h(\theta_2))\sin\Delta\\
&\quad\null+\frac12(h'(\theta_2)-h'(\theta_1)\cos\Delta+\frac12(h(\theta_1)+h(\theta_2))\sin\Delta;\\
\frac{\partial G}{\partial\theta_1}&=-h'(\theta_1)\cos\Delta+\frac12(h'(\theta_1)-h'(\theta_2)\sin\Delta\\
&\quad\null-h''(\theta_1)\sin\Delta+\frac12(h'(\theta_2)+h'(\theta_1))\cos\Delta\\
&=-(h''(\theta_1)+h(\theta_1))\sin\Delta\\
&\quad\null+\frac12(h'(\theta_2)-h'(\theta_1)\cos\Delta+\frac12(h(\theta_1)+h(\theta_2))\sin\Delta.
\end{align*}
Taking the difference gives
\begin{equation}\label{eq:derivG}
\frac{\partial G}{\partial\theta_2}-\frac{\partial G}{\partial\theta_1} = [(h''(\theta_1)+h(\theta_1))-(h''(\theta_2)+h(\theta_2)]\sin\Delta.
\end{equation}
As above, the assumption that $\gamma$ has exactly four vertices with the points of maximum curvature on the $x$ axis implies that $h''+h$ is strictly increasing on intervals $[k\pi,(k+\frac12)\pi]$, and strictly decreasing on intervals $[(k+\frac12)\pi,(k+1)\pi]$ for any $k\in\ZZ$.   The 
symmetries of $h$ imply that $G$ is odd under reflection in the lines $\theta_1+\theta_2=0$, $\theta_2-\theta_1=0$ and $\theta_2+\theta_1=\pi$, and even under reflection in the line $\theta_2-\theta_1=\pi$, and that $G(\theta_1+\pi,\theta_2+\pi)=G(\theta_1,\theta_2)$ and $G(\theta_1+\pi,\theta_2-\pi)=-G(\theta_1,\theta_2)$.  Therefore it suffices to show that $G\neq 0$ on the fundamental domain $W=\left\{(\theta_1,\theta_2):\ \theta_1\in(-\frac{\pi}{2},\frac{\pi}{2}),\ \theta_2\in(|\theta_1|,\pi-|\theta_1|)\right\}$.  The monotonicity of $h''+h$ implies that $h''(\theta_2)+h(\theta_2)> h''(\theta_1)+h(\theta_1)$ on $W$.   Equation \eqref{eq:derivG} implies that $G$ is increasing along lines of constant $\theta_1+\theta_2$ in $W$ away from the line $\{\theta_2=\theta_1\}$ where $G=0$.  Hence $G$ is positive on $W$ as required.
\end{proof}
The lemma implies that the only candidates for boundaries of isoperimetric regions of this type are the 
following two families:  

For each $\theta\in(0,\pi/2)$ there is a unique region 
$K^x_\theta=\Omega\cap B_{r(\theta)}(p(\theta))$, where $p(\theta)$ lies in the positive $x$ axis, and the outward normals to $\Omega$ at the endpoints of $\partial_\Omega K^x_\theta$ make angles $\pm\theta$ with the positive $x$ axis.  In this family we also take $K^x_{\pi/2}$ to be the intersection of $\Omega$ with the positive $x$ half-space, and $K^x_{\pi-\theta}$ is the exterior in $\Omega$ of the reflection of $K^x_\theta$ in the $y$ axis.

The second family is similar but with centres on the $y$ axis:  $K^y_\theta=\Omega\cap B_{\rho(\theta)}(q(\theta))$, where $q(\theta)$ lies in the positive $y$ axis, and the outward normals to $\Omega$ at the endpoints of $\partial_\Omega K^y_{\theta}$ makes angles $\pm\theta$ with the positive $y$ axis, for $0<\theta<\pi/2$, while $K^y_{\pi/2}$ is the intersection of $\Omega$ with the upper $y$ half-space, and $K^y_{\pi-\theta}$ is the exterior in $\Omega$ of the reflection  of $K^y_\theta$ in the $x$ axis.  Note that these regions are candidates for the isoperimetric region only if $K^y_\theta$ has only a single boundary curve, which is not always the case.

Note that we do not claim at this stage that the regions $K^x_\theta$ and $K^y_\theta$ define simply connected sub-regions of $\Omega$ for every $\theta\in(0,\pi)$:  The curves certainly exist, but may intersect the boundary of $\Omega$ at other points.   Indeed this certainly occurs for very long, thin regions for the family $K^y_\theta$.  We will prove below that the family $K^x_\theta$ are always simply connected and have a single boundary component.

The following result shows that only the $K^x_\theta$ can be isoperimetric regions:

\begin{proposition}\label{prop:unstable_y_family}
For any $\theta\in(0,\pi)$ for which $\partial_\Omega K^y_\theta$ is connected, there exists a smoothly family of regions $\{\tilde K(s):\ |s|<\delta\}$ with $\tilde K(0)=K^y_\theta$, $\frac{d}{ds}|K(s)|=0$ for all $s$, and $\frac{d}{ds}|\partial_\Omega\tilde K(s)\big|_{s=0}=0$, and $\frac{d^2}{ds^2}|\partial_\Omega\tilde K(s)\big|_{s=0}<0$.  In particular, $K^y_\theta$ does not minimize length among regions with the same area.
\end{proposition}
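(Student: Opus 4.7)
The plan is to apply the second variation formula derived in the proof of Lemma~\ref{lem:second-var-ineq} to a single well-chosen test function---the normal component of the translation vector field $e_1$ along $\sigma = \partial_\Omega K^y_\theta$---and verify that it produces a strictly negative quadratic form, with the crucial sign provided by the four-vertex hypothesis.

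Since $\sigma$ is an arc of the circle $\partial B_{\rho}(q)$ of length $L = \rho(\pi-2\theta)$ meeting $\gamma$ orthogonally, and $K^y_\theta$ is a critical point of the constrained isoperimetric problem, the second variation of length for an area-preserving normal variation with initial velocity $\varphi$ takes the form (as in the proof of Lemma~\ref{lem:second-var-ineq}, after imposing the second-order area-preservation constraint $\int\dot\eta\,du + \curv_\sigma\int\varphi^2\,du = 0$ to eliminate the $\dot\eta$ term)
\[
Q(\varphi) = \int_\sigma \left((\varphi')^2 - \curv_\sigma^2\varphi^2\right) du - \kappa_0\!\left(\varphi(u_+)^2 + \varphi(u_-)^2\right),
\]
valid for any $\varphi$ with $\int_\sigma\varphi\,du = 0$, where $\kappa_0 = \curv_\gamma(u_+) = \curv_\gamma(u_-)$ by the $y$-axis symmetry of $\Omega$. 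I would parametrize $\sigma$ by arc length $u\in[-L/2,L/2]$ centered at the $y$-axis intersection and take $\varphi(u) = \sin(u/\rho)$; this is odd (hence of zero mean) and is an eigenfunction of $-\partial_u^2$ with eigenvalue $\curv_\sigma^2 = 1/\rho^2$, and a standard construction produces an area-preserving smooth family realizing $\varphi$ as initial normal velocity.

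Evaluating $Q(\varphi)$ via integration by parts reduces the bulk integral to a pure boundary term, and a direct computation using $L/(2\rho) = \pi/2 - \theta$ yields $Q(\varphi) = 2\cos\theta\bigl(\sin\theta/\rho - \kappa_0\cos\theta\bigr)$, so $Q(\varphi) < 0$ if and only if $\kappa_0\rho > \tan\theta$.  The formulas from the proof of Theorem~\ref{thm:model_isoperim}, specialized to $\theta_1 + \theta_2 = \pi$ with $q$ on the $y$-axis, give $\rho = h(\pi/2-\theta)\tan\theta - h'(\pi/2-\theta)$ and $\kappa_0 = 1/(h+h'')(\pi/2-\theta)$.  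Substituting $\psi = \pi/2 - \theta$ and simplifying, the desired inequality reduces to
\[
g(\psi) := h'(\psi)\sin\psi + h''(\psi)\cos\psi < 0 \qquad\text{for all } \psi\in(0,\pi/2).
\]

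The main obstacle is verifying this last inequality, which is exactly where the four-vertex hypothesis enters decisively.  I would differentiate to obtain $g'(\psi) = (h+h'')'(\psi)\cos\psi$; because $\gamma$ has exactly four vertices with the maxima of curvature on the $x$-axis, the radius of curvature $h+h''$ is strictly increasing on $(0,\pi/2)$, so $g'(\psi) > 0$ there, and since $g(\pi/2) = h'(\pi/2) = 0$ by the $y$-axis symmetry of $h$, one concludes $g < 0$ on $(0,\pi/2)$ as required.  The remaining cases are quick: for $\theta\in(\pi/2,\pi)$ the stability of $K^y_\theta$ coincides with that of its complement $\Omega\setminus K^y_\theta = K^y_{\pi-\theta}$, since they share the same boundary arc $\sigma$ and have equivalent area-preservation constraints; the borderline $\theta = \pi/2$ may be covered by the limiting test function $\varphi(u) = u$, using that $h''(0) \leq 0$ (which is forced by $\psi = 0$ being a maximum of $h$, and is strict in the generic four-vertex setting).
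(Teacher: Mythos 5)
Your proposal is correct, and it runs on the same chassis as the paper's proof: an area-preserving normal variation whose initial velocity is (up to scale) $\varphi(u)=\sin(u/\rho)$ on the arc $\sigma$, with the crucial sign of the second variation coming from the four-vertex hypothesis. The paper produces the same $\varphi$ by rotating $\sigma$ about the centre of the auxiliary disk $B$ that meets $\sigma$ orthogonally at its endpoints, and then has only to compare $\bar\curv=1/\bar\rr$ (the curvature of $B$) with $\kappa_0=\curv(\pi/2\pm\theta)$; this is done via the weighted-average identity $\bar\rr=\int\rr\sin\theta'\,d\theta'\big/\int\sin\theta'\,d\theta'$ and monotonicity of $\rr$. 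You instead compute $Q(\varphi)$ directly, using that $\varphi$ is a $\curv_\sigma^2$-eigenfunction of $-\partial_u^2$, and verify the resulting inequality $\kappa_0\rho>\tan\theta$ by substituting the support-function identity $\rho=h(\psi)\tan\theta-h'(\psi)$ (read off from the $G=0$ condition in Theorem~\ref{thm:model_isoperim}). Since $\bar\curv=\tan\theta/\rho$, the two final inequalities coincide, and in fact your scalar reduction $g(\psi)=h'(\psi)\sin\psi+h''(\psi)\cos\psi<0$ is precisely the function $Q(\theta)$ analysed in Lemma~\ref{lem:htheta}, so that step could simply be cited rather than re-derived. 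Your route avoids constructing and estimating the inscribed disk $B$, at the cost of invoking the support-function formula for the centre; either is a legitimate way to finish.

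Two small repairs. The ``generic'' caveat at $\theta=\pi/2$ is unnecessary: since $g'=(h+h'')'\cos\psi>0$ on $(0,\pi/2)$ and $g(\pi/2)=h'(\pi/2)=0$, the function $g$ is strictly increasing on $[0,\pi/2]$, so $h''(0)=g(0)<g(\psi)<0$ for every $\psi\in(0,\pi/2)$---always strict, no genericity needed; and the natural limiting test function is $\rho\sin(u/\rho)\to u$, not $\sin(u/\rho)\to 0$. Also, $\Omega\setminus \bar K^y_\theta$ is the reflection of $K^y_{\pi-\theta}$ across the $x$-axis rather than $K^y_{\pi-\theta}$ itself; since $\Omega$ is symmetric under that reflection the second variations agree, so your conclusion for $\theta\in(\pi/2,\pi)$ is unaffected.
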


\begin{proof}
The idea of the proof is to use the fact that the isoperimetric domains inside a round ball are neutrally stable (with the direction of neutral stability given by rotation around the disk).  We will transplant this variation onto $\partial_\Omega K^y_\theta$ to produce an area-preserving variation for which the second variation of the length $|\partial_\Omega K|$ is negative. 

As in Lemma \ref{lem:first-var} we parametrize $\partial_{\Omega}K^y_\theta$ 
by a smooth map $\sigma_0:\ [0,1]\to\Omega$ with $\sigma_0(0)=X(\pi/2+\theta)$ and $\sigma_0(1)=X(\pi/2-\theta)$, and $|\partial_x\sigma_0|$ constant (equal to the length $|\partial_{\Omega}K^y_\theta|$).
For any smooth function $\varphi:\ [0,1]\to\RR$ with $\int_0^1\varphi\,dx=0$, $\sigma_0$ can be extended to a smooth family of embeddings $\sigma:\ [0,1]\times(\delta,\delta)\to\Omega$ with the following properties:  $\sigma(x,0)=\sigma_0(x)$ for all $x\in[0,1]$; $\sigma(0,s)=X(\theta_+(s))$ and $\sigma(1,s)=X(\theta_-(s))$ for some $\theta_{\pm}(s)$; $\frac{\partial}{\partial s}\sigma(x,s)\big|_{s=0}=\varphi(x){\bf n}(x)$, where ${\bf n}$ is the outward-pointing unit normal to $K^y_\theta$; and the areas of the enclosed regions $K_s$ are constant:
$$
|K_s| = \frac12\int_0^1\sigma\times\sigma_x\,dx+\int_{\theta_-(s)}^{\theta_+(s)}X\times X_\theta\,d\theta=|K^y_\theta|.
$$ 
As in Lemma \ref{lem:first-var} we write $\frac{\partial\sigma}{\partial s}=\eta{\bf n}+\xi{\bf t}$, so that $\eta(x,0)=\varphi(x)$ and $\xi(x,0)=0$.  The computation of Lemma \ref{lem:second-var-ineq} yields the following:
\begin{align*}
\frac{\partial^2}{\partial s^2}|K_s|\big|_{s=0} &= \int_0^1 (\dot\eta+\varphi^2\kappa_\sigma)|\sigma_x|\,dx = 0;\\
\frac{\partial^2}{\partial s^2}|\partial_{\Omega}K_s|\big|_{s=0}
&=\int_0^1\frac{(\varphi_x^2)}{|\sigma_x|}\,dx + \kappa_\sigma\int_0^1\dot\eta|\sigma_x|\,dx
-\varphi(0)^2\kappa(\theta_+)-\varphi(1)^2\kappa(\theta_-).
\end{align*}
The first identity gives an expression for $\int_0^1\dot\eta|\sigma_x|\,dx$, which we substitute in the second equation to give
\begin{equation}\label{eq:variation.in.Omega}
\frac{\partial^2}{\partial s^2}|\partial_{\Omega}K_s|\big|_{s=0}=
\int_0^1\frac{(\varphi_x^2)}{|\sigma_x|} - \kappa_\sigma^2\varphi^2|\sigma_x|\,dx
-\varphi(0)^2\kappa(\pi/2+\theta)-\varphi(1)^2\kappa(\pi/2-\theta),
\end{equation}
since $\kappa_+(0)=\pi/2+\theta$ and $\kappa_-(0)=\pi/2-\theta$.

\begin{figure}
\includegraphics[scale=0.65]{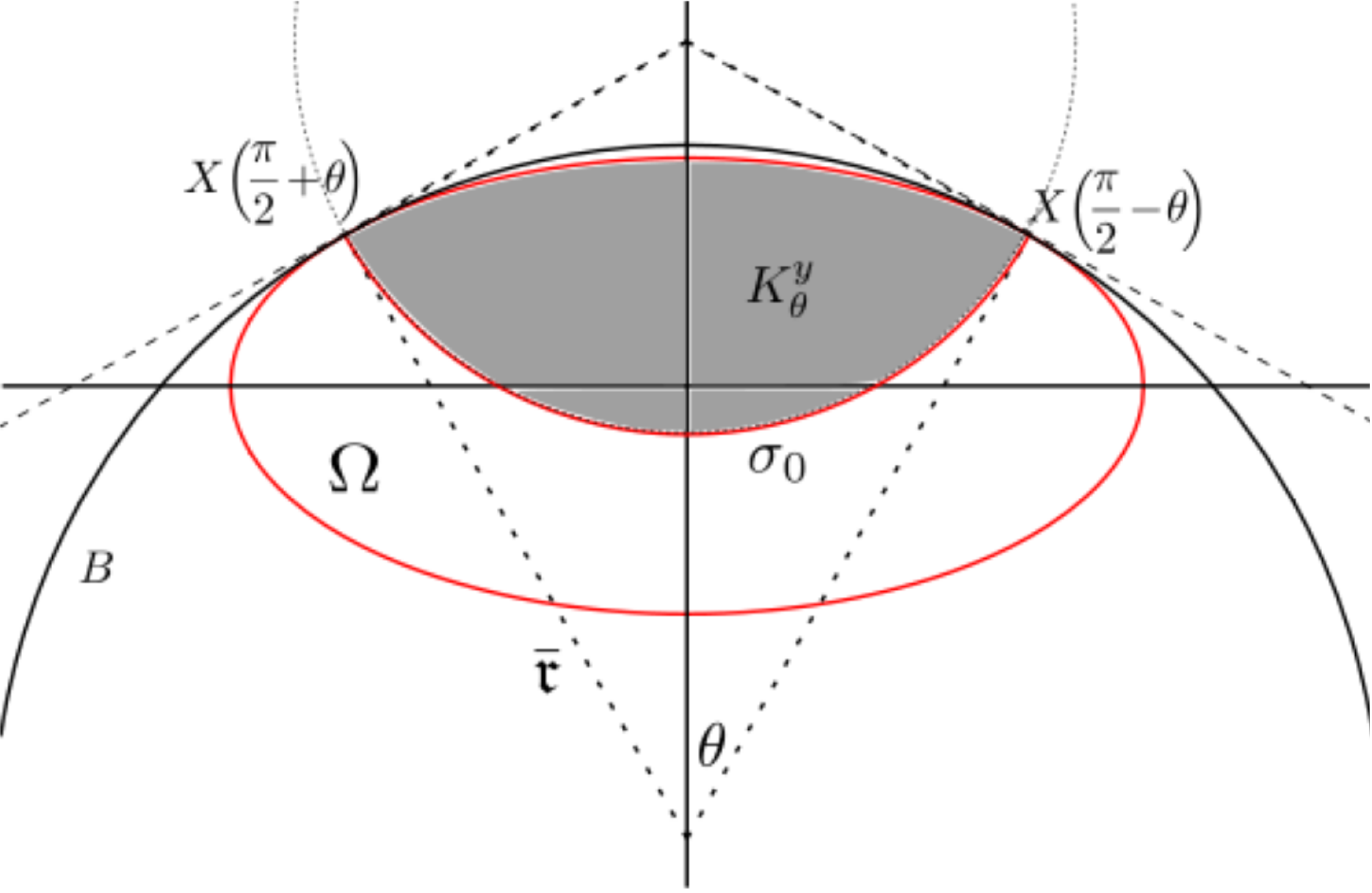}
\caption{A candidate isoperimetric region $K^y_\theta$, given by the intersection with $\Omega$ of a disk with centre on the $y$ axis.  Also shown is a disk $B$ of radius $\bar r=\frac{1}{\bar\curv}$ which meets the same curve orthogonally.}\label{fig:Ky}
\end{figure}

It remains to choose $\varphi$ to make this expression negative.  To do this we note that there is a unique disk $B$ which meets the curve $\sigma_0$ orthogonally at the same pair of endpoints.  By symmetry $B$ has centre on the $y$ axis, and we denote the curvature of $B$ by $\bar\curv$.  Now consider the area-preserving variation corresponding to rotation of the curve $\sigma_0$ about the centre of the circle $B$.  This does not change either the enclosed area or the length in $B$, so for the corresponding function $\varphi$ we have
$$
0=
\int_0^1\frac{(\varphi_x^2)}{|\sigma_x|} - \kappa_\sigma^2\varphi^2|\sigma_x|\,dx
-\varphi(0)^2\bar\curv-\varphi(1)^2\bar\curv.
$$
Substituting this in equation \eqref{eq:variation.in.Omega} then gives a variation in $\Omega$ for which
\begin{align*}
\frac{\partial^2}{\partial s^2}|\partial_{\Omega}K_s|\big|_{s=0}&=\varphi(0)^2\left(\bar\curv-\kappa(\pi/2+\theta\right)+\varphi(1)^2\left(\bar\curv-\curv(\pi/2-\theta)\right)\\
&=2\varphi(0)^2\left(\bar\curv-\kappa(\pi/2+\theta\right)).
\end{align*}
where we used the symmetry in the last equality.  Since $\varphi(0)\neq 0$, it remains only to prove that $\kappa(\pi/2+\theta)>\bar\curv$.

By symmetry it suffices to prove this for $0<\theta\leq\pi/2$.  The point on $\gamma$ with normal direction making angle $\theta$ with the $y$ axis is given by $X(\theta+\pi/2)$, where $X$ is given by equation \eqref{eq:X}.
Note that $\frac{\partial X}{\partial\theta}=(h''+h)i\E^{i\theta}=\rr i\E^{i\theta}$, so integrating we find
$$
X(\pi/2+\theta)=X(\pi/2)+\int_{\pi/2}^{\pi/2+\theta}\rr i\E^{i\theta'}d\theta'.
$$
By symmetry, the $x$ component of $X(\pi/2)$ vanishes, so 
$$
\langle X(\pi/2+\theta),1\rangle=-\int_{\pi/2}^{\pi/2+\theta}\rr \sin(\theta')\,d\theta'.
$$
Now we do the same computation for the circle which meets both $X(\pi/2+\theta)$ and $X(\pi/2-\theta)$ tangentially (i.e. for the boundary of $B$).  Denote the point on this circle with normal direction $\theta$ by $\bar X(\theta)$. By symmetry we have $\bar X(\pi/2)$ on the $y$ axis, and hence the $x$ component of $\bar X(\pi/2+\theta)$ is given by
$$
\langle \bar X(\pi/2+\theta),1\rangle = -\int_{\pi/2}^{\pi/2+\theta}\bar{\rr}\sin(\theta')\,d\theta',
$$
where $\bar\rr$ is the radius of curvature of this circle.
Since $X(\pi/2+\theta)=\bar X(\pi/2+\theta)$, we have
$$
\bar\rr =\frac{\int_{\pi/2}^{\pi/2+\theta}\rr(\theta')\sin(\theta')\,d\theta'}{\int_{\pi/2}^{\pi/2+\theta}\sin(\theta')\,d\theta'}.
$$
By assumption, $\rr(\theta')$ is strictly decreasing on the interval $[\pi/2,\pi/2+\theta]$, so $\rr(\theta')>\rr(\pi/2+\theta)$ for every $\theta'\in[\pi/2,\pi/2+\theta)$.  Therefore we have 
$$
\frac{1}{\bar\curv}=\bar\rr>\rr(\pi/2+\theta)=\frac{1}{\curv(\pi/2+\theta)}
$$
as required.
This completes the proof of Proposition \ref{prop:unstable_y_family}.
\end{proof}

To complete the proof of Theorem \ref{thm:model_isoperim} it remains to check that $K^x_\theta$ has a single boundary curve in $\Omega$ for each $\theta\in(0,\pi)$, and that
for each value of $a\in(0,\pi)$ there is a unique $\theta\in(0,\pi)$ such that $|K^x_\theta|=a$.   This suffices to prove the Theorem, since the result of \cite{SZ} implies that the isoperimetric region is connected and simply connected, and hence must consist either of one of the regions $K^x_\theta$ or the exterior of such a region.

\begin{lemma}\label{lem:inscribed.disk}
For each $\theta\in(0,\pi)$ the disc $B$ centred on the $x$ axis which passes through $X(\theta)$ and $X(-\theta)$ has curvature strictly greater than the curvature of $\gamma$ at $X(\pm\theta)$, and is contained in $\Omega$.
\end{lemma}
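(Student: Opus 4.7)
The plan is to handle the two assertions separately, reading ``disc $B$'' as the inscribed disc: the unique disc with centre $p=(c,0)$ on the $x$-axis whose boundary is tangent to $\gamma$ from inside at both $X(\theta)$ and $X(-\theta)$ (as the label of this lemma indicates). The $y$-axis reflection symmetry of $\Omega$ reduces the general range $\theta\in(0,\pi)$ to the case $\theta\in(0,\pi/2]$.

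For the curvature comparison I adapt directly the computation at the end of the proof of Proposition \ref{prop:unstable_y_family}. Tangency at $X(\theta)$ reads $X(\theta)-p=\bar\rr e^{i\theta}$, so equating imaginary parts and integrating $X'(\theta')=i\rr(\theta')e^{i\theta'}$ from $X(0)=(h(0),0)$ gives
$$
\bar\rr=\frac{\int_0^\theta \rr(\theta')\cos\theta'\,d\theta'}{\int_0^\theta \cos\theta'\,d\theta'},
$$
a weighted average of $\rr$ over $[0,\theta]\subset[0,\pi/2]$ with strictly positive weight. The strict monotonicity of $\rr$ on $[0,\pi/2]$, established in the proof of Lemma \ref{lem:htheta} from the four-vertex hypothesis and the assumption that maximum curvature is attained on the $x$-axis, then forces $\bar\rr<\rr(\theta)$, which is the required curvature inequality.

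For containment I use the support-function description: $B\subseteq\Omega$ iff
$$
G(\theta'):=h(\theta')-c\cos\theta'-\bar\rr\ge 0\qquad\text{for every }\theta',
$$
where $c=-h'(\theta)/\sin\theta$ is determined by the tangency. The tangencies at $\pm\theta$ give $G(\pm\theta)=G'(\pm\theta)=0$ and, by part (1), $G''(\pm\theta)=\rr(\theta)-\bar\rr>0$, so $\pm\theta$ are strict local minima of $G$ with value $0$. The strategy is to show these are the \emph{only} zeros of $G$. A zero $\theta_0'\notin\{\pm\theta\}$ corresponds to an additional same-orientation tangency between $\partial B$ and $\gamma$ at $X(\theta_0')$, imposing both
$$
\bar\rr=\frac{I(\theta_0')}{\sin\theta_0'}\qquad\text{and}\qquad c=-\frac{h'(\theta_0')}{\sin\theta_0'},\qquad I(\theta)=\int_0^\theta\rr(s)\cos s\,ds.
$$
The elementary identity $(I/\sin)'=\cos\theta'(\rr(\theta')-\bar\rr(\theta'))/\sin\theta'$, combined with part (1), shows $I/\sin$ is strictly increasing on $(0,\pi/2]$, ruling out $\theta_0'\in(0,\pi/2)\setminus\{\theta\}$. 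The sign change of $h'$ at $\pi/2$ (from negative on $(0,\pi/2)$ to positive on $(\pi/2,\pi)$) makes $-h'(\theta_0')/\sin\theta_0'<0<c$ for $\theta_0'\in(\pi/2,\pi)$, excluding that range. Finally, the endpoints $\theta_0'\in\{0,\pi\}$ are excluded by verifying $G(0)>0$ and $G(\pi)>0$: writing $G(0)=\int_0^\theta(\rr(s)-\bar\rr)\sin s\,ds$ and using $\int_0^\theta(\rr(s)-\bar\rr)\cos s\,ds=0$ together with the substitution $\sin s=\tan\theta^{*}\cos s+\sin(s-\theta^{*})/\cos\theta^{*}$ at the unique sign change $\theta^{*}$ of $\rr-\bar\rr$ in $(0,\theta)$ turns the integrand into a nonnegative quantity; and $G(\pi)=h(\pi)+c-\bar\rr>0$ follows at once from $c>0$ and $\bar\rr<h(\theta)\le h(0)=h(\pi)$. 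With no extra zeros, continuity of $G$ and the strict local-minimum property at $\pm\theta$ jointly force $G\ge 0$ everywhere, hence $B\subseteq\Omega$.

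The main obstacle is the final step of part (2). Because $\rr-\bar\rr$ changes sign on $[0,\theta]$, the natural integral representation $G(\theta')=\int_\theta^{\theta'}(\rr-\bar\rr)\sin(\theta'-s)\,ds$ is not sign-definite and a direct ODE argument from $G''+G=\rr-\bar\rr$ does not close. Converting global nonnegativity of $G$ into the enumeration of same-orientation tangencies between $\partial B$ and $\gamma$, which the monotonicity of $I/\sin$ and the sign of $h'/\sin$ then handle cleanly, is the step that makes the argument go through.
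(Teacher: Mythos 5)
Your curvature comparison is correct and is essentially the paper's computation: both integrate $y'=\rr\cos\theta'$, use the symmetry $y(0)=0$, and express $\bar\rr$ as a $\cos$-weighted average of $\rr$ over $[0,\theta]$, then invoke the strict monotonicity of $\rr$.

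The containment argument, however, has a genuine gap at its load-bearing step. You reduce $B\subseteq\Omega$ to $G:=h-c\cos\theta'-\bar\rr\ge 0$ and argue that any zero $\theta_0'\notin\{\pm\theta\}$ of $G$ ``corresponds to an additional same-orientation tangency between $\partial B$ and $\gamma$,'' from which you extract $\bar\rr=I(\theta_0')/\sin\theta_0'$ and $c=-h'(\theta_0')/\sin\theta_0'$. But $G(\theta_0')=0$ only says the supporting lines of $\Omega$ and $B$ in direction $e^{i\theta_0'}$ coincide; it does \emph{not} say that $X(\theta_0')$ and the corresponding boundary point $c+\bar\rr e^{i\theta_0'}$ of $B$ agree. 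Tangency (the two equations you write down) is the conjunction $G(\theta_0')=0$ \emph{and} $G'(\theta_0')=0$, i.e.\ a double zero. A simple zero of $G$ need not be a critical point, so it carries neither of your two conditions, and your enumeration therefore does not rule out the possibility that $G$ crosses zero transversally and goes negative. Equivalently: if you already knew $G\ge 0$, every zero would automatically be a local minimum with $G'=0$ — but that is exactly what you are trying to prove, so assuming it is circular. The paper avoids this issue entirely by placing the origin at the centre of $B$, reducing containment to $h\ge\bar\rr$, and verifying it by explicit representation formulas for $v=h'$ on $[0,\theta]$ and $w=h-\bar\rr$ on $[\theta,\pi/2]$ (with $v''+v=\rr'>0$ and $w''+w=\rr-\bar\rr>0$ respectively), then using the sign of $x(\pi/2)=-w'(\pi/2)$ and the $y$-axis symmetry to cover $(\pi/2,3\pi/2)$.

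Your framework is recoverable if you argue from an interior negative minimum rather than from a putative zero: if $G<0$ somewhere, then $G$ attains a negative minimum at some $\theta_0'\notin\{\pm\theta\}$ with $G'(\theta_0')=0$, and only the single condition $c=-h'(\theta_0')/\sin\theta_0'$ is available. Your sign argument already kills $\theta_0'\in(\pi/2,\pi)$ (where $-h'/\sin<0<c$), and your endpoint computations $G(0)>0$, $G(\pi)>0$ (which are both correct, and the $\theta^{*}$-substitution for $G(0)$ is a nice touch) dispose of $\theta_0'\in\{0,\pi\}$; but on $(0,\pi/2)\setminus\{\theta\}$ the monotonicity you actually need is that of $h'/\sin\theta'$ — \emph{not} $I/\sin\theta'$, since the latter is tied to the tangency equation $G=0$ which fails at a strictly negative minimum. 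The monotonicity of $h'/\sin\theta'$ does hold ($(h'/\sin)'\sin^2=-P$ with $P$ as in the proof of Lemma~\ref{lem:htheta}, and $P<0$ on $(0,\pi/2]$), but this requires its own observation and is not what you wrote.
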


\begin{proof}
We first show the inequality between the curvatures.  
By assumption, the point of maximum curvature (hence minimum $\rr$) is at $\theta=0$, and we have $\rr$ strictly increasing on the interval $(0,\pi/2)$.
Choose the origin to be at the centre $c$ of the ball $B$, and let $h$ be the support function.
From equation \eqref{eq:X} we have $X'(\phi) = i\rr\E^{i\phi}$, so the vertical component $y$ satisfies $y'(\phi) = \rr(\phi)\cos\phi$.  Since $y(0)=0$ by symmetry, we have $y(\theta) = \int_0^\theta \rr(\phi)\cos\phi\,d\phi<\rr(\theta)\int_0^\theta\cos\phi\,d\phi$.  Now the ball $B$ also has $y$ coordinate $\bar y(0)=0$ and $\bar y'(\phi) = \bar{\rr}\cos\phi$, and by assumption $\bar y(\theta)=y(\theta)$, so we have
$$
\bar{\rr}\int_0^\theta\cos\phi\,d\phi = \bar y(\theta)=y(\theta)<\rr(\theta)\int_0^\theta\cos\phi\,d\phi,
$$
from which it follows that $\rr(\theta)>\bar{\rr}$.

Next we show that the ball $B$ is inscribed.  We prove this only for $\theta\in(0,\pi/2)$, since the result for $\theta>\pi/2$ follows by symmetry, and for $\theta=\pi/2$ by continuity.
It suffices to show that $h\geq\bar{\rr}$ everywhere. We prove this first on the interval $[0,\theta]$:
Set $v=h'$, and $q=\rr'>0$.  From equation \eqref{eq:X} we note that $X(0) = (h(0),h'(0))$ lies on the $x$ axis, so $v(0)=h'(0)=0$.  Also, by our choice of origin ${\bar\rr}\E^{i\theta}=X(\theta)=h(\theta)\E^{i\theta}+ih'(\theta)\E^{i\theta}$, so $v(\theta)=h'(\theta)=0$ and $h(\theta)=\bar{\rr}$.  We can also write $v''+v=q>0$.   It follows that $v<0$ on $[0,\theta]$:  For example we can use the representation formula
$$
v(\phi) = -\frac{\sin\phi}{\sin\theta}\int_{\phi}^\theta\sin(\theta-\alpha)\,d\alpha
-\frac{\sin(\theta-\phi)}{\sin\theta}\int_0^\phi\sin\alpha\,d\alpha<0
$$
for $0<\phi<\theta$.  Therefore we have $h(\phi)=h(\theta)-\int_\phi^\theta h'(\alpha)\,d\alpha>h(\theta)=\bar{\rr}$ for $0\leq \phi<\theta$.  By symmetry the same holds for $-\theta<\phi\leq 0$.

Now on the interval $(\theta,\pi/2]$ we have $\rr(\phi)>\rr(\theta)$, so the function $w=h-\bar{\rr}$ satisfies 
$w(0)=0$, $w'(0)=0$ and $f=w''+w>0$.  Therefore 
$$
w(\phi) = \int_\theta^\phi\sin(\phi-\alpha)f(\alpha)\,d\alpha>0,
$$
so that $h(\phi) = w(\phi)+\bar{\rr}>\bar{\rr}$ for $\theta<\phi\leq\pi/2$, and by symmetry we now have
$h\geq\bar{\rr}$ on $[-\pi/2,\pi/2]$, with a strict inequality except at $\pm\theta$.  Also, we have
$$
w'(\phi) = \int_\theta^\phi\cos(\phi-\alpha)f(\alpha)\,d\alpha>0,
$$
Thus in particular $x(\pi/2)=-w'(\pi/2)<0$.  The reflection symmetry implies that $y(\pi-\phi) = y(\phi)$ and
$x(\pi-\phi)-x(\pi/2)=-x(\phi)-x(\pi/2)$, so $x(\pi-\phi)=-x(\phi)+2x(\pi/2)<-x(\phi)$.  Finally, for $\phi\in(-\pi/2,\pi/2)$ we have 
\begin{align*}
h(\pi+\phi) 
&= x(\pi+\phi)\cos(\pi+\phi)+y(\pi+\phi)\sin(\pi+\phi)\\
&= -(2x(\pi/2)-x(-\phi))\cos\phi+y(\phi)\sin\phi\\
&=h(\phi)-2x(\pi/2)\cos\phi\\
&>\bar{\rr}.
\end{align*}
Thus we have $h\geq{\bar\rr}$ everywhere, so the ball $B$ is inscribed in $\Omega$.
\end{proof}

It follows that the boundary $\partial_\Omega K^x_\theta$ consists of a single arc from $X(\theta)$ to $X(-\theta)$, since two circles cannot meet at three points unless they are identical.  It remains only to show that the area is monotone along this family.  

\begin{figure}
\includegraphics[scale=0.65]{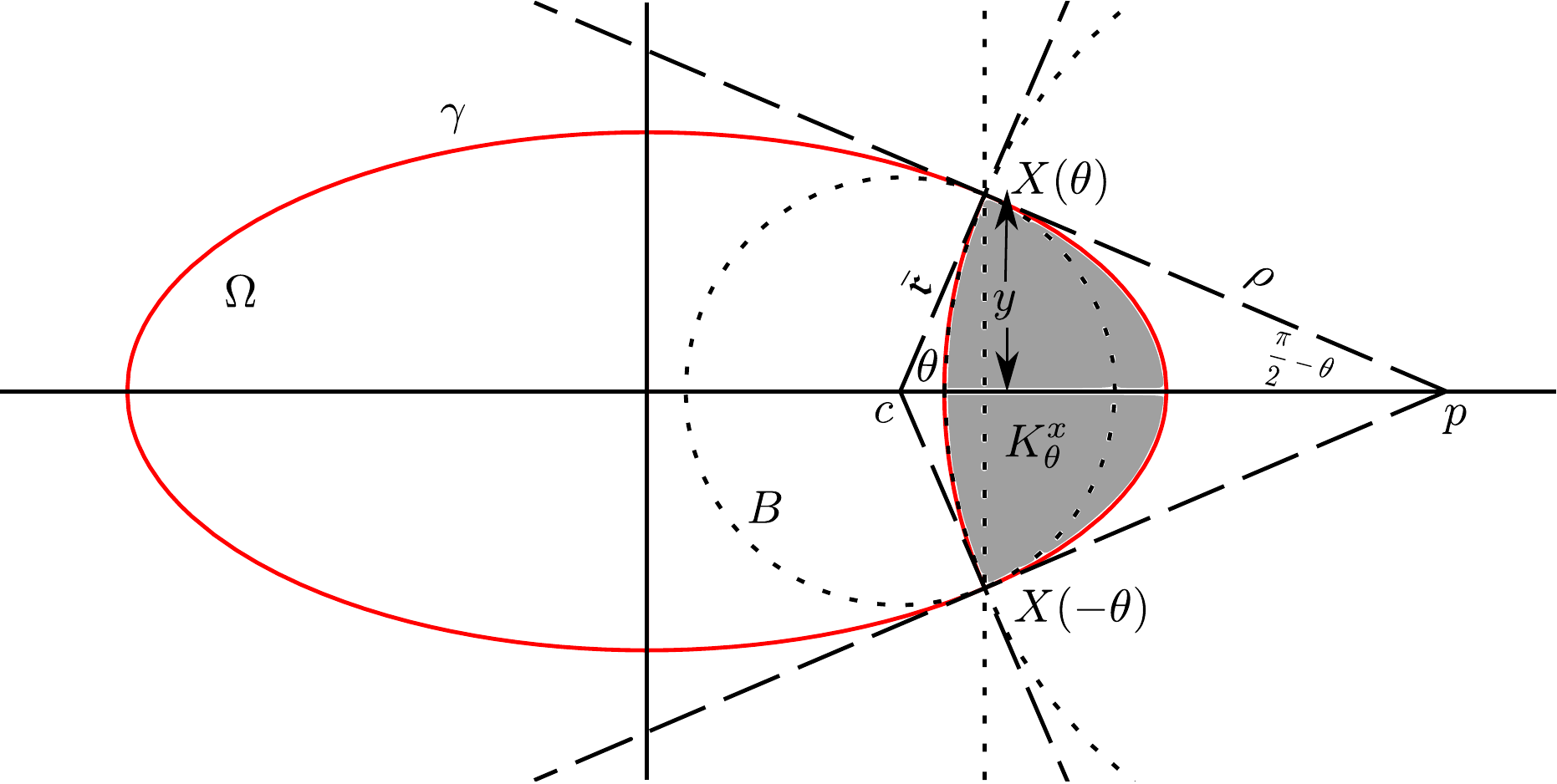}
\caption{Construction of the region $K^x_\theta$ by intersecting $\Omega$ with a disk of radius $\rho$ centred at $p$, showing the inscribed disk $B$ of radius $\bar{\rr}$. }\label{fig:disk.construction}
\end{figure}

We assume initially that $\theta\in(0,\pi/2)$.  Then
the radius of curvature $\rho$ of the boundary curve of $K^x_\theta$ is given by $\rho=\frac{y}{\cos\theta}$, where $y=\langle X(\theta),i\rangle$ is the distance of $X(\theta)$ from the $x$ axis.  Noting that $\partial_\theta X = i\rr\E^{i\theta}$, we have $\partial_\theta y = \langle i\rr\E^{i\theta},i\rangle = \rr\cos\theta$, where $\rr$ is the radius of curvature of $\gamma$ at $X(\theta)$.  From this we obtain the following expression for the rate of change of the radius of curvature $\rho$ of the boundary as $\theta$ varies:
$$
\partial_\theta\rho = \partial_\theta\left(\frac{y}{\cos\theta}\right) = \frac{\rr\cos\theta}{\cos\theta}+\frac{y\sin\theta}{\cos^2\theta} = \rr+\rho\tan\theta.
$$

An expression for the area of $K^x_\theta$ can be computed as follows:  We compute the area of the sector of the disk of radius $\rho$ and angle $\pi-2\theta$, subtract the area of the triangle subtended by $p$, $X(\theta)$ and $X(-\theta)$, and add the area between $\gamma$ and the line from $X(\theta)$ to $X(-\theta)$:  This gives (assuming $\theta\in(0,\pi/2)$)
$$
|K^x_\theta| = \left(\frac\pi2-\theta\right)\rho^2-\rho^2\sin\theta\cos\theta+\int_0^\theta (X(\theta')-X(-\theta'))\times X_\theta(\theta')\,d\theta'.
$$
Differentiating with respect to $\theta$, we find:
\begin{align*}
\partial_\theta\left|K^x_\theta\right|&=-\rho^2+(\pi-2\theta)\rho(\rr+\rho\tan\theta)-\rho^2(\cos^2\theta-\sin^2\theta)\\
&\quad\null-2\rho\sin\theta\cos\theta(\rr+\rho\tan\theta)
+ (X(\theta)-X(-\theta))\times\rr i\E^{i\theta}\\
&=\rho^2\left(-2+(\pi-2\theta)\tan\theta\right)+\rr\rho\left((\pi-2\theta)-2\sin\theta\cos\theta\right)\\
&\quad\null +2\rr\left[\begin{array}{c}0 \\y\end{array}\right]\times \left[\begin{array}{c}-\sin\theta \\\cos\theta\end{array}\right]\\
&=\rho^2(-2+(\pi-2\theta)\tan\theta)+\rr\rho(\pi-2\theta).
\end{align*}
Now we use the result of Lemma \ref{lem:inscribed.disk} which gives $\rr>\bar\rr=\frac{\rho}{\tan\theta}$, so that
\begin{align*}
\partial_\theta\left|K^x_\theta\right|&>\rho^2\left(-2+(\pi-2\theta)\left(\tan\theta+\frac{1}{\tan\theta}\right)\right).\\
&=\rho^2\left(-2+\frac{\pi-2\theta}{\sin\theta\cos\theta}\right)\\
&=\frac{2L^2}{z^2\sin z}\left(z-\sin z\right),
\end{align*}
where $z=\pi-2\theta$ and $L=|\partial_\Omega K^x_\theta|$ is the length of the boundary curve, and we used the identity $z\rho=L$. The right-hand side is strictly positive for $z\in(0,\pi)$, and has limit $L^2/3$ as $z\to 0$.  It follows that $\partial_\theta A$ is strictly positive for $\theta\in(0,\pi/2]$, and by symmetry the same is true for $\theta\in[\pi/2,\pi)$.
\end{proof}

\begin{remark}
Although we do not need it here, one can prove that the family $K^x_\theta$ is increasing in $\theta$, and in fact one can construct a smooth embedding $\sigma$ from $(0,1)\times(0,\pi)$ to the interior of $\Omega$ such that $K^x_\theta=\sigma((0,1)\times(0,\theta)$ and $\partial_\theta\sigma = \eta{\bf n}$, so that $\sigma$ varies in the normal direction everywhere.
\end{remark}

\section{The equality case and model solutions}\label{sec:equality}

In this section we demonstrate a correspondence between solutions of the comparison equation 
arising in Theorem \ref{thm:ODEcomparison}, 
\begin{equation}\label{eq:comparison.equation}
\frac{\partial f}{\partial t}=-f^{-1}{\mathcal F}[ff',f^3f'']+f+f'(\pi-2a)-f(f')^2
\end{equation}
and certain solutions of the normalized curve-shortening flow.  Note that by the expression \eqref{eq:expr.F}, equation \eqref{eq:comparison.equation} is a strictly parabolic fully nonlinear equation for $f$ in the region where ${\mathcal F}[ff',f^3f'']>0$.  

Most important for our purposes is the following method of constructing solutions:

\begin{theorem}\label{thm:model.to.solution}
Let $\Omega_0$ be a compact convex subset of $\RR^2$, symmetric in both coordinate axes and with smooth boundary curve $\gamma_0$ given by the image of a smooth embedding $X_0:\ S^1\to\RR^2$ and having exactly four vertices, with the maxima of curvature located on the $x$ axis.  
Let $X:\ S^1\times[0,T)\to\RR^2$ be the solution of \eqref{eq:NCSF} with initial data $X_0$.  Then for each $t\in[0,T)$, the region $\Omega_t$ enclosed by $\gamma_t=X(S^1,t)$ is a compact convex region symmetric in both coordinate axes, with exactly four vertices and with the maxima of curvature located on the $x$ axis.   For each $t$, let $K_{a,t}$ be the family of isoperimetric regions for $\Omega_t$ constructed in Theorem \ref{thm:model_isoperim}, and define $f(a,t)=\left|\partial_{\Omega_t}K_{a,t}\right|$.  Then $f:\ (0,\pi)\times[0,T)\to\RR$ is a symmetric concave solution of the equation \eqref{eq:comparison.equation}
with $\lim_{a\to 0}\frac{f(a,t)}{\sqrt{2\pi a}}=1$ and ${\mathcal F}[ff',f^3f'']>0$.
\end{theorem}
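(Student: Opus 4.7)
The plan is to proceed in three stages: preservation of geometric structure under the flow, identification of the isoperimetric family giving $f$, and derivation of the PDE as the equality case of the arguments in Theorem \ref{thm:ODEcomparison}.

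First I would verify that the normalized flow \eqref{eq:NCSF} preserves the geometric hypotheses of Theorem \ref{thm:model_isoperim}: uniform convexity, reflection symmetry in both coordinate axes, and exactly four vertices with the maxima of curvature on the $x$-axis. Convexity is preserved by the Gage--Hamilton theorem. The two reflection symmetries follow from uniqueness of the flow, since the reflected solution coincides with the original. For the four-vertex structure I would parametrize the convex $\gamma_t$ by the tangent angle $\theta$ and derive a linear parabolic equation for $\curv_\theta$, then apply the Angenent--Sturm theorem: the number of sign changes of $\curv_\theta$ is non-increasing in time. Combined with reflection symmetry (which forces critical points of $\curv$ at $\theta=0,\pi/2,\pi,3\pi/2$) and the classical four-vertex theorem, this gives exactly four vertices for every $t$, with the maxima remaining on the $x$-axis by continuity.

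With this structure preserved, Theorem \ref{thm:model_isoperim} yields at each $t$ a smooth map $a\mapsto K^x_{\theta(a,t)}$ between $(0,\pi)$ and the family of isoperimetric regions, inverting the strictly monotone smooth map $\theta\mapsto|K^x_\theta|$. Smoothness in $t$ follows from smoothness of the flow and an implicit function argument applied to the area constraint, so $f(a,t):=|\partial_{\Omega_t}K^x_{\theta(a,t)}|$ is smooth on $(0,\pi)\times[0,T)$. The asymptotic $\lim_{a\to0}f(a,t)/\sqrt{2\pi a}=1$ follows from Proposition \ref{prop:asympt.isoperim.profile}. The symmetry $f(\pi-a,t)=f(a,t)$ holds because $\Omega_t\setminus\overline{K^x_{\theta(a,t)}}$ has the same boundary arc in $\Omega_t$ and area $\pi-a$. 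Concavity in $a$ follows either from the classical concavity of the isoperimetric profile on a simply connected planar domain, or directly from the fact that $f'(a,t)$ equals the constant curvature $\curv_\sigma$ of the boundary arc, which is strictly decreasing along the family.

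The main step is to show $f$ satisfies the PDE \eqref{eq:comparison.equation}. The strategy is to revisit the inequality chain
$$
-\partial_t f+f+f'(\pi-2a)-f(f')^2\;\le\;\curv(u_-)+\curv(u_+)\;\le\;\frac{1}{f}{\mathcal F}[ff',f^3f'']
$$
from the proof of Theorem \ref{thm:ODEcomparison}, and show that both inequalities saturate. For the left inequality I would take the time-dependent family $K_t:=K^x_{\theta(a,t)}$ of isoperimetric regions of constant area $a$: since $|\partial_{\Omega_t}K_t|\equiv f(a,t)$ and $|K_t|\equiv a$, the function $t\mapsto|\partial_{\Omega_t}K_t|-f(|K_t|,t)$ vanishes identically, upgrading Lemma \ref{lem:time-var-ineq} to an equality. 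For the right inequality I would use the spatial family $\{K^x_{\theta'}\}$ at fixed $t$, whose $\theta$-variation has normal component $\varphi_\theta$ along $\partial_\Omega K^x_\theta$; since $|\partial K^x_{\theta'}|-f(|K^x_{\theta'}|,t)\equiv 0$ as $\theta'$ varies, the second variation in Lemma \ref{lem:second-var-ineq} vanishes for $\varphi_\theta$. By symmetry $\varphi_\theta(0)=\varphi_\theta(1)$, so after rescaling to $1$ at the endpoints, $\varphi_\theta$ satisfies the Euler--Lagrange equation $\varphi_{xx}+(ff')^2\varphi+f^3f''\int_0^1\varphi\,dx=0$ of the variational problem defining ${\mathcal F}[ff',f^3f'']$, hence realizes the infimum. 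Combining the two equalities yields \eqref{eq:comparison.equation}, and ${\mathcal F}[ff',f^3f'']>0$ then follows from the explicit formula \eqref{eq:expr.F} together with the non-degenerate interior character of the minimizer. I expect the principal technical obstacle to lie in this last identification: reconciling the coefficients appearing in the Jacobi equation for constant-curvature arcs meeting $\gamma$ orthogonally with the specific coefficients $(ff')^2$ and $f^3f''$ in ${\mathcal F}$, and verifying that $\varphi_\theta$ is the minimizer rather than merely a critical point of the quadratic form.
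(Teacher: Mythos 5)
Your proposal follows essentially the same route as the paper: preservation of convexity from Gage--Hamilton, symmetry from uniqueness, the four-vertex count from Sturm--Angenent zero counting plus the four-vertex theorem, identification of $f$ with the isoperimetric profile via Theorem~\ref{thm:model_isoperim}, and the equality case of the first/second/time variation lemmas to derive \eqref{eq:comparison.equation}. One correction to the final step, which is also where you flag a ``principal technical obstacle'' that in fact does not arise. You do not need to verify that the transplanted variation field $\varphi_\theta$ realizes the infimum defining ${\mathcal F}$, and the route you propose (show $\varphi_\theta$ satisfies the Euler--Lagrange equation, ``hence realizes the infimum'') is not valid reasoning, since a critical point of a quadratic functional need not be a minimizer. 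The clean argument is one line: because the family $\{K^x_{\theta'}\}$ satisfies $|\partial_{\Omega_t}K^x_{\theta'}|-f(|K^x_{\theta'}|,t)\equiv 0$, the second-variation inequality of Lemma~\ref{lem:second-var-ineq} becomes an equality for the corresponding normalized $\varphi_\theta$ (which has $\varphi_\theta(0)=\varphi_\theta(1)=1$), so
\[
f\bigl(\curv(u_-)+\curv(u_+)\bigr)=\int_0^1(\varphi_\theta)_x^2\,dx-(ff')^2\!\int_0^1\varphi_\theta^2\,dx-f^3f''\left(\int_0^1\varphi_\theta\,dx\right)^{\!2}\ \geq\ {\mathcal F}[ff',f^3f''],
\]
simply because $\varphi_\theta$ is an admissible test function in the infimum. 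Combined with the opposite inequality $\curv(u_-)+\curv(u_+)\leq \frac{1}{f}{\mathcal F}[ff',f^3f'']$ already obtained by minimizing in Lemma~\ref{lem:second-var-ineq}, this forces equality; no reconciliation of Jacobi coefficients and no minimality verification is required. With this adjustment your argument is complete and matches the paper's.
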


\begin{proof}
The symmetry of $\Omega_t$
follows from the geometric invariance and uniquess of solutions, and preservation of convexity was proved in \cite{GH}.  The result of \cite{Ang.zero.count} implies that the number of critical points of curvature cannot increase, and the four-vertex theorem implies there are always at least four vertices, so there are always exactly four vertices for $t>0$.  The symmetry implies that these are located on the axes, and the maxima of curvature therefore remain on the $x$ axis.  It follows from Theorem \ref{thm:model_isoperim} that $f(a,t)$ is the isoperimetric profile of $\Omega_t$ for each $t$.  The symmetry of $f$ is immediate from the symmetry of $\Omega_t$ and the definition of $f$ (i.e. we have $f(a,t)=f(\pi-a,t)$).
The concavity of $f$ is proved in \cite{SZ} (in fact it was proved in \cite{Kuwert} that $f^2$ is also concave --- this can be deduced directly by substituting $\varphi=1$ in the second variation inequality \eqref{eq:sec.var.ineq.model} below and using the convexity of $\Omega_t$).
 It remains to show that  $f$ satisfies equation \eqref{eq:comparison.equation}.

For any fixed $t$, along the family $\{K_{a,t}\}$ we have $|\partial_{\Omega_t}K_{a,t}|=f(|K_{a,t}|,t)$, while for all regions we have $|\partial_{\Omega_t}K|\geq f(|K|,t)$.  It follows from Lemma \ref{lem:first-var} that $\curv_\sigma=f'$, where $\sigma$ is the curvature of the boundary curve $\sigma$ of $K_{a,t}$.  By Lemma \ref{lem:second-var-ineq} the second variation inequality holds, i.e.
\begin{equation}\label{eq:sec.var.ineq.model}
\curv(u_-)\varphi(1)^2+\curv(u_+)\varphi(0)^2\leq \frac{1}{f}\int_0^1\varphi_x^2\,dx-f(f')^2\int_0^1
\varphi^2\,dx -f^2f''\left(\int_0^1\varphi\,dx\right)^2.
\end{equation}
On the other hand, for the particular choice of $\varphi$ corresponding to moving through the family $\{K_{a,t}\}$ in such a way that the endpoints of the boundary curve move with unit speed, we have equality in  the above inequality, and $\varphi(1)=\varphi(0)=1$.  Therefore by the definition of ${\mathcal F}$, 
\begin{equation}\label{eq:sec.var.equal}
\curv(u_-)+\curv(u_+)=\frac{1}{f}{\mathcal F}(ff',f^3f'').
\end{equation}
Now consider the family of regions $\{K_{a,t}\}$ for fixed $a$, as $t$ varies.  The proof of Lemma \ref{lem:time-var-ineq} gives that
\begin{align}
0&= \partial_t\left(|\partial_{\Omega_t}K_t|-f(|K_t|,t)\right)\big|_{t=t_0}\notag\\
&=f-\curv(u_-)-\curv(u_+)+f'(\pi-2|K|)-f(f')^2-\frac{\partial f}{\partial t}\label{eq:time.var.equal}
\end{align}
Combining equations \eqref{eq:sec.var.equal} and \eqref{eq:time.var.equal}, we deduce that 
\eqref{eq:comparison.equation} holds.
\end{proof}

\begin{corollary}\label{cor:CSF.comparison}
Let $\{\Omega_t:\ 0\leq t<T\}$ be any smooth compact embedded solution of the normalized curve shortening flow \eqref{eq:NCSF}, and let $\{\Theta_t:\ 0\leq t<T\}$ be any solution of \eqref{eq:NCSF} for which $\Theta_0$ is a smoothly bounded compact convex region with reflection symmetries in both coordinate axes and exactly four vertices, such that $\Psi(\Omega_0,a)\geq\Psi(\Theta_0,a)$ for every $a\in(0,\pi)$.  Then
$\Psi(\Omega_t,a)\geq\Psi(\Theta_t,a)$ for all $a\in(0,\pi)$ and all $t\in[0,T)$.
\end{corollary}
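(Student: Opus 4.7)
The plan is to apply Theorem~\ref{thm:ODEcomparison} to a family of strict subsolutions of the comparison equation \eqref{eq:comparison.equation} that approximate the function $\phi(a,t):=\Psi(\Theta_t,a)$ from below, and then let the perturbation parameter tend to zero.  By Theorem~\ref{thm:model.to.solution}, this $\phi$ is smooth, symmetric and concave in $a$, satisfies \eqref{eq:comparison.equation} with equality, has ${\mathcal F}[\phi\phi',\phi^3\phi'']>0$, and satisfies $\lim_{a\to 0}\phi(a,t)/\sqrt{2\pi a}=1$; in other words every structural hypothesis of Theorem~\ref{thm:ODEcomparison} is already in force for $\phi$, and only the strict differential inequality is missing — this is precisely what the perturbation is designed to supply.

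For parameters $\epsilon>0$ and $K>0$ to be chosen, I set $f_\epsilon(a,t):=\phi(a,t)-\epsilon e^{Kt}$.  Since the perturbation is constant in $a$, $f_\epsilon$ inherits the concavity and symmetry of $\phi$, and $\limsup_{a\to 0^+}f_\epsilon(a,t)/\sqrt{2\pi a}=-\infty<1$.  The hypothesis $\Psi(\Omega_0,a)\ge\phi(a,0)$ immediately yields the strict initial inequality $\Psi(\Omega_0,a)>f_\epsilon(a,0)=\phi(a,0)-\epsilon$ for every $a\in(0,\pi)$.  The heart of the argument is to verify the strict differential inequality.  Writing ${\mathcal R}[f]$ for the right-hand side of \eqref{eq:comparison.equation} and using $\partial_t\phi={\mathcal R}[\phi]$, a first-order Taylor expansion of the fully nonlinear operator in the perturbation gives
$$
\partial_t f_\epsilon-{\mathcal R}[f_\epsilon]=\epsilon e^{Kt}\left(\frac{\partial{\mathcal R}}{\partial f}\bigg|_\phi-K\right)+O(\epsilon^2).
$$
On any compact subset of $(0,\pi)\times[0,T)$, the function $\phi$ and its $a$-derivatives are bounded with $\phi$ bounded away from zero, so $\partial{\mathcal R}/\partial f|_\phi$ is bounded; choosing $K$ strictly larger than its supremum and then taking $\epsilon$ small enough makes $f_\epsilon$ a strict subsolution on that set.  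Theorem~\ref{thm:ODEcomparison} then gives $\Psi(\Omega_t,a)>f_\epsilon(a,t)$, and letting $\epsilon\to 0$ yields the required non-strict comparison.

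The main obstacle is that $\partial{\mathcal R}/\partial f|_\phi$ involves terms such as $\phi^{-2}{\mathcal F}[\phi\phi',\phi^3\phi'']$, which need not be bounded up to the endpoints $a\in\{0,\pi\}$ where $\phi$ vanishes and $\phi',\phi''$ diverge.  I will handle this by running the above argument on $[\delta,\pi-\delta]\times[0,T']$ with $\delta>0$ and $T'<T$ arbitrary, and recovering the comparison near the endpoints directly from Proposition~\ref{prop:asympt.isoperim.profile}: there the asymptotic expansion reduces the inequality $\Psi(\Omega_t,a)\ge\phi(a,t)$ for small $a$ to the inequality $\sup_{\partial\Omega_t}\curv\le\sup_{\partial\Theta_t}\curv$ (and similarly at $a\to\pi$), and this is preserved along \eqref{eq:NCSF} by a standard maximum-principle argument on the curvature evolution equation.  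Combining the interior comparison with this boundary control, and noting that $\delta$ and $T'$ were arbitrary, yields $\Psi(\Omega_t,a)\ge\Psi(\Theta_t,a)$ on all of $(0,\pi)\times[0,T)$.
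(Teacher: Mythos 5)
The first half of your argument is sound as far as it goes: subtracting a constant in $a$ does preserve concavity and symmetry, forces the boundary ratio to $-\infty < 1$, and on a compact subset of $(0,\pi)\times[0,T)$ (where $\phi$ is bounded away from zero and its derivatives are bounded) the exponential factor can absorb the linearization of $\mathcal R$.  You correctly identify that the obstruction is the degeneracy at the endpoints, where $\phi\to0$ and the term $-f^{-1}\mathcal F[ff',f^3f'']$ in $\mathcal R$ forces $\partial\mathcal R/\partial f|_\phi \sim \phi^{-2}\mathcal F \sim \phi^{-1}\bigl(\curv(u_-)+\curv(u_+)\bigr)\to\infty$.  But the patch you propose for the endpoints does not close the gap, for two reasons.

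First, it is circular.  You propose to recover the comparison near $a\in\{0,\pi\}$ by reducing it, via Proposition~\ref{prop:asympt.isoperim.profile}, to the curvature comparison $\sup_{\partial\Omega_t}\curv\le\sup_{\partial\Theta_t}\curv$.  But that curvature comparison is precisely the content of Corollary~\ref{cor:curv.bound}, which is deduced \emph{from} the present corollary; it cannot be assumed as an input.  Second, and independently, the claim that this curvature comparison is ``preserved along \eqref{eq:NCSF} by a standard maximum-principle argument'' is false.  The maximum principle gives $\frac{d}{dt}\sup\curv(\Omega_t)\le (\sup\curv)^3-\sup\curv$ and likewise $\frac{d}{dt}\sup\curv(\Theta_t)\le (\sup\curv)^3-\sup\curv$; both inequalities run in the same direction, so they give no comparison between the two families.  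Indeed, were such a comparison available by a standard maximum principle, most of this paper (and the curvature estimate it establishes) would be superfluous.  Finally, even granting the endpoint comparison, one would still need to glue the ``interior'' and ``endpoint'' regimes into a single strict subsolution on all of $(0,\pi)$ to which Theorem~\ref{thm:ODEcomparison} applies, and this matching is not supplied.

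The paper avoids the endpoint degeneracy in a structurally different way.  Working with $v=\tfrac12 f^2$, it rescales about $\pi/2$ in both space and time via $v_\eps(a,t)=\mu\,v\bigl(\pi/2+\mu^{-1}(a-\pi/2),\tau\bigr)$ with $\mu=1-\eps e^{Ct}$; this perturbation satisfies $v_\eps'=v'$ and $v_\eps v_\eps''=vv''$, so $\mathcal G[v_\eps]=\mathcal G[v]$ exactly, and no linearization of the singular operator is ever needed.  The price is that $v_\eps$ lives on a slightly shorter interval; this is repaired by taking the concave hull, i.e.\ extending $v_\eps$ linearly near the endpoints, where $\mathcal G$ of an affine function vanishes so the strict inequality is cheap to verify there.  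If you want to rescue the additive-barrier route you would need a replacement for the linearization near $a\in\{0,\pi\}$ that does not presuppose the curvature comparison; the paper's scale-invariance trick and concave-hull extension are exactly the ingredients that do this.
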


\begin{proof}
Let $f:\ [0,\pi]\times[0,T)\to\RR$ be as in Theorem \ref{thm:model.to.solution}.  Under the assumption $\Psi(a,0)\geq f(a,0)$, we will construct a family of functions $f_\eps$ satisfying the assumptions of Theorem \ref{thm:ODEcomparison} such that $\lim_{\eps\to 0}f_\eps=f$.  That is, we need $f_\eps(a,0)<f(a,0)$, $\limsup_{a\to 0}\frac{f_\eps(a,t)}{\sqrt{2\pi a}}<1$, and $f_\eps$ should satisfy the strict differential inequality in Theorem \ref{thm:ODEcomparison}.

It is convenient to work with the function $v(a,t)=\frac12 f(a,t)^2$ instead of $f$.  Equation 
\eqref{eq:comparison.equation} then becomes 
$$
\frac{\partial v}{\partial t} = {\mathcal G}[v]+2v+v'(\pi-2a)-(v')^2,
$$
where
$$
{\mathcal G}[v] = -{\mathcal F}[ff',f^3f''] = \left(\min\left\{0,\frac{1}{2vv''}-\frac{1}{(v')^2}+\frac{\cos(v'/2)}{2v'\sin(v'/2)}\right\}\right)^{-1}.
$$
Furthermore we know that $v$ is strictly concave by the result of \cite{Kuwert}, and has $|v'(a)|<\pi$ for $a\in(0,\pi)$ by combining the strict concavity with the result of Proposition \ref{prop:asympt.isoperim.profile}.

We accomplish the construction in two stages:  First, we construct strictly concave solutions of the strict differential inequality on slightly smaller domains:
Fix $C>2$, and set $\mu=1-\eps\E^{Ct}$ and $\tau = \int_0^t\mu^{-1}(t')\,dt'$, and define
$$
v_\eps(a,t) = \mu v\left(\pi/2+\mu^{-1}(a-\pi/2),\tau\right),
$$
for $\eps\E^{Ct}\leq a\leq \pi-\eps\E^{Ct}$ and $\eps\E^{Ct}<1$.  Then $v_\eps'=v'$ and $v_\eps v_\eps'' = vv''$, so ${\mathcal G}[v_\eps]={\mathcal G}[v]$.  We also have (denoting time derivatives by dots)
\begin{align*}
\frac{\partial}{\partial t}v_\eps
&=\dot\mu v+\mu\dot\tau\frac{\partial v}{\partial t}-\mu v'\mu^{-2}\dot\mu(a-\pi/2)\\
&={\mathcal G}[v]+(2+\dot\mu)v+v'(\pi-2a)(\mu^{-1}+\frac12\mu^{-1}\dot\mu)-(v')^2\\
&={\mathcal G}[v_\eps]+\frac{2+\dot\mu}{2\mu}(2v_\eps +v_\eps'(\pi-2a))-(v_\eps')^2\\
&<{\mathcal G}[v_\eps]+2v_\eps +v_\eps'(\pi-2a)-(v_\eps')^2
\end{align*}
where $v_\eps$ is always evaluated at $(a,t)$, while $v$ is evaluated at $(\pi/2+\mu^{-1}(a-\pi/2),\tau)$.
We used the identities $\mu\dot\tau=1$ and $\frac{2+\dot\mu}{2\mu}<1$ (coming from our choice $C>2$).
Thus for any $\eps>0$, $v_\eps$ satisfies the required strict inequality.  

Next we must overcome the difficulty caused by the fact that $v_\eps$ is not defined on the whole interval $(0,\pi)$.  To do this we simply replace $v_\eps$ by the smallest concave positive function which lies above it, as follows:  We define
\begin{align*}
\tilde v_\eps(a,t) &= \max\left\{\sup\left\{\frac{a}{x}v_{\eps}(x,t):\ x\in(a,\pi-\eps\E^{Ct})\right\},\right.\\
&\quad\quad\qquad\left.\sup\left\{\frac{\pi-a}{\pi-x}v_\eps(x,t):\ x\in(\eps\E^{Ct},a)\right\}\right\}.
\end{align*}
By smoothness and strict concavity of $v_\eps$, there exists $\eps\E^{Ct}<a_-(t)<\pi/2$ depending smoothly on $t$ such that
$$
\tilde v_\eps(a,t)=\begin{cases}
      \frac{a}{a_-}v_\eps(a_-,t),& 0\leq a\leq a_-; \\
      v_\eps(a),& a_-\leq a\leq \pi-a_-;\\
      \frac{\pi-a}{a_-}v_\eps(a_-,t),& \pi-a_-\leq a\leq\pi,
\end{cases}
$$
where $a_-$ is characterized by the condition $v_\eps'(a_-) = \frac{v_\eps(a_-)}{a_-}$.  $\tilde v_\eps$ is then $C^{1,1}$ and concave, and positive on $(0,\pi)$.  The corresponding function $\tilde f_\eps = \sqrt{2\tilde v_\eps}$ is strictly concave.  Note also that $\tilde v_\eps'(0)=v_\eps'(a_-)\in(0,\pi)$, so the boundary requirement $\limsup_{a\to 0}\frac{\tilde v_\eps(a,t)}{\pi a}<1$ is satisfied.  We check that $\tilde v_\eps$ still satisfies the strict differential inequality:  For $a\in(a_-,\pi-a_-)$ this is immediate since we have checked the inequality for $v_\eps$.  In the case $a\in(0,a_-)$ we have
\begin{align*}
\frac{\partial}{\partial t}\tilde v_\eps(a) &= \frac{a}{a_-}\frac{\partial}{\partial t}v_\eps(a_-)\\
&<\frac{a}{a_-}\left({\mathcal G}[v_\eps]+2v_\eps+v_\eps'(\pi-2a_-)-(v_\eps')^2\right).
\end{align*}
Since $v''(a)=0$ we have ${\mathcal G}[\tilde v_\eps](a)=0>\frac{a}{a_-}{\mathcal G}[v_\eps](a_-)$.  Also $\tilde v_\eps'(a)=v_\eps'(a_-)$, so that
\begin{align*}
\frac{\partial}{\partial t}\tilde v_\eps(a) &<{\mathcal G}[\tilde v_\eps]+2\tilde v_\eps
+\tilde v_\eps'(\pi-2a)-(\tilde v_\eps')^2-v_\eps'(1-\frac{a}{a_-})(\pi-\tilde v_\eps')\\
&<{\mathcal G}[\tilde v_\eps]+2\tilde v_\eps
+\tilde v_\eps'(\pi-2a)-(\tilde v_\eps')^2.
\end{align*}
The case $a\in(\pi-a_-,\pi)$ follows by symmetry.

Now for any $\eps>0$ we can apply Theorem \ref{thm:ODEcomparison} to show that $\Psi(\Omega_t,a)>\tilde f_\eps(a,t)$ (we leave it to the reader to check that the fact that $\tilde f_\eps$ is only $C^{1,1}$ and piecewise smooth is no obstacle).  Letting $\eps\to 0$ we deduce that $\Psi(\Omega_t,a)\geq f(a,t)=\Psi(\Theta_t,a)$ for all $a\in(0,\pi)$ and $t\in[0,T)$.
\end{proof}

\begin{corollary}\label{cor:curv.bound}
Under the conditions of Corollary \ref{cor:CSF.comparison}, the curvature $\curv$ of $\partial\Omega_t$
satisfies $\max_{\partial\Omega_t}\curv\leq \max_{\partial\Theta_t}\curv$.
\end{corollary}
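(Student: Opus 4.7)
The plan is to reduce this to an asymptotic comparison of isoperimetric profiles as $a\to 0$, invoking the two results already available: Corollary \ref{cor:CSF.comparison} (which controls the profiles at every time) and Proposition \ref{prop:asympt.isoperim.profile} (which extracts the maximum curvature from the small-$a$ asymptotics of the profile).

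First I would observe that both $\Omega_t$ and $\Theta_t$ enclose area $\pi$ for every $t$, since \eqref{eq:NCSF} is the area-normalized flow; in particular Proposition \ref{prop:asympt.isoperim.profile} applies to each of them at each time $t\in[0,T)$. By Corollary \ref{cor:CSF.comparison}, for every $t\in[0,T)$ we have
\begin{equation*}
\Psi(\Omega_t,a)\geq \Psi(\Theta_t,a)\qquad\text{for all } a\in(0,\pi).
\end{equation*}
Subtracting $\sqrt{2\pi a}$ from both sides and dividing by $a>0$, then letting $a\to 0^+$, the limits on both sides exist by Proposition \ref{prop:asympt.isoperim.profile} and yield
\begin{equation*}
-\frac{4\sup_{\partial\Omega_t}\curv}{3\pi}=\lim_{a\to 0}\frac{\Psi(\Omega_t,a)-\sqrt{2\pi a}}{a}\geq \lim_{a\to 0}\frac{\Psi(\Theta_t,a)-\sqrt{2\pi a}}{a}=-\frac{4\sup_{\partial\Theta_t}\curv}{3\pi}.
\end{equation*}
Multiplying through by $-\frac{3\pi}{4}$ reverses the inequality and gives $\max_{\partial\Omega_t}\curv\leq \max_{\partial\Theta_t}\curv$, as claimed.

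There is no real obstacle here once Proposition \ref{prop:asympt.isoperim.profile} and Corollary \ref{cor:CSF.comparison} are in hand; the only very mild subtlety is that Proposition \ref{prop:asympt.isoperim.profile} is stated for smoothly bounded domains of area $\pi$, which is exactly the class in which $\Omega_t$ and $\Theta_t$ live under \eqref{eq:NCSF}, so no additional regularity or normalization argument is required.
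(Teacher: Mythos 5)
Your proof is correct and takes essentially the same approach as the paper, which states only that the result follows immediately from Corollary \ref{cor:CSF.comparison} and Proposition \ref{prop:asympt.isoperim.profile}; you have simply filled in the (straightforward) details of subtracting $\sqrt{2\pi a}$, dividing by $a$, and passing to the limit.
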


\begin{proof}
This follows immediately from Corollary \ref{cor:CSF.comparison} and the asymptotic behaviour of the isoperimetric profile given in Proposition \ref{prop:asympt.isoperim.profile}.
\end{proof}

\section{Upper curvature bound from the Angenent solution}

In this section we compare with an explicit solution to produce an upper curvature bound for any embedded smooth solution of the normalized curve shortening flow equation \eqref{eq:NCSF}.  The `paperclip' solution of \eqref{eq:CSF} is given by
$$
\tilde\Theta_\tau=\left\{(\tilde x,\tilde y)\in\RR\times(-\pi/2,\pi/2):\ \E^
\tau\cosh(\tilde x)-\cos(\tilde y)\leq 0\right\},\quad \tau<0.
$$
This solution contracts to the origin with circular asymptotic shape as $\tau\to 0$.  In bounded regions it converges as $\tau\to-\infty$ to the parallel lines $y=\pm\frac{\pi}{2}$, while near the maxima of curvature it is asymptotic to the grim reaper $\{x=-\tau+\log 2+\log\cos y\}$.

\begin{figure}
\includegraphics[scale=0.6]{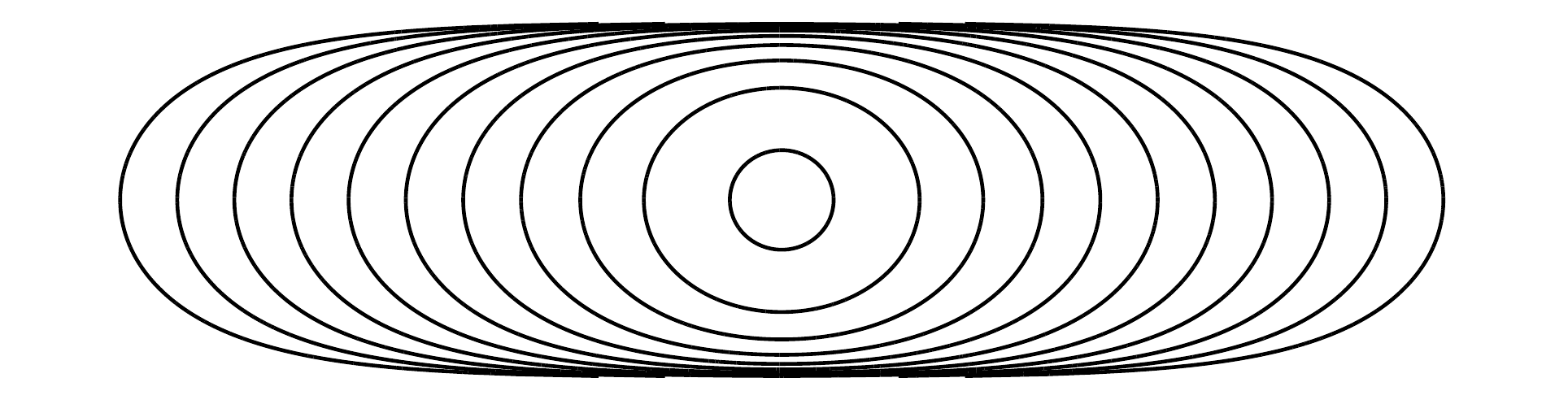}
\caption{The un-normalized paperclip for a range of $\tau<0$.}
\end{figure}

Corresponding to this is the solution of \eqref{eq:NCSF} given for $t\in\RR$ by
$$
\Theta_t=\left\{(x,y):\ |y|<\frac{{\pi}}{2}\E^t,\ \E^{-\frac12\E^{-2t}}\cosh\left(\E^{-t}x\right)-\cos\left(\E^{-t}y\right)\leq 0\right\}.
$$
The curvatures can be computed exactly:  Since $\tilde\Theta_\tau$ is a sub-level set of the convex function $G(x,y)=\E^\tau\cosh\tilde x-\cos\tilde y$, we have for $(\tilde x,\tilde y)\in\partial\tilde\Theta_\tau$
$$
\nor(\tilde x,\tilde y) = \frac{\nabla G}{|\nabla G|}=\frac{1}{\sqrt{\E^{2\tau}\sinh^2\tilde x+\sin^2\tilde y}}\begin{bmatrix}\E^{\tau}\sinh\tilde x\\\sin\tilde y\end{bmatrix}=\frac{1}{\sqrt{1-\E^{2\tau}}}\begin{bmatrix}\E^{\tau}\sinh\tilde x\\\sin\tilde y\end{bmatrix}
$$
so that
$$
\tang(\tilde x,\tilde y)=\begin{bmatrix}0&-1\\1&0\end{bmatrix}\nor(\tilde x,\tilde y) = 
\frac{1}{\sqrt{1-\E^{2\tau}}}\begin{bmatrix}-\sin\tilde y\\\E^\tau\sinh\tilde x\end{bmatrix}.
$$
The curvature is then given by
\begin{equation*}
\tilde\curv(\tilde x,\tilde y)=D_{\tang}\nor\cdot\tang=\frac{\E^\tau}{\sqrt{1-\E^{2\tau}}}\cosh\tilde x = \frac{1}{\sqrt{1-\E^{2\tau}}}\cos\tilde y.
\end{equation*}
The only critical points of $\tilde\curv$ are where $\tilde y=0$ or $\tilde x=0$, and the points of maximum
curvature lie on the $\tilde x$ axis and have value $(1-\E^{2\tau})^{-1/2}$.  The rescaled regions $\Theta_t$ therefore satisfy the conditions of Theorem \ref{thm:model.to.solution}, and have maximum curvature given by
$$
\curv_{\max} = \frac{\E^{-t}}{\sqrt{1-\E^{-\E^{-2t}}}} = 1+\frac14\E^{-2t}+O(\E^{-4t})\quad\text{as\ }t\to\infty.
$$

We claim that for any simply connected region $\Omega_0$ of area $\pi$ with smooth boundary $\gamma_0$, there exists $t_0$ such that $\Psi(\Omega_0,a)\geq\Psi(\Theta_{t_0},a)$ for all $a\in(0,\pi)$.  To see this, note that for fixed $a\in(0,\pi)$ we have $\Psi(\Theta_{t},a)=\pi\E^{t}(1+o(1))\to 0$ as $t\to-\infty$, since $\Theta_{t}$ is asymptotic to a pair of parallel lines with separation $\pi\E^{t}$.  The asymptotic grim reaper shape gives for $a>0$
$$
\Psi(\Theta_{t},a\E^{2t}) = \E^t\Psi(\mathfrak{G},a)(1+o(1))\quad\text{as\ }t\to-\infty,
$$
where $\mathfrak{G}$ is the grim reaper $\{x\leq\log\cos y,\ |y|<\pi/2\}$.  The existence of a suitable $t_0$ follows, and hence by Corollary \ref{cor:curv.bound} we have $\curv\leq \frac{\E^{-(t-t_0)}}{\sqrt{1-\E^{-\E^{-2(t-t_0)}}}}$, and so $\curv\leq 1+\frac14\E^{-2(t-t_0)}+O(\E^{-4t})$ as $t\to\infty$ for any closed curve evolving by the normalized curve shortening flow.

\section{Exterior isoperimetric profile and lower curvature bound}

In order to deduce long-time existence of the solution of normalized curve-shortening flow, it suffices to show that the curvature remains bounded.  The previous section gave an upper bound, and in this section we prove a lower bound by considering the exterior isoperimetric profile.  We begin with the analogue of Theorem \ref{thm:ODEcomparison} for the exterior profile:

\begin{theorem}\label{thm:extODEcomparison}
Let $f:\ \RR_+\times[0,\infty)\to\RR$ be continuous, smooth where both arguments are positive, concave in the first argument for each $t$, and such that $\limsup_{z\to 0}\frac{f(z,t)}{\sqrt{2\pi z}}<1$ and $\limsup_{z\to\infty}
\frac{f(z,t)}{\sqrt{4\pi z}}<1$, and
$$
\frac{\partial f}{\partial t}<-f^{-1}{\mathcal F}[ff',f^3f'']+f+f'(\pi-2a)-f(f')^2
$$
for all $a>0$ and $t\geq 0$.
Suppose $\gamma_t=\partial\Omega_t$ is a family of smooth embedded curves evolving by \eqref{eq:NCSF}  and satisfying $\Psi_\text{ext}(\Omega_0,a)>f(a,0)$ for all $a>0$, then $\Psi_{\text{ext}}(\Omega_t,a)>f(a,t)$ for all $t\geq 0$ and $a\in(0,\pi)$.
\end{theorem}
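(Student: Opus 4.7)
The argument parallels that of Theorem~\ref{thm:ODEcomparison}, proceeding by contradiction. Set $t_0 = \inf\{t\ge 0 : \Psi_{\text{ext}}(\Omega_t,a) \le f(a,t)\text{ for some }a > 0\}$ and suppose $t_0$ is finite. The hypothesis $\limsup_{a\to 0} f(a,t)/\sqrt{2\pi a} < 1$ excludes contact as $a\to 0$ via the exterior asymptotic in Proposition~\ref{prop:asympt.isoperim.profile}, and $\limsup_{a\to\infty} f(a,t)/\sqrt{4\pi a} < 1$ excludes contact as $a\to\infty$ because $\Psi_{\text{ext}}(\Omega_t,a) = \sqrt{4\pi a}(1+o(1))$: the upper bound is witnessed by taking a large round disk containing $\Omega_t$ of the appropriate area, and the matching lower bound follows from the planar isoperimetric inequality applied to $K\cup\Omega_t$. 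Thus the infimum is attained at some $a_0\in(0,\infty)$, and an isoperimetric region $K\subset\RR^2\setminus\bar\Omega_{t_0}$ with $|K|=a_0$ and $|\partial_{\RR^2\setminus\bar\Omega_{t_0}}K|=f(a_0,t_0)$ exists. The strict concavity of $f$ together with $\lim_{a\to 0}f(a,t_0)=0$ forces $K$ to be connected by the argument of Lemma~\ref{lem:connecteddomains} (the symmetry hypothesis there is needed only for the complementary conclusion), and the isoperimetric boundary $\sigma:=\partial_{\RR^2\setminus\bar\Omega_{t_0}}K$ is a smooth arc meeting $\gamma_{t_0}$ orthogonally at two points $X(u_\pm)$.

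Repeating the three variation lemmas in the exterior setting, the essential new feature is that the outward unit normal to $K$ along the arc of $\gamma_{t_0}$ bounding $K$ is $-\nor$ instead of $+\nor$; consequently the tangent of $\sigma$ at its endpoints satisfies $\nor(u_+)=+{\bf t}(0)$ rather than $-{\bf t}(0)$, and similarly at $X(u_-)$. The first variation is unaffected and gives $\curv_\sigma=f'$ on $\sigma$. In the second variation (Lemma~\ref{lem:second-var-ineq}) the boundary identity $(\dot\xi-\eta\eta_x/|\sigma_x|)|_{x=0}=\varphi(0)^2\curv(u_+)$ acquires the opposite sign, and likewise at $x=1$; taking the infimum over admissible $\varphi$ with $\varphi(0)=\varphi(1)=1$ yields
\[
-(\curv(u_-)+\curv(u_+))\le\frac{1}{f}\,\mathcal{F}[ff',f^3f''].
\]
In the time variation (Lemma~\ref{lem:time-var-ineq}), the turning-tangent theorem applied to $\partial K$ counterclockwise (with the $\gamma$-arc traversed opposite to its orientation as part of $\partial\Omega_{t_0}$) still delivers $\int_{\gamma\text{-arc}}\curv\,du=ff'-\pi$, and the dilation contribution to $\partial_t|K_t|$ remains $2|K|$, so the formula $\partial_t|K_t|=2a_0+ff'-\pi+\int V\cdot{\bf n}\,|\sigma_x|\,dx$ carries over unchanged from the interior case; however, the endpoint terms $V\cdot{\bf t}|_0^1$ in $\partial_t|\sigma|$ flip sign by the same mechanism as in the second variation, producing
\[
-\frac{\partial f}{\partial t}+f+f'(\pi-2a_0)-f(f')^2\le-(\curv(u_-)+\curv(u_+)).
\]

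Chaining the two displayed inequalities eliminates $\curv(u_\pm)$ and yields
\[
-\frac{\partial f}{\partial t}+f+f'(\pi-2a_0)-f(f')^2\le\frac{1}{f}\,\mathcal{F}[ff',f^3f''],
\]
which contradicts the strict differential inequality hypothesised on $f$ at $(a_0,t_0)$. Hence $\Psi_{\text{ext}}(\Omega_t,a)>f(a,t)$ for all $t\ge 0$ and $a>0$. The principal obstacle is verifying the sign bookkeeping faithfully: taken individually, the modified second-variation estimate becomes a \emph{lower} bound on $\curv(u_-)+\curv(u_+)$ while the modified time-variation estimate becomes an \emph{upper} bound, neither useful on its own; the two reversals are compatible, however, and their combination recovers precisely the same differential constraint on $f$ as in the interior theorem.
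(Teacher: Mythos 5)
Your overall strategy matches the paper's: reduce to a first-contact time $t_0$, extract an isoperimetric region $K$, run the first, second, and time variation lemmas with the sign flips dictated by replacing $\nor$ and $\curv$ by their negatives, and chain the two resulting inequalities to contradict the strict differential inequality. Your sign bookkeeping on the second and time variations is consistent with the paper's remark that the exterior case is ``identical'' after $\curv\mapsto-\curv$ and $\nor\mapsto-\nor$, and your chaining correctly recovers the same contradiction. Your derivation of the large-$a$ asymptotics of $\Psi_{\text{ext}}$ is essentially the paper's Lemma on exterior profile asymptotics (the paper applies the planar isoperimetric inequality to $K$ and adds $|\partial\Omega|$, you apply it to $K\cup\Omega_t$; both work).

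There is, however, a genuine gap at the topological step. You establish connectedness of $K$ via the concavity argument and then simply assert that $\sigma=\partial_{\RR^2\setminus\bar\Omega_{t_0}}K$ ``is a smooth arc meeting $\gamma_{t_0}$ orthogonally at two points.'' That assertion requires knowing that $K$ is simply connected with a single boundary arc, which does \emph{not} follow from connectedness alone: a connected $K$ in the unbounded complement $\RR^2\setminus\bar\Omega_{t_0}$ could still leave a bounded ``hole'' (a bounded component of $\RR^2\setminus(\bar\Omega\cup\bar K)$), in which case $\partial_{\RR^2\setminus\bar\Omega}K$ would have more than one component. In the interior Theorem, simple connectedness was obtained precisely from the symmetry $f(a)=f(\pi-a)$ (the complement of $K$ in $\Omega$ has the same perimeter cost, hence is also connected). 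Your parenthetical remark that ``the symmetry hypothesis there is needed only for the complementary conclusion'' concedes this but then skips past it. The paper handles the exterior case with a separate lemma (\ref{lem:extconnecteddomains}): if a bounded component $L$ of $\RR^2\setminus(\bar\Omega\cup\bar K)$ existed, absorbing it into $K$ strictly increases area without increasing perimeter, contradicting strict monotonicity of $f$. You should supply this argument (or another) and verify that the hypotheses of the theorem actually furnish the strict monotonicity it needs; otherwise the structure of $\partial_{\RR^2\setminus\bar\Omega}K$ used in all three variation lemmas is unjustified.
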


\begin{proof}
The proof is closely analogous to that of Theorem \ref{thm:ODEcomparison}.  We first establish conditions under which the isoperimetric exterior domains are simply connected and have a single boundary curve:

\begin{lemma}\label{lem:extconnecteddomains}
If $f:\ \RR_+\to\RR$ is strictly concave and strictly increasing, and $\Omega\subset\RR^2$ is a compact simply connected domain with $\Psi(\Omega,a)\geq f(a)$ for every $a\geq 0$, then every region $K\subset \RR^2\setminus\bar\Omega$ with $|\partial_{\RR^2\setminus\bar\Omega}K|=f(|K|)$ and $|K|>0$ is connected and simply connected.
\end{lemma}

\begin{proof}
As in the proof of Lemma \ref{lem:connecteddomains}, $K$ is connected since $f$ is strictly concave.  Now suppose that 
$\RR^2\setminus(\bar\Omega\cup\bar K)$ is not connected.  Then there exists a component $L$ of $\RR^2\setminus(\Omega\cup K)$ which is bounded.  Let $\tilde K$ be the interior of $\bar(K\cup L)$.  
Then every boundary component (relative to $\RR^2\setminus\bar\Omega$) of $\tilde K$ is a boundary component of $K$, so $|\partial_{\RR^2\setminus\bar\Omega}\tilde K|\leq|\partial_{\RR^2\setminus\bar\Omega}K|$, while $|\tilde K|>|K|$.  But then since $f$ is strictly increasing, we have
$$
|\partial_{\RR^2\setminus\bar\Omega}\tilde K|\leq |\partial_{\RR^2\setminus\bar\Omega}K| = f(|K|)<f(|\tilde K|)
$$
which contradicts the assumption of the Lemma.
Therefore $K$ and its complement in $\RR^2\setminus\bar\Omega$ are connected, so $K$ is simply connected.
\end{proof}

The behaviour of the exterior profile for small $a$ is determined by Proposition \ref{prop:asympt.isoperim.profile}.
We also need to establish the behaviour for large $a$:

\begin{lemma}\label{lem:ext-profile-asymp}
For $\Omega\subset\RR^2$ compact, $\lim_{a\to\infty}\frac{\Psi_{\text{ext}}(\Omega,a)}{\sqrt{4\pi a}}=1$.
\end{lemma}

\begin{proof}
The upper bound is trivial, since for any $a>0$ we can choose $K$ to be a ball of area $a$ which does not intersect $\Omega$, giving $f(a)\leq |\partial K|=\sqrt{4\pi a}$.  For the lower bound, let $K$ be an isoperimetric region of area $a$ in $\RR^2\setminus\bar\Omega$.  Then $\partial_{\RR^2}K\subset \partial_{\RR^2\setminus\bar\Omega}K\cup\partial\Omega$, so $|\partial_{\RR^2}K|\leq |\partial_{\RR^2\setminus\bar\Omega}K|+|\partial\Omega|$.
 By the isoperimetric inequality for the plane we have $|\partial_{\RR^2}K|\geq \sqrt{4\pi|K|}=\sqrt{4\pi a}$.  Combining these inequalities we find $f(|K|)\geq \sqrt{4\pi a}-|\partial\Omega|$.
\end{proof}

This guarantees that under the assumptions of Theorem \ref{thm:extODEcomparison}, at the first time where the inequality does not hold strictly, we must have equality for some $a\in(0,\infty)$.
The remainder of the proof is identical to that in Theorem \ref{thm:ODEcomparison} (except that since we are working with the exterior of $\Omega_t$, the normal direction and the curvature are replaced by their negatives throughout).
\end{proof}

To apply this we prove a result analogous to Theorem \ref{thm:model_isoperim}:

\begin{theorem}\label{thm:ext-model-isoperim}
 Let $\gamma=\partial\Omega$, where $\Omega$ is a smoothly bounded non-compact convex region with only one vertex and reflection symmetry in the $x$ axis.   Let $X:(-\pi/2,\pi/2)\to\RR^2$ be the map which takes $\theta$ to the point in $\gamma$ with outward normal direction $(\cos\theta,\sin\theta)$.  Then for each $\theta\in(0,\pi/2)$ there exists a unique constant curvature curve $\sigma_\theta$ which is contained in $\Omega$ and has endpoints at $X(\theta)$ and $X(-\theta)$ meeting $\gamma$ orthogonally.  Let $K^x_\theta$ denote the compact connected component of $\Omega\setminus\sigma_\theta$.   Then there exists a smooth, increasing diffeomorphism $\theta$ from $(0,\infty)$ to $(0,\pi/2)$ such that $K_a=K^x_{\theta(a)}$ has area $a$ for each $a\in(0,\infty)$, and the unique isoperimetric regions of area $a$ in $\Omega$ is $K_{a}$.
 \end{theorem}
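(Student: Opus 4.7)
My plan is to follow the architecture of the proof of Theorem \ref{thm:model_isoperim}, adapted to the one-vertex, non-compact setting. Parametrise $\gamma$ by its support function so that $X(\theta)=(h(\theta)+ih'(\theta))\E^{i\theta}$ for $\theta\in(-\pi/2,\pi/2)$; the reflection symmetry in the $x$-axis forces $h(-\theta)=h(\theta)$, and the single-vertex assumption forces the radius of curvature $\rr=h''+h$ to be strictly increasing in $|\theta|$ with its minimum at $\theta=0$.

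First I would establish that an isoperimetric region $K$ of area $a$ exists, with $\partial_\Omega K$ a single constant-curvature arc meeting $\gamma$ orthogonally. Because $\Omega$ is unbounded, a concentration-compactness argument is needed: exhaust $\Omega$ by an increasing sequence of compact convex subdomains, for which the Sternberg--Zumbrun result applies, and observe that a minimising sequence for area $a$ cannot lose mass to infinity without forcing its perimeter above an explicit competitor (for example one of the candidates $K^x_\theta$ to be constructed below). Passing to the limit gives a minimiser contained in a compact part of $\Omega$; strict concavity of the profile, or the rearrangement argument used in Lemma \ref{lem:connecteddomains}, then yields connectedness of $\partial_\Omega K$.

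Next I would classify such arcs. Straight segments are excluded because the required orthogonality endpoints would be $X(\theta\pm\pi/2)$, values outside the parameter range. For a circular arc orthogonal to $\gamma$ at $X(\theta_1),X(\theta_2)$, the support-function calculation of Theorem \ref{thm:model_isoperim} yields
\[
G(\theta_1,\theta_2):=(h(\theta_2)-h(\theta_1))\cos\Delta-(h'(\theta_2)+h'(\theta_1))\sin\Delta=0,
\]
with $\Delta=(\theta_2-\theta_1)/2$. The identity $\partial_{\theta_2}G-\partial_{\theta_1}G=(\rr(\theta_1)-\rr(\theta_2))\sin\Delta$, together with strict monotonicity of $\rr$ in $|\theta|$, shows exactly as in the earlier proof that within $(-\pi/2,\pi/2)^2$ the only nontrivial zeros of $G$ lie on the antidiagonal $\theta_1+\theta_2=0$; the translates by $\pi$ appearing in the compact case are unavailable here. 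This forces the centre of the disc onto the $x$-axis and $\theta_2=-\theta_1=\theta\in(0,\pi/2)$, so the arc is one of the $\sigma_\theta$.

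Finally I would verify the quantitative properties of $\{K^x_\theta\}$. The inscribed-disc statement from Lemma \ref{lem:inscribed.disk} transfers verbatim, since its proof is purely local: the inequality $\rr(\theta)>\bar\rr$ comes from $y(\theta)=\int_0^\theta\rr(\phi)\cos\phi\,d\phi<\rr(\theta)\sin\theta$, and containment of the disc uses the ODE $v''+v=\rr'>0$ on $[-\theta,\theta]$ together with the monotonicity $\rr(\phi)>\rr(\theta)$ on $[\theta,\pi/2)$. This guarantees $\sigma_\theta\subset\Omega$ and that $K^x_\theta$ is simply connected. The same derivative computation as in Theorem \ref{thm:model_isoperim} gives
\[
\partial_\theta|K^x_\theta|=\rho^2\bigl(-2+(\pi-2\theta)\tan\theta\bigr)+\rr\rho(\pi-2\theta),
\]
which is positive on $(0,\pi/2)$ by $\rr>\rho/\tan\theta$; one has $|K^x_\theta|\to 0$ as $\theta\to 0$ (since $\rho\to 0$) and $|K^x_\theta|\to\infty$ as $\theta\to\pi/2$ (since $\rho=y/\cos\theta\to\infty$ and $K^x_\theta$ exhausts $\Omega$). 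Combined with the previous step this identifies $K^x_{\theta(a)}$ as the unique isoperimetric region of area $a$. The main obstacle will be the existence and connectedness in the first step: in the compact setting of Theorem \ref{thm:model_isoperim} this was automatic via Sternberg--Zumbrun, but here one must carefully rule out mass escaping along the unbounded directions of $\Omega$.
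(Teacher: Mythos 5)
Your proposal follows essentially the same approach as the paper: both adapt the machinery of Theorem \ref{thm:model_isoperim} to the non-compact, one-vertex setting, using the support-function parametrization, the function $G$ to classify orthogonal circular arcs, the inscribed-disc lemma, and the area-monotonicity computation. The paper's own proof is much terser, simply citing Kuwert for strict concavity of $\Psi^2$, Sternberg--Zumbrun or Lemma \ref{lem:extconnecteddomains} for connectedness, and ``the argument of Theorem \ref{thm:model_isoperim}'' for uniqueness of candidates, while you write out these steps. Your extra care about existence in the non-compact domain (ruling out loss of mass to infinity by exhibiting a competitor cheaper than an interior disk) is a genuine point the paper leaves implicit; it is correct and worth noting, though the circularity of quoting the $K^x_\theta$ before they are constructed is easily avoided by comparing with a half-disk at the vertex.

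One small gap: you say strict concavity of the profile, via the rearrangement argument of Lemma \ref{lem:connecteddomains}, gives connectedness of $\partial_\Omega K$. Concavity alone gives connectedness of $K$, but the step in Lemma \ref{lem:connecteddomains} that makes $\Omega\setminus\bar K$ connected (hence $\partial_\Omega K$ a single arc) relies on the symmetry $f(\pi-x)=f(x)$ of the compact case, which is absent here. The correct replacement is what the paper uses in Lemma \ref{lem:extconnecteddomains}: strict \emph{monotonicity} of the profile, which follows from strict concavity of $\Psi^2$ together with the fact that it is defined and positive on all of $(0,\infty)$. With that substitution your argument goes through.
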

 
 \begin{proof}
 By convexity, $\partial\Omega$ is defined by an embedding $X:\ (-\theta_0,\theta_0)\to\RR^2$ for some $\theta_0\in(0,\pi/2]$ which takes $\theta$ to the point in $\partial\Omega$ with outward normal direction $\theta$.   The argument of 
 \cite{Kuwert} shows that $\Psi(\Omega,a)^2$ is strictly concave, hence strictly increasing since it is defined and positive for all positive $a$.  By the argument in \cite{SZ} or Lemma 
 \ref{lem:extconnecteddomains} the boundary of any isoperimetric region is a single circular arc meeting $\partial\Omega$ orthogonally at both ends.  The argument of Theorem \ref{thm:model_isoperim} shows that there is only one candidate for an isoperimetric region for each $a>0$, which is that given in the Theorem.
 \end{proof}

To produce suitable solutions of the differential inequality we consider suitable non-compact solutions of the normalized flow:

\begin{theorem}\label{thm:ext-model-to-solution}
Let $\Omega_0$ be a non-compact convex subset of $\RR^2$, with smooth boundary curve $\gamma_0$ given by the image of a smooth embedding $X_0:\ S^1\to\RR^2$, and assume $\Omega_0$ is symmetric in the $x$ axis and has only one vertex.   
Let $X:\ S^1\times[0,T)\to\RR^2$ be the solution of \eqref{eq:NCSF} with initial data $X_0$.  Then for each $t\in[0,T)$, the region $\Omega_t$ enclosed by $\gamma_t=X(S^1,t)$ is a non-compact convex region symmetric in the $x$ axis, with only one vertex.   For each $t$, let $K_{a,t}$ be the family of isoperimetric regions for $\Omega_t$ constructed in Theorem \ref{thm:ext-model-isoperim}, and define $f(a,t)=\left|\partial_{\Omega_t}K_{a,t}\right|$.  Then $f:\ (0,\infty)\times[0,T)\to\RR$ is an increasing concave solution of the equation \eqref{eq:comparison.equation}
with $\lim_{a\to 0}\frac{f(a,t)}{\sqrt{2\pi a}}=1$, ${\mathcal F}[ff',f^3f'']>0$, and $\lim_{a\to\infty}\frac{f(a,t)}{\sqrt{4\pi a}}=1$.
\end{theorem}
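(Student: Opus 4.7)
The plan is to adapt the proof of Theorem \ref{thm:model.to.solution} to the non-compact, one-vertex setting, proceeding in four stages parallel to that proof.

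First, I would establish preservation of the geometric structure of $\Omega_t$: non-compactness, convexity, $x$-axis symmetry, and the single vertex. Symmetry follows from uniqueness of \eqref{eq:NCSF} together with its invariance under reflection, as the reflected solution satisfies the same equation with the same initial data. Convexity is preserved under normalized curve shortening flow by the standard argument via evolution of the curvature (or support function), and non-compactness is preserved by convexity together with a comparison with straight-line asymptotes. For the vertex count, I would invoke the zero-counting result of \cite{Ang.zero.count}: the number of critical points of $\kappa$ in the normal-angle parametrization cannot increase. Since $\Omega_0$ has exactly one vertex, which by symmetry must lie on the $x$ axis, the same holds for every $t$; existence of at least one vertex is forced by symmetry since $\kappa$ restricted to the symmetry axis has a critical point at $\theta=0$.

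Second, with the geometric structure preserved, Theorem \ref{thm:ext-model-isoperim} applies at each $t$ and identifies $K_{a,t}$ as the unique isoperimetric region of area $a$ in $\Omega_t$, so $f(a,t)=\Psi(\Omega_t,a)$. Strict concavity of $f$ (indeed of $f^2$) is the result of \cite{Kuwert} and \cite{SZ}, which also gives strict monotonicity in $a$. The asymptotic $\lim_{a\to 0}f(a,t)/\sqrt{2\pi a}=1$ is Proposition \ref{prop:asympt.isoperim.profile}, and $\lim_{a\to\infty}f(a,t)/\sqrt{4\pi a}=1$ is Lemma \ref{lem:ext-profile-asymp}. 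The positivity ${\mathcal F}[ff',f^3f'']>0$ reads off from \eqref{eq:expr.F} together with the fact that concavity and the boundary asymptotics force $f'\in(0,\pi)$.

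Third, to derive \eqref{eq:comparison.equation} I would apply Lemmas \ref{lem:first-var}, \ref{lem:second-var-ineq}, and \ref{lem:time-var-ineq} verbatim. Lemma \ref{lem:first-var} gives $\kappa_\sigma=f'$ on the boundary arc. For the one-parameter variation obtained by moving through the family $\{K_{a,t}\}_a$ at fixed $t$ with endpoints traversing $\gamma_t$ at unit speed, the constant function $\varphi\equiv 1$ produces equality in the second-variation inequality, yielding
\[
\kappa(u_-)+\kappa(u_+)=\frac{1}{f}\,{\mathcal F}[ff',f^3f''].
\]
Similarly, the smooth family $\{K_{a,t}\}_t$ at fixed $a$ realizes equality in the time-variation inequality (Lemma \ref{lem:time-var-ineq}), giving
\[
\frac{\partial f}{\partial t}=f+f'(\pi-2a)-f(f')^2-\bigl(\kappa(u_-)+\kappa(u_+)\bigr).
\]
Substituting the second identity into the first produces \eqref{eq:comparison.equation}.

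The main obstacle will be the first stage: controlling the non-compact ends. One must rule out spontaneous generation of new critical points of curvature from infinity under Angenent's zero-counting argument, and ensure that $\Omega_t$ remains non-compact and convex with boundary normals sweeping a fixed open arc. This requires an asymptotic analysis of $\gamma_t$ at infinity using convexity together with the drift term $X$ in \eqref{eq:NCSF}; once this is in hand the remaining variational computations go through exactly as in the compact case.
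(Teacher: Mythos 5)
Your proposal tracks the paper's proof exactly: the paper's own argument is a one-line deferral to Theorem \ref{thm:model.to.solution} with Theorem \ref{thm:ext-model-isoperim} substituted for Theorem \ref{thm:model_isoperim}, and you have correctly unpacked the steps (preservation of symmetry/convexity/vertex count, identification $f=\Psi(\Omega_t,\cdot)$, concavity and asymptotics from \cite{Kuwert}, \cite{SZ}, Proposition \ref{prop:asympt.isoperim.profile}, and Lemma \ref{lem:ext-profile-asymp}, then \eqref{eq:comparison.equation} from equality in Lemmas \ref{lem:first-var}--\ref{lem:time-var-ineq}). You also rightly flag the one point the paper leaves entirely implicit, namely the extension of Angenent's zero-counting argument and the control of the convex non-compact ends under the flow; this is the genuine subtlety and it is good that you isolate it rather than glossing over it as the paper does.
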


The proof is the same as that of Theorem \ref{thm:model.to.solution}, using Theorem \ref{thm:ext-model-isoperim} instead of Theorem \ref{thm:model_isoperim}.
Arguing as in Corollary \ref{cor:CSF.comparison}, we deduce the following comparison theorem:

\begin{corollary}\label{cor:extcomparison}
Let $\{\Omega_t:\ 0\leq t<T\}$ be any smooth compact embedded solution of the normalized curve shortening flow \eqref{eq:NCSF}, and let $\{\Theta_t:\ 0\leq t<T\}$ be a solution of \eqref{eq:NCSF} for which $\Theta_0$ is a smoothly bounded non-compact convex region with reflection symmetry in the $x$ coordinate axes and exactly one vertex, such that $\Psi_\text{ext}(\Omega_0,a)\geq\Psi(\Theta_0,a)$ for every $a>0$.  Then
$\Psi_\text{ext}(\Omega_t,a)\geq\Psi(\Theta_t,a)$ for all $a>0$ and all $t\in[0,T)$.
\end{corollary}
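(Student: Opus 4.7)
The plan is to mirror the proof of Corollary \ref{cor:CSF.comparison} but using Theorem \ref{thm:extODEcomparison} and Theorem \ref{thm:ext-model-to-solution} in place of their interior counterparts. By Theorem \ref{thm:ext-model-to-solution}, the function $f(a,t)=\Psi_{\text{ext}}(\Theta_t,a)$ is defined on $(0,\infty)\times[0,T)$, concave and increasing in $a$, satisfies the comparison equation \eqref{eq:comparison.equation} with $\mathcal{F}[ff',f^3f'']>0$, and has the asymptotic behaviour $\lim_{a\to 0}f/\sqrt{2\pi a}=1$ and $\lim_{a\to\infty}f/\sqrt{4\pi a}=1$. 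The hypothesis $\Psi_{\text{ext}}(\Omega_0,a)\geq f(a,0)$ therefore places us in a situation where it suffices to construct a family $f_\varepsilon$ satisfying the strict hypotheses of Theorem \ref{thm:extODEcomparison} with $f_\varepsilon(a,0)<f(a,0)$ and $f_\varepsilon\to f$ as $\varepsilon\to 0$.

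Working with $v=\tfrac12 f^2$ as in Corollary \ref{cor:CSF.comparison}, I would first produce a parabolic rescaling that introduces slack in the differential inequality. Because the relevant domain is now $(0,\infty)$ rather than the symmetric interval $(0,\pi)$, the appropriate scaling is simply $v_\varepsilon(a,t)=\mu(t)\,v(\mu^{-1}(t)a,\tau(t))$ with $\mu=1-\varepsilon\E^{Ct}$ and $\tau=\int_0^t\mu^{-1}\,dt'$, for some constant $C>2$. A direct computation, identical in structure to the one performed for Corollary \ref{cor:CSF.comparison}, shows that $v_\varepsilon'=v'$ and $v_\varepsilon v_\varepsilon''=vv''$ so $\mathcal{G}[v_\varepsilon]=\mathcal{G}[v]$, and one obtains the strict inequality
\[
\frac{\partial v_\varepsilon}{\partial t}<\mathcal{G}[v_\varepsilon]+2v_\varepsilon+v_\varepsilon'(\pi-2a)-(v_\varepsilon')^2,
\]
valid wherever $v_\varepsilon$ is defined, namely for $a>\varepsilon\E^{Ct}$. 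The same rescaling also automatically gives the required strict bound at infinity: since $v(a,\tau)\sim 2\pi a$ as $a\to\infty$, one has $v_\varepsilon(a,t)\sim 2\pi\mu(t)\,a$, so $\limsup_{a\to\infty}f_\varepsilon/\sqrt{4\pi a}=\sqrt{\mu(t)}<1$.

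To obtain the boundary behaviour at $a=0$ and extend $v_\varepsilon$ to the full domain $(0,\infty)$, I would then replace $v_\varepsilon$ by the smallest concave positive function lying above it, exactly as in Corollary \ref{cor:CSF.comparison} but with only the left-hand cut-off: choose $a_-(t)>\varepsilon\E^{Ct}$ characterised by $v_\varepsilon'(a_-)=v_\varepsilon(a_-)/a_-$ and set $\tilde v_\varepsilon(a,t)=\tfrac{a}{a_-}v_\varepsilon(a_-,t)$ for $0\leq a\leq a_-$ and $\tilde v_\varepsilon=v_\varepsilon$ for $a\geq a_-$. This produces a $C^{1,1}$ concave function with $\tilde v_\varepsilon'(0)=v_\varepsilon'(a_-)<\pi$ (the bound coming from strict concavity combined with Proposition \ref{prop:asympt.isoperim.profile}), so that $\limsup_{a\to 0}\tilde f_\varepsilon/\sqrt{2\pi a}<1$. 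Verifying that $\tilde v_\varepsilon$ still satisfies the strict differential inequality on $(0,a_-)$ is the same computation as in Corollary \ref{cor:CSF.comparison}: one uses $\tilde v_\varepsilon''=0$ so $\mathcal{G}[\tilde v_\varepsilon]=0$, and bounds the resulting error term using $v_\varepsilon'>0$ and $\tilde v_\varepsilon'<\pi$.

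With $\tilde f_\varepsilon=\sqrt{2\tilde v_\varepsilon}$ in hand, applying Theorem \ref{thm:extODEcomparison} yields $\Psi_{\text{ext}}(\Omega_t,a)>\tilde f_\varepsilon(a,t)$ for every $t\in[0,T)$ and $a>0$, and letting $\varepsilon\to 0$ gives the conclusion $\Psi_{\text{ext}}(\Omega_t,a)\geq f(a,t)=\Psi_{\text{ext}}(\Theta_t,a)$. The main obstacle, as in the interior case, is verifying that the piecewise-smooth, merely $C^{1,1}$ function $\tilde f_\varepsilon$ can be fed into Theorem \ref{thm:extODEcomparison}; this is handled exactly as indicated in the compact version (the contradiction argument in the proof of Theorem \ref{thm:extODEcomparison} only uses smoothness of $f$ at the first touching point $a_0$, which lies in the interior region $(a_-,\infty)$ where $\tilde f_\varepsilon$ agrees with the smooth function $f_\varepsilon$, provided one checks that the linear piece on $(0,a_-)$ cannot be the first touching set — but this is automatic since $\Psi_{\text{ext}}(\Omega_t,a)$ is bounded below by $\sqrt{2\pi a}(1-o(1))$ near $a=0$ while $\tilde f_\varepsilon$ has $\tilde f_\varepsilon'(0)<\pi$ so $\tilde f_\varepsilon(a)<\sqrt{2\pi a}$ for small $a$).
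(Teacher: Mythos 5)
Your overall strategy is the right one and matches what the paper intends: reduce to Theorem \ref{thm:extODEcomparison} by building a family $f_\eps\nearrow f$ satisfying the strict hypotheses, via a parabolic rescaling of $v=\tfrac12 f^2$ followed by a concave regularization near the left endpoint. (A small slip: the model function is $f(a,t)=\Psi(\Theta_t,a)$, the \emph{interior} profile of the non-compact convex region $\Theta_t$, not $\Psi_{\text{ext}}(\Theta_t,a)$.) However, the rescaling you propose does not produce a function to which the concave-majorant trick can be applied, and that is a genuine gap.

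The formula $v_\eps(a,t)=\mu(t)\,v(\mu^{-1}(t)a,\tau(t))$ is a pure dilation centered at the origin. Since $v$ is defined on all of $(0,\infty)$ with $v(0^+)=0$, the function $v_\eps$ is \emph{also} defined on all of $(0,\infty)$, with $v_\eps(0^+)=0$; there is no cut-off at $a=\eps\E^{Ct}$. Because the dilation fixes both endpoints of the $a$-axis, it produces no slack in the boundary asymptotics: $v_\eps'(0^+)=v'(0^+)=\pi$, so $\lim_{a\to 0}\tfrac{f_\eps}{\sqrt{2\pi a}}=1$, not $<1$, and similarly at infinity $v_\eps(a)=\mu\,v(\mu^{-1}a)$ has the \emph{same} asymptotic slope as $v$ (your claim $v_\eps\sim 2\pi\mu a$ misses the $\mu^{-1}$ in the argument; in fact $\mu\cdot c\cdot\mu^{-1}a=ca$). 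Worse, the concave-majorant step cannot fire: for a strictly concave $v_\eps$ with $v_\eps(0)=0$ one has $a\,v_\eps'(a)<v_\eps(a)$ for every $a>0$ (the function $a\mapsto v_\eps(a)-a\,v_\eps'(a)$ vanishes at $0$ and has derivative $-a\,v_\eps''>0$), so the characterizing equation $v_\eps'(a_-)=v_\eps(a_-)/a_-$ has no solution and the proposed $\tilde v_\eps$ is just $v_\eps$ itself. You would then be handing Theorem \ref{thm:extODEcomparison} a function that violates its hypothesis at $a=0$.

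To repair this, mimic the compact case more closely and use a rescaling that \emph{translates} the domain away from the origin, for instance $v_\eps(a,t)=\mu\,v\bigl(\pi/2+\mu^{-1}(a-\pi/2),\tau\bigr)$. The reparametrization identities $v_\eps'=v'$ and $v_\eps v_\eps''=vv''$ still hold, and the time-derivative computation shows the strict inequality: one finds $\partial_t v_\eps-\bigl({\mathcal G}[v_\eps]+2v_\eps+v_\eps'(\pi-2a)-(v_\eps')^2\bigr)=\tfrac{\dot\mu+2(1-\mu)}{2\mu}\bigl(2v_\eps+v_\eps'(\pi-2a)\bigr)$, which is negative because $\dot\mu+2(1-\mu)=(2-C)\eps\E^{Ct}<0$ and $2v_\eps+v_\eps'(\pi-2a)>0$ (it attains its minimum at $a=\pi/2$ where it equals $2v_\eps(\pi/2)>0$). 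With this choice $v_\eps$ is defined only for $a>a_0:=\tfrac{\pi}{2}(1-\mu)$ and vanishes at $a_0$, so the function $a\,v_\eps'(a)-v_\eps(a)$ is $a_0v_\eps'(a_0)>0$ at the left endpoint, and becomes negative further in; the tangent-from-origin point $a_->a_0$ then exists, and the concave extension $\tilde v_\eps(a)=\tfrac{a}{a_-}v_\eps(a_-)$ for $a\le a_-$ gives a $C^{1,1}$ function with $\tilde v_\eps'(0)=v_\eps'(a_-)<\pi$, hence $\limsup_{a\to 0}\tilde f_\eps/\sqrt{2\pi a}<1$. The verification that $\tilde v_\eps$ still satisfies the strict inequality on $(0,a_-)$ and the handling of the piecewise-smooth function in Theorem \ref{thm:extODEcomparison} (the touching point is forced to lie in $(a_-,\infty)$) then proceed exactly as you outline. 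The condition $\limsup_{a\to\infty}\tilde f_\eps/\sqrt{4\pi a}<1$ also needs a correct justification; it does not come from the rescaling, but from the behaviour of $\Psi(\Theta_t,\cdot)$ for the model region itself.
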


The asymptotics for small $a$ of the exterior profile given in Proposition \ref{prop:asympt.isoperim.profile} then imply the following:

\begin{corollary}\label{cor:lower-curv-bound}
Under the conditions of Corollary \ref{cor:extcomparison}, $\min_{\partial\Omega_t}\curv\geq -\max_{\partial\Theta_t}\curv$.
\end{corollary}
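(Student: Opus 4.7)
The plan is to mimic directly the derivation of Corollary \ref{cor:curv.bound}, replacing the interior isoperimetric comparison by the exterior one. First, apply Corollary \ref{cor:extcomparison} to obtain $\Psi_\text{ext}(\Omega_t,a)\geq\Psi(\Theta_t,a)$ for every $a>0$ and $t\in[0,T)$. Second, expand both sides as $a\to 0^+$ using Proposition \ref{prop:asympt.isoperim.profile}. The left-hand side, since $\Omega_t$ has area $\pi$, has the expansion
$$
\Psi_\text{ext}(\Omega_t,a)=\sqrt{2\pi a}+\frac{4\inf_{\partial\Omega_t}\curv}{3\pi}\,a+o(a).
$$
For the right-hand side, $\Theta_t$ is a smoothly bounded non-compact convex region, but the small-$a$ asymptotic of the (interior) profile stated in Proposition \ref{prop:asympt.isoperim.profile} carries over unchanged, yielding
$$
\Psi(\Theta_t,a)=\sqrt{2\pi a}-\frac{4\sup_{\partial\Theta_t}\curv}{3\pi}\,a+o(a).
$$
Subtracting, dividing by $a$, and letting $a\to 0^+$ gives
$$
\frac{4\inf_{\partial\Omega_t}\curv}{3\pi}\geq -\frac{4\sup_{\partial\Theta_t}\curv}{3\pi},
$$
which rearranges to $\min_{\partial\Omega_t}\curv\geq -\max_{\partial\Theta_t}\curv$.

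The only step requiring a word of justification is the second asymptotic expansion above for the non-compact domain $\Theta_t$. Here one notes that $\sup_{\partial\Theta_t}\curv<\infty$ (attained at the unique vertex), and that the proof of Proposition \ref{prop:asympt.isoperim.profile} is purely local: it constructs a test region inside a small neighbourhood of a chosen boundary point via the diffeomorphism $\chi$ onto a model region $\Omega_\curv$, and bounds the isoperimetric profile from above and below by $\Psi(\Omega_\curv,\cdot)$ up to errors controlled by $Cd^2$. Neither global compactness of the ambient domain nor the normalisation of total area to $\pi$ enters this local argument, so the expansion applies verbatim to $\Theta_t$. Beyond this observation the proof is a verbatim repeat of that of Corollary \ref{cor:curv.bound}, with the roles of $\inf\curv$ and $-\sup\curv$ exchanged by the sign flip in the exterior asymptotic.
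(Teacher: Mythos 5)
Your proof is correct and follows the same route the paper leaves implicit: Corollary \ref{cor:extcomparison} gives $\Psi_{\text{ext}}(\Omega_t,a)\geq\Psi(\Theta_t,a)$, and the small-$a$ asymptotics of Proposition \ref{prop:asympt.isoperim.profile} applied to the exterior profile of $\Omega_t$ and the interior profile of $\Theta_t$ yield the curvature inequality after dividing by $a$ and letting $a\to 0$. Your remark that the expansion of $\Psi(\Theta_t,\cdot)$ is legitimate despite $\Theta_t$ being non-compact and of infinite area is the right thing to flag, and the justification you give (the argument in Proposition \ref{prop:asympt.isoperim.profile} is local near a boundary point, and $\sup_{\partial\Theta_t}\curv$ is finite and attained at the vertex; alternatively, for these model domains the small-area minimizers are explicitly classified in Theorem \ref{thm:ext-model-isoperim} and concentrate at the vertex) is sound.
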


Now we apply this for a particular choice of model region to deduce the required lower curvature bound:

\begin{theorem}\label{thm:lower-curv-bound}
For any compact embedded solution of \eqref{eq:NCSF} there exists $C$ such that 
$\curv(x,t)\geq -C\E^{-t}$ for $t>0$.
\end{theorem}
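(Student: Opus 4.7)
The plan is to apply Corollaries \ref{cor:extcomparison} and \ref{cor:lower-curv-bound} with a non-compact model $\Theta$ built from the translating grim reaper of unnormalized curve shortening flow. Let $\tilde X(y,\tau)=(\tau+\log\cos y,y)$ for $|y|<\pi/2$ parametrize the standard grim reaper, and define $X(y,t)=\E^t\tilde X(y,\psi(t))$ with $\psi(t)=c-\tfrac{1}{2}\E^{-2t}$ for some $c\in\RR$. Using $\dot\psi=\E^{-2t}$ and the grim-reaper equation $\partial_\tau\tilde X=-\tilde\curv\nor$, a direct computation verifies $\partial_t X = X - \curv\nor$, so $X$ solves \eqref{eq:NCSF}. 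The region $\Theta_t$ bounded by $X(\cdot,t)$ is non-compact, convex, symmetric in the $x$-axis, has exactly one vertex on the axis (all inherited from the grim reaper), and its curvature scales as $\curv=\tilde\curv/\E^t$, giving $\max_{\partial\Theta_t}\curv=\E^{-t}$. Since \eqref{eq:NCSF} is autonomous, for any $t_0\in\RR$ the time-translate $\{\Theta_{t+t_0}\}_{t\geq 0}$ is also a solution of \eqref{eq:NCSF} with initial data $\Theta_{t_0}$.

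The key step is the initial isoperimetric comparison: for $t_0$ sufficiently negative, $\Psi_{\text{ext}}(\Omega_0,a)>\Psi(\Theta_{t_0},a)$ for every $a>0$. The scaling identity $\Psi(\Theta_{t_0},a)=\E^{t_0}\Psi(\mathfrak G,a\E^{-2t_0})$ (where $\mathfrak G$ is the standard grim reaper) together with the convexity bound $\Psi(\mathfrak G,b)\leq\sqrt{2\pi b}$ (from the concavity of $\Psi^2$) reduce the task to estimating the gap $g(b):=\sqrt{2\pi b}-\Psi(\mathfrak G,b)$, which is strictly positive for $b>0$ with $g(b)\sim\tfrac{4}{3\pi}b$ as $b\to 0^+$ and $g(b)\to\infty$ as $b\to\infty$. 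The argument mirrors the paperclip comparison of Section 6 and covers three regimes: for $a\ll\E^{2t_0}$, the small-$b$ expansion combined with Proposition \ref{prop:asympt.isoperim.profile} yields the comparison provided $\E^{-t_0}>-\inf_{\partial\Omega_0}\curv$; for $a\gg\E^{2t_0}$, the bound $\Psi(\Theta_{t_0},a)\leq\pi\E^{t_0}$ combined with positivity of $\Psi_{\text{ext}}(\Omega_0,\cdot)$ on compact subsets and the asymptotic $\sqrt{4\pi a}$ of Lemma \ref{lem:ext-profile-asymp} handles it; and for the transitional range $a\sim\E^{2t_0}$, the uniform positivity of $g(b)$ on any fixed interval of $b$ dominates the $O(\E^{t_0})$-sized correction coming from $\inf_{\partial\Omega_0}\curv$ after the rescaling.

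Once the initial comparison is established, Corollary \ref{cor:extcomparison} yields $\Psi_{\text{ext}}(\Omega_t,a)\geq\Psi(\Theta_{t+t_0},a)$ for all $a>0$ and $t\in[0,T)$, and Corollary \ref{cor:lower-curv-bound} then delivers $\min_{\partial\Omega_t}\curv\geq-\max_{\partial\Theta_{t+t_0}}\curv=-\E^{-(t+t_0)}$. Setting $C=\E^{-t_0}$, a constant depending only on $\Omega_0$, gives the claimed bound. The main obstacle is the uniform initial isoperimetric comparison across all $a>0$; in particular, the transitional regime $a\sim\E^{2t_0}$ must be handled by matching the rescaled grim-reaper profile to the first-order expansion of $\Psi_{\text{ext}}(\Omega_0,\cdot)$, analogous to the role of the grim-reaper asymptotic in the upper-bound argument of Section 6.
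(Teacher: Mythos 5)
Your approach is correct in outline, but it is \emph{not} the route the paper takes, and the paper itself flags your approach in its concluding Remark as an alternative. The paper's proof uses the homothetically \emph{expanding} self-similar solution (the convex curve with $\curv=-C^{-1}\langle X,\nu\rangle$), rescaled so that $\Theta_t=r(t)\Theta$ with $r(t)=\sqrt{(\E^{2t}-1)/C}$. The crucial advantage of that choice is that as $t\to 0^+$ the region $\Theta_t$ degenerates to a \emph{wedge} of opening angle $2\theta_0$, whose isoperimetric profile is exactly $\sqrt{4\theta_0 a}$; since $\Psi_{\mathrm{ext}}(\Omega_0,\cdot)/\sqrt{a}$ is bounded below, taking $\theta_0$ small gives $\Psi_{\mathrm{ext}}(\Omega_0,a)>\Psi(\Theta_\delta,a)$ for all $a>0$ in one line, with no regime-matching. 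The resulting bound is $\curv\geq-1/\sqrt{C(\E^{2t}-1)}$, from which the theorem's form follows for all $t>0$. The paper's Remark notes precisely your alternative: comparison with $\E^{t-t_0}\mathfrak G$.

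A few concrete points on your version. First, a sign error: with $\tilde X(y,\tau)=(\tau+\log\cos y,y)$ and the region $\{x\leq\tau+\log\cos y\}$, the outward normal is $(\cos y,\sin y)$ and $\partial_\tau\tilde X\cdot\nor=\cos y=+\tilde\curv$, not $-\tilde\curv$; you need $\psi(t)=c+\tfrac12\E^{-2t}$ (or equivalently reverse the sign of $\tau$) for $X(y,t)=\E^t\tilde X(y,\psi(t))$ to solve \eqref{eq:NCSF}. Second, and more substantively, your ``three regimes'' sketch of the initial comparison is where all the real work is, and you should be aware it is genuinely delicate. Writing $\Psi_{\mathrm{ext}}(\Omega_0,a)-\Psi(\Theta_{t_0},a)=[\Psi_{\mathrm{ext}}(\Omega_0,a)-\sqrt{2\pi a}]+\E^{t_0}g(a\E^{-2t_0})$ with $g(b)=\sqrt{2\pi b}-\Psi(\mathfrak G,b)$, one must use the full strength $g(b)\geq\max\{c_0 b,\sqrt{2\pi b}-\pi\}$ (not merely $g(b)\geq c_0\min(b,1)$, which is not strong enough to dominate the $O(a)$ error from Proposition~\ref{prop:asympt.isoperim.profile} on the range $\E^{2t_0}\lesssim a\lesssim a_0$) together with a sufficiently small choice of the cutoff $a_0$ so that $\sqrt{2\pi}>C\sqrt{a_0}$ absorbs the error term uniformly in $b\leq a_0\E^{-2t_0}$. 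This is the matching argument that the wedge-asymptotic expander avoids entirely, because $\sqrt{4\theta_0 a}$ is a scale-invariant lower barrier. The trade-off is that your grim-reaper comparison directly yields $\curv\geq-\E^{-(t+t_0)}$ for all $t\geq 0$ (but $t_0$ depends on $\inf_{\partial\Omega_0}\curv$), while the expander bound is independent of the initial curvature but blows up as $t\to0^+$, so that the statement for small $t$ is supplied by smoothness of the flow.
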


\begin{proof}
We choose as a comparison region a solution of \eqref{eq:NCSF} arising from a homothetically expanding solution of curve shortening flow (see \cite{EHentire}*{Theorem 5.1} or  \cite{Ishimura}) which we can construct as follows:  Define
$h:\ (-\theta_0,\theta_0)\to\RR$ implicitly by
$$
\theta = \int_{h(\theta)}^1\frac{dz}{\sqrt{1-z^2-C\log z}},
$$
where $\theta_0\in(0,\pi/2)$ is determined by $C>0$.  $\theta_0$ is strictly monotone in $C$ and approaches $0$ as $C\to \infty$ and approaches $\pi/2$ as $C\to 0$.  The curve given by the image
of the map $X$ in Equation \eqref{eq:X} on the interval $(-\theta_0,\theta_0)$ is then a complete convex curve asymptotic to the lines of angle $\pm\theta_0$ with a single critical point of curvature at $\theta=0$, at which point the curvature takes its maximum value of $1/C$.  At every point of the curve the equation $\curv=-C^{-1}\langle X,\nu\rangle$ holds.  Let $\Theta$ be the non-compact convex region enclosed by this curve.  Then the regions $\tilde\Theta_\tau=\sqrt{\frac{2\tau}{C}}\Theta$ satisfy the curve-shortening flow, and the rescaled regions $\Theta_t=r(t)\Theta$ satisfy the normalized curve-shortening flow \eqref{eq:NCSF}, where $r(t)=\sqrt{\frac{\E^{2t}-1}{C}}$ for $t>0$.

As $t=0$ the region $\Theta_t$ converges to the wedge of angle $2\theta_0$, so the isoperimetric profile is exactly $\sqrt{4\theta_0a}$ for $a>0$.  In particular for any smooth simply compact region $\Omega_0$, for sufficiently small $\theta_0$ we have
$\Psi_{\text{ext}}(\Omega_0,a)>\Psi(\Theta_0,a)$ for every $a$, and by continuity we also have
$\Psi_{\text{ext}}(\Omega_0,a)>\Psi(\Theta_\delta,a)$ for all $a$ for small $\delta>0$.  Corollary \ref{cor:extcomparison} gives $\curv\geq -1/(Cr(t))=-\frac{1}{\sqrt{C(\E^{2t}-1)}}$. 
\end{proof}

\begin{remark}
One could also apply the comparison theorem with $\Theta_t=\E^{t-t_0}{\mathfrak G}$ for sufficiently large $t_0$, where ${\mathfrak G}$ is the convex region enclosed by the grim reaper curve.  This gives the lower bound $\curv\geq -C\E^{-t}$ for some $C$.  The comparison used above is interesting because it implies curvature bounds for positive times, independent of any initial curvature bound, provided the initial exterior isoperimetric profile is bounded below by $C\sqrt{a}$ for some $C$, and the initial isoperimetric profile is bounded below by $C\min\{\sqrt{a},\sqrt{\pi-a}\}$.  
\end{remark}

\section{Proof of Grayson's theorem}

We have established upper and lower bounds on curvature for any compact simply connected region with boundary evolving by the normalized curve-shortening flow, with the upper curvature bound exponentially decaying to $1$ as $t\to\infty$.  The argument in \cite{AB1}*{Sections 3--4} applies, proving Grayson's theorem.

\begin{bibdiv}
\begin{biblist}

\bib{AB1}{article}{
	author={Andrews, Ben},
	author={Bryan, Paul},
	title={Curvature bound for curve shortening flow via distance comparison and a direct proof of Grayson's theorem},
	journal={J. Reine Angew. Math.},
	date={2010},
	eprint={arXiv:0908.2682v1 [math.DG]}
}

\bib{AB2}{article}{
	author={Andrews, Ben},
	author={Bryan, Paul},
	title={Curvature bounds by isoperimetric comparison for normalized Ricci flow on the two-sphere},
	journal={Calc. Var. PDE},
	date={2010},
	eprint={arXiv:0908.3606v2 [math.DG]}
}	

\bib{Ang.zero.count}{article}{
   author={Angenent, Sigurd},
   title={The zero set of a solution of a parabolic equation},
   journal={J. Reine Angew. Math.},
   volume={390},
   date={1988},
   pages={79--96},
  }

\bib{EHentire}{article}{
   author={Ecker, Klaus},
   author={Huisken, Gerhard},
   title={Mean curvature evolution of entire graphs},
   journal={Ann. of Math. (2)},
   volume={130},
   date={1989},
   number={3},
   pages={453--471},
  }

\bib{Ga1}{article}{
   author={Gage, Michael E.},
   title={An isoperimetric inequality with applications to curve shortening},
   journal={Duke Math. J.},
   volume={50},
   date={1983},
   number={4},
   pages={1225--1229},
   }

\bib{Ga2}{article}{
   author={Gage, M. E.},
   title={Curve shortening makes convex curves circular},
   journal={Invent. Math.},
   volume={76},
   date={1984},
   number={2},
   pages={357--364},
  }

\bib{GH}{article}{
   author={Gage, M.},
   author={Hamilton, R. S.},
   title={The heat equation shrinking convex plane curves},
   journal={J. Differential Geom.},
   volume={23},
   date={1986},
   number={1},
   pages={69--96},
  }

\bib{Gr}{article}{
   author={Grayson, Matthew A.},
   title={The heat equation shrinks embedded plane curves to round points},
   journal={J. Differential Geom.},
   volume={26},
   date={1987},
   number={2},
   pages={285--314},
  }
  
\bib{HamCSF}{article}{
   author={Hamilton, Richard S.},
   title={Isoperimetric estimates for the curve shrinking flow in the plane},
   conference={
      title={Modern methods in complex analysis},
      address={Princeton, NJ},
      date={1992},
   },
   book={
      series={Ann. of Math. Stud.},
      volume={137},
      publisher={Princeton Univ. Press},
      place={Princeton, NJ},
   },
   date={1995},
   pages={201--222},
   }

\bib{HuCSF}{article}{
   author={Huisken, Gerhard},
   title={A distance comparison principle for evolving curves},
   journal={Asian J. Math.},
   volume={2},
   date={1998},
   number={1},
   pages={127--133},
  }

\bib{Ishimura}{article}{
   author={Ishimura, Naoyuki},
   title={Curvature evolution of plane curves with prescribed opening angle},
   journal={Bull. Austral. Math. Soc.},
   volume={52},
   date={1995},
   number={2},
   pages={287--296},
  }

\bib{Kuwert}{article}{
   author={Kuwert, Ernst},
   title={Note on the isoperimetric profile of a convex body},
   conference={
      title={Geometric analysis and nonlinear partial differential
      equations},
   },
   book={
      publisher={Springer},
      place={Berlin},
   },
   date={2003},
   pages={195--200},
   }

\bib{Ritore}{article}{
   author={Ritor{\'e}, Manuel},
   title={Constant geodesic curvature curves and isoperimetric domains in
   rotationally symmetric surfaces},
   journal={Comm. Anal. Geom.},
   volume={9},
   date={2001},
   number={5},
   pages={1093--1138},
  }
  
\bib{SZ}{article}{
   author={Sternberg, Peter},
   author={Zumbrun, Kevin},
   title={On the connectivity of boundaries of sets minimizing perimeter
   subject to a volume constraint},
   journal={Comm. Anal. Geom.},
   volume={7},
   date={1999},
   number={1},
   pages={199--220},
  }

\end{biblist}
\end{bibdiv}
\end{document}